\theoremstyle{definition}
\newtheorem{theorem}{Theorem}[section]
\newtheorem{prop}[theorem]{Proposition}
\newtheorem{lemma}[theorem]{Lemma}
\newtheorem{definition}[theorem]{Definition}
\newtheorem{remark}[theorem]{Remark}
\newtheorem*{note}{Notation}
\numberwithin{theorem}{section}
\newcommand\Ker{\operatorname{Ker}}
\newcommand\Image{\operatorname{Im}}
\newcommand\Aut{\operatorname{Aut}}
\newcommand\GL{\operatorname{GL}}
\newcommand\SL{\operatorname{SL}}
\newcommand\PGL{\operatorname{PGL}}
\newcommand\PSL{\operatorname{PSL}}
\newcommand\PBD{\operatorname{PBD}}
\newcommand\Sing{\operatorname{Sing}}
\title{Projective plane curves whose automorphism groups are simple and primitive}
\author{Yusuke Yoshida}
\date{}
\begin{document}

\maketitle

\begin{abstract}
We study complex projective plane curves with a given group of automorphisms. Let $G$ be a simple primitive subgroup of $\PGL(3, \mathbb{C})$, which is isomorphic to $\mathfrak{A}_{6}, \mathfrak{A}_{5}$ or $\PSL(2, \mathbb{F}_{7})$. We obtain a necessary and sufficient condition on $d$ for the existence of a nonsingular projective plane curve of degree $d$ invariant under $G$. We also study an analogous problem on integral curves.
\end{abstract}

\section{Introduction}

Automorphism groups of algebraic curves have long been studied. For example, Hurwitz found an upper bound on the order of the automorphism group of a curve of a given genus (\cite{Hurwitz}). In the case of plane curves, there are more detailed studies on automorphism groups. One important fact here is that an automorphism of a smooth projective plane curve of degree greater than or equal to $4$ uniquely extends to an automorphism of the projective plane. Hence, the automorphism group of such a curve is isomorphic to a subgroup of $\PGL(3, \mathbb{C})$.

Recently, Harui obtained the following result concerning the classification of automorphism groups of smooth plane curves.

\begin{theorem}\label{thm:Harui}
[Harui, Theorem 2.3]
Let $C$ be a smooth plane curve of degree $d \geq 4$, and $G$ a subgroup of $\Aut C$. Then one of the following holds:
\begin{enumerate}
\item[(a-i)] $G$ fixes a point on $C$, and $G$ is cyclic.
\item[(a-ii)] $G$ fixes a point not lying on $C$, and up to conjugation there is a commutative diagram
\[
\xymatrix@M=8pt{
1 \ar[r] & \mathbb{C}^{\times} \ar[r] \ar@{}[d]|\bigcup & \PBD(2, 1) \ar[r]^-\pi \ar@{}[d]|\bigcup & \PGL(2, \mathbb{C}) \ar[r] \ar@{}[d]|\bigcup & 1 \\
1 \ar[r] & N \ar[r] & G \ar[r] & G' \ar[r] & 1
}
\]
with exact rows where the subgroup $\PBD(2, 1)$ of $\PGL(3, \mathbb{C})$ is defined by
\[
\PBD(2, 1) := \left\{ \left[ 
\begin{array}{cc}
A & O \\
O & \alpha \\
\end{array}
\right]
\in \PGL(3, \mathbb{C}) \middle| A \in \GL(2, \mathbb{C}), \alpha \in \mathbb{C}^{\times} \right\},
\]
$N$ is cyclic and $G'$ is isomorphic to a cyclic group $\mathbb{Z}/m\mathbb{Z}$, a dihedral group $\mathcal{D}_{2m}$, the tetrahedral group $\mathfrak{A}_{4}$, the octahedral group $\mathfrak{S}_{4}$ or the icosahedral group $\mathfrak{A}_{5}$.
\item[(b-i)] $(C, G)$ is a \textit{descendant} of the Fermat curve $F_{d}$ defined by $x^{d} + y^{d} + z^{d} = 0$. (For the definition of a descendant, see [Harui, Definition 2.2].)

In particular, $G$ is conjugate to a subgroup of $\Aut F_{d}$.
\item[(b-ii)] $(C, G)$ is a descendant of the Klein curve $K_{d}$ defined by $x^{d-1}y + y^{d-1}z + z^{d-1}x = 0$.

In particular, $G$ is conjugate to a subgroup of $\Aut K_{d}$.
\item[(c)] $G$ is a finite \textit{primitive} subgroup of $\PGL(3, \mathbb{C})$. (For the definition of a primitive subgroup of $\PGL(3, \mathbb{C})$ and more details, see Definition \ref{def of primitive}.)

In this case, $G$ is conjugate to one of the following groups:
\begin{itemize}
\item The Valentiner group $\mathcal{V}$, which is isomorphic to $\mathfrak{A}_{6}$.
\item The icosahedral group $\mathcal{I}$, which is isomorphic to $\mathfrak{A}_{5}$.
\item The Klein group $\mathcal{K}$, which is isomorphic to $\PSL(2, \mathbb{F}_{7})$.
\item The Hessian group $H_{216}$ of order $216$ or its subgroup of order $36$ or $72$.
\end{itemize}
\end{enumerate}
\end{theorem}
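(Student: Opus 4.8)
The plan is to deduce the statement from the classical classification of finite subgroups of $\PGL(3,\mathbb{C})$ and then handle each of its three types. By the facts recalled in the introduction, since $d\ge 4$ the curve $C$ has genus $g=(d-1)(d-2)/2\ge 3$, so $\Aut C$ is finite and is realised inside $\PGL(3,\mathbb{C})$; hence $G$ is a finite subgroup of $\PGL(3,\mathbb{C})$. Lifting to $\GL(3,\mathbb{C})$ and using complete reducibility of representations of finite groups, exactly one of the following holds: \textbf{(A)} the natural $3$-dimensional representation is reducible, equivalently $G$ fixes a point $P$ and a line $L$ of $\mathbb{P}^2$ with $P\notin L$; \textbf{(B)} the representation is irreducible but imprimitive, i.e.\ $G$ preserves a triangle (a set of three non-concurrent lines, equivalently the set of their three vertices); \textbf{(C)} the representation is primitive. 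Case (C) is immediate: by the classical classification of finite primitive subgroups of $\PGL(3,\mathbb{C})$ (Blichfeldt, Mitchell), up to conjugacy $G$ is the Valentiner group $\cong\mathfrak{A}_6$, the icosahedral group $\cong\mathfrak{A}_5$, the Klein group $\cong\PSL(2,\mathbb{F}_7)$, or the Hessian group $H_{216}$ or one of its subgroups of order $36$ or $72$; this is (c).

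In case (A) I would choose a $G$-fixed point $P$ and a $G$-fixed line $L$ with $P\notin L$ (they exist because the representation decomposes as $1\oplus 2$ or $1\oplus 1\oplus 1$). If $P\in C$, the differential at $P$ gives a homomorphism $G\to\GL(T_PC)=\mathbb{C}^\times$; since every finite-order automorphism of $C$ fixing $P$ is linearisable near $P$ and equals the identity as soon as its derivative at $P$ is $1$, this homomorphism is injective, so $G$ is cyclic, which is (a-i). If $P\notin C$, choose coordinates with $P=[0:0:1]$ and $L=\{z=0\}$; then $G\subseteq\PBD(2,1)$, and restricting the tautological exact sequence $1\to\mathbb{C}^\times\to\PBD(2,1)\xrightarrow{\pi}\PGL(2,\mathbb{C})\to 1$ to $G$ produces the commutative diagram of (a-ii). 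Here $N=G\cap\mathbb{C}^\times$ is a finite subgroup of $\mathbb{C}^\times$, hence cyclic, and $G'=\pi(G)$ is a finite subgroup of $\PGL(2,\mathbb{C})$, hence (Klein's classification) cyclic, dihedral, $\mathfrak{A}_4$, $\mathfrak{S}_4$ or $\mathfrak{A}_5$.

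Case (B) is the core of the argument. I would normalise the invariant triangle to the coordinate triangle $\{xyz=0\}$ with vertices $[1:0:0],[0:1:0],[0:0:1]$, let $T\trianglelefteq G$ be the subgroup fixing each vertex (a finite diagonal group), and set $\bar G=G/T\hookrightarrow\mathfrak{S}_3$; reducibility of $G$ otherwise shows that in particular $\bar G$ contains a $3$-cycle. Writing $F$ for the defining form of $C$, semi-invariance under $T$ means that all monomials occurring in $F$ share a common $T$-weight, so the support of $F$ lies in a single coset of a sublattice of $\mathbb{Z}^3$. I would then feed in smoothness of $C$: vanishing or non-vanishing of the coefficients of $x^d,y^d,z^d$ decides which vertices lie on $C$; smoothness at a vertex on $C$ forces one of the adjacent ``edge'' monomials $x^{d-1}y,\ldots$ to occur; smoothness along the edges further constrains the support; and the surviving coefficients are pinned down up to the $G$-action. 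Carrying this analysis to its end shows that $F$ has exactly the monomial shape of a descendant of the Fermat curve $F_d$ (roughly, when no vertex of the triangle lies on $C$) or of the Klein curve $K_d$ (roughly, when the vertices lie on $C$ with tangents along the edges, in the cyclic pattern of $K_d$), matching [Harui, Definition~2.2]; since a descendant of $C_0$ has $G\subseteq\Aut C_0$ by definition, the ``in particular'' clauses follow, and we obtain (b-i) or (b-ii). The hard part will be exactly this last step — showing that the $T$-weight condition together with smoothness leaves no possibilities beyond the two descendant families — and it absorbs essentially all of the work; the rest is bookkeeping with the extension $1\to T\to G\to\bar G\to 1$ and with $\Aut F_d$ and $\Aut K_d$.
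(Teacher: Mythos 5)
First, a remark on the comparison you were asked to be measured against: the paper does not prove this statement at all — it is quoted verbatim as Harui's Theorem 2.3 (and the primitive case ultimately rests on the classical Blichfeldt--Mitchell classification). So the only question is whether your outline would itself constitute a proof. Its skeleton — lift $G$ to a finite linear group, split into reducible, irreducible-imprimitive, and primitive cases, quote Klein's classification of finite subgroups of $\PGL(2,\mathbb{C})$ for (a-ii) and the classification of primitive subgroups of $\PGL(3,\mathbb{C})$ for (c) — is indeed the standard strategy, and your treatments of case (A) (injectivity of the tangent representation at a fixed point on $C$, giving cyclicity; the $\PBD(2,1)$ diagram when the fixed point is off $C$) and of case (C) are essentially complete modulo the cited classifications.

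The genuine gap is case (B), and you acknowledge it yourself: the assertion that for an irreducible imprimitive $G$ the pair $(C,G)$ must be a descendant of $F_d$ or of $K_d$ is precisely the new content of Harui's theorem, and your proposal replaces it with a plan (``feed in smoothness \ldots carrying this analysis to its end shows \ldots''). Nothing in the sketch establishes the claimed dichotomy. Concretely, one must (i) show that irreducibility forces the diagonal subgroup $T$ to contain non-scalar elements, so that the common $T$-weight condition on the support of $F$ has real content; (ii) analyze all configurations of which vertices of the invariant triangle lie on $C$ and how $C$ meets the edges, using smoothness to force the presence of $x^d,y^d,z^d$ or of the edge monomials $x^{d-1}y$, etc., and rule out every support pattern other than the Fermat-type and Klein-type ones (partial configurations, e.g.\ one vertex on $C$ and two off, must be excluded or absorbed); and (iii) verify not merely that $F$ has the right monomial shape but that the group $G$ itself is (conjugate into) $\Aut F_d$ or $\Aut K_d$, as Harui's notion of descendant requires — which in turn uses the known structure of $\Aut F_d$ and $\Aut K_d$ and the extension $1\to T\to G\to \bar G\to 1$ with $\bar G$ containing a $3$-cycle. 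These steps are where the actual difficulty of the theorem lives, so as written the proposal is an accurate road map of Harui's argument rather than a proof of the statement.
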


There are also more concrete studies of the automorphism groups. For example, the list of all automorphism groups of projective plane curves of degree $5$ was given by E. Badr and F. Bars (\cite{BB}). Beyond that, it seems that we still do not have the list of the automorphism groups of projective plane curves of a given degree.

We look at the problem from a different point of view. Fix a group $G$ in the classification, consider nonsingular (resp. integral) projective plane curves $C$ invariant under $G$. Then we want to find all possible values of the degree of such a curve $C$.

For a group $G$ of type (a-i) or (a-ii) in the classification, invariant curves under $G$ are related with the study of Galois points and are actively studied. (See e.g. \cite{HMO}.)

For a pair $(C, G)$ as in (b-i) or (b-ii) in the classification, $C$ is defined by 
\[
x^{d} + y^{d} + z^{d} + \sum\limits_{i + j + k = d, \max\{ i, j, k\} < d} c_{i j k} x^{i} y^{j} z^{k} = 0
\]
or
\[
x^{d-1}y + y^{d-1}z + z^{d-1}x + \sum\limits_{i + j + k = d, \max\{ i, j, k\} < d-1} c_{i j k} x^{i} y^{j} z^{k} = 0.
\]
and $G$ acts on $C$. Thus it can be said that we have a good understanding of these cases.

In this paper, we focus on groups of type (c) in the classification and study invariant curves under each of them except for subgroups of $H_{216}$ and its sub groups. As the first main result, we determine all degrees of nonsingular projective plane curves invariant under each of the groups $\mathcal{V} \cong \mathfrak{A}_{6}$, $\mathcal{I} \cong \mathfrak{A}_{5}$ and $\mathcal{K} \cong \PSL(2, \mathbb{F}_{7})$:
\begin{theorem}
Let $d$ be a positive integer.
\begin{enumerate}
\item[(1)] There exists a nonsingular projective plane curve of degree $d$ whose automorphism group is equal to $\mathcal{V}$ if and only if $d \equiv 0, 6$ or $12 \mod 30$.
\item[(2)] There exists a nonsingular projective plane curve of degree $d$ invariant under $\mathcal{I}$ if and only if $d \equiv 0, 2$ or $6 \mod 10$.
\item[(3)] There exists a nonsingular projective plane curve of degree $d$ whose automorphism group is equal to $\mathcal{K}$ if and only if $d \equiv 0, 4$ or $6 \mod 14$.
\end{enumerate}
\end{theorem}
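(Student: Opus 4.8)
The plan is to attack each of the three cases via invariant theory: for a finite group $G \subset \PGL(3,\mathbb{C})$ we lift to $\tilde G \subset \GL(3,\mathbb{C})$ (or $\SL(3,\mathbb{C})$) and study the graded ring of invariants $R = \mathbb{C}[x,y,z]^{\tilde G}$. A plane curve of degree $d$ is $G$-invariant exactly when its defining form lies (up to the relevant character twist) in the degree-$d$ piece $R_d$ — or more precisely in the space of relative invariants of each degree, since $\tilde G$ may have nontrivial linear characters even when $G$ is simple. For $\mathcal V \cong \mathfrak A_6$, $\mathcal I \cong \mathfrak A_5$, and $\mathcal K \cong \PSL(2,\mathbb F_7)$ these invariant rings are classical and completely known: each is generated by a small number of fundamental invariants (for the Klein group, the famous quartic $C_4$, and forms of degrees $6, 14, 21$; for the icosahedral ternary group, invariants in degrees $2, 6, 10, 15$; for the Valentiner group, invariants in degrees $6, 12, 30$ and a further one). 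The first step is therefore to record these generators and their degrees, determine which degrees $d$ admit a nonzero invariant form at all, and then — crucially — identify the congruence classes of $d$ for which the generic such form is \emph{nonsingular}.

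The second and main step is the nonsingularity/smoothness analysis. Existence of \emph{some} invariant curve of degree $d$ is necessary but not sufficient: one must show that for $d$ in the claimed residue classes there is a \emph{smooth} member, and that for all other $d$ \emph{every} $G$-invariant curve of degree $d$ is singular. The forbidden degrees will be forced singular by a fixed-point argument: $G$ has points in $\mathbb P^2$ with nontrivial stabilizer (images of eigenvectors of group elements), and if $P$ has stabilizer $G_P$ acting on the tangent/cotangent data at $P$, then every $G$-invariant form $F$ of degree $d$ is constrained at $P$ — the Taylor coefficients of $F$ at $P$ transform under $G_P$, so for certain $d$ the linear (and sometimes the constant) term is forced to vanish, making $C = \{F=0\}$ pass through $P$ singularly or simply forcing $\partial F$ to vanish there. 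Running this over the $G$-orbits of special points (whose sizes and stabilizers come from the classical geometry: $36$, $45$, $72$ for the Hessian-type configurations; $6$, $15$, $28$ for the Klein quartic's special points; etc.) pins down exactly which residue classes are obstructed. For the surviving classes, smoothness of a generic member is proved either by exhibiting an explicit smooth curve (often a power of a low-degree invariant perturbed, or a product-plus-invariant combination whose Jacobian one checks directly) or by a Bertini-type argument on the linear system $|R_d|$ after confirming its base locus is empty or consists only of points where a generic member is smooth.

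The third step is the sharpening from "invariant under $G$" to "automorphism group \emph{equal to} $G$" in cases (1) and (3). This requires that a generic $G$-invariant smooth curve of degree $d$ has no extra automorphisms. By Theorem~\ref{thm:Harui}, $\Aut C$ is again one of the listed types, and since it contains the primitive simple group $G$ it must itself be primitive (types (a),(b) cannot contain $G$), hence lies on the short list in (c); one then checks that the only primitive group properly containing $\mathcal V$ or $\mathcal K$ up to conjugacy would force the curve into a smaller-dimensional subsystem, contradicting genericity — or simply that $\mathcal V$ and $\mathcal K$ are maximal among the relevant finite subgroups of $\PGL(3,\mathbb C)$, so $\Aut C = G$ automatically once $C$ is smooth and $G$-invariant. (For $\mathcal I$ the statement is only "invariant under", which is why (2) is phrased more weakly — $\mathcal I$ sits inside $\mathcal V$, so a generic $\mathcal I$-invariant curve need not have automorphism group exactly $\mathcal I$.)

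**The hard part will be** the smoothness dichotomy: showing simultaneously that generic invariant curves in the good residue classes are smooth (which may need careful base-locus analysis of each linear system $|R_d|$, since the base points are precisely the special orbits and one must verify a generic member is smooth there) and that \emph{every} invariant curve in a bad residue class is singular (a uniform statement over an entire linear system, requiring that the local obstruction at some special orbit point is unavoidable). Bookkeeping the three invariant rings, their relative-invariant structure under the non-$\SL$ lifts, and the stabilizer representations at each type of special point — and doing this consistently for all three groups — is where the real work lies.
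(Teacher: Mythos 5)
Your overall route --- classical invariant rings for $\mathcal V$, $\mathcal I$, $\mathcal K$, a base-locus/Bertini analysis of the linear systems of invariant curves, and maximality of $\mathcal V$ and $\mathcal K$ among finite primitive subgroups to get $\Aut C = G$ (cf.\ Remark \ref{rem of primitive}) --- is the same as the paper's, but as written the proposal has a genuine gap at the heart of the theorem: the specific residue classes are never derived, and your proposed mechanism for obstructing the bad degrees (the stabilizer of a special point acting on Taylor coefficients, forcing the linear term of every invariant form to vanish "for certain $d$") is an assertion, not an argument --- you never connect the local weights at a special point to $d$ modulo anything. The paper gets this arithmetically: every invariant of even degree $d$ is a combination of monomials $F_G^i\Phi_G^j\Psi_G^k$ with $ai+bj+ck=d$ $(a<b<c$ the fundamental degrees$)$; the congruence $d\equiv 0, a$ or $b \mod c$ is exactly the condition that some monomial has $i+j\le 1$ (Lemma \ref{intersection base pts condition}), and when it fails every monomial lies in the square of the ideal of the finite orbit $V(F_G)\cap V(\Phi_G)$, so \emph{every} invariant curve of that degree is singular there (Proposition \ref{condition of nonsingularity}). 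Moreover the full equivalence needs the companion conditions you do not mention at all: the representability conditions $d=bj+ck$, $d=ai+ck$, $d=ai+bj$ (whose failure forces every element to be divisible by a fundamental invariant, Lemma \ref{reducible condition}) and the congruences mod $a$ and mod $b$; these hold in the stated residue classes only after a short verification the paper supplies for each group, and without them the "if" direction of your dichotomy is incomplete.

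Second, your treatment of smoothness in the good classes is not reduced to checkable facts. Bertini confines singularities to the base locus, but to conclude nonsingularity at a base point $P\in V(F_m)\cap V(F_n)$ one needs the concrete inputs of Lemma \ref{basic prop of invariants}: $V(F_G)$, $V(\Phi_G)$, $V(\Psi_G)$ are nonsingular, $V(F_G)$ and $V(\Phi_G)$ meet transversally, and the three have empty common intersection (verified in the paper by computer algebra and the classical references); with these, the explicit member $F_l^{u}F_p$ of the system has nonvanishing differential at $P$. Your suggested "perturbed power of a low-degree invariant" is either nonreduced or requires exactly this base-point analysis, so it does not shortcut anything. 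Two smaller points: for these simple groups one can choose the lift so that invariant curves are cut out by honest invariants rather than semi-invariants (Lemma \ref{semi-invariant prop}); if you insist on relative invariants you must rule out extra character-twisted forms enlarging your linear systems. And the uniform criterion only applies for $d\ge c$, so the finitely many low degrees in bad classes ($d=18,24$ for $\mathcal V$; $d=4,8$ for $\mathcal I$; $d=2,8,10,12$ for $\mathcal K$) need the separate explicit checks done in Lemmas \ref{lowdeg nonsingularity of V}, \ref{reducible of I} and \ref{lowdeg of K}.
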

We note that if $C$ is invariant under $\mathcal{V}$ or $\mathcal{K}$ then $\Aut C$ is equal to $\mathcal{V}$ or $\mathcal{K}$, respectively.

Next, we give a result on integral (i.e. irreducible and reduced) curves invariant under each of the groups above. We again find all degrees of such curves:
\begin{theorem}
Let $d$ be a positive integer.
\begin{enumerate}
\item[(1)] There exists an integral projective plane curve of degree $d$ invariant under $\mathcal{V}$ if and only if $d$ is a multiple of $6$, $d \neq 18$ and $d \neq 24$.
\item[(2)] There exists an integral projective plane curve of degree $d$ invariant under $\mathcal{I}$ if and only if $d$ is even and is neither $4, 8$ nor $14$.
\item[(3)] There exists an integral projective plane curve of degree $d$ invariant under $\mathcal{K}$ if and only if $d$ is even and is neither $2, 8, 12, 16$ nor $22$.
\end{enumerate}
\end{theorem}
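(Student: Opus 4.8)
The plan is to translate the problem into invariant theory and then into a finite combinatorial check. Each of $\mathcal{V},\mathcal{I},\mathcal{K}$ lifts to a finite subgroup $\widetilde{G}$ of $\SL(3,\mathbb{C})$ --- the ternary Valentiner group (a central extension $3.\mathfrak{A}_6$), the ternary icosahedral group $\mathfrak{A}_5$, and $\PSL(2,\mathbb{F}_7)$ itself --- and each $\widetilde{G}$ is perfect, hence has no nontrivial linear character; so a plane curve $\{F=0\}$ of degree $d$ is invariant under $G$ if and only if $F$ lies, up to a scalar, in the degree-$d$ part $R_d$ of the invariant ring $R=\mathbb{C}[x,y,z]^{\widetilde{G}}$. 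I would then recall the classical descriptions of these rings: in each case $R=\mathbb{C}[\theta_1,\theta_2,\theta_3,J]$ with $(\deg\theta_1,\deg\theta_2,\deg\theta_3)$ equal to $(6,12,30)$, $(2,6,10)$, resp.\ $(4,6,14)$, where $J$ is a scalar multiple of the Jacobian of $\theta_1,\theta_2,\theta_3$ of degree $\deg\theta_1+\deg\theta_2+\deg\theta_3-3$ (namely $45$, $15$, $21$), subject to a single relation $J^2=P(\theta_1,\theta_2,\theta_3)$; in particular $\theta_1,\theta_2,\theta_3$ are a homogeneous system of parameters and $R$ is free of rank two over $\mathbb{C}[\theta_1,\theta_2,\theta_3]$ with basis $\{1,J\}$. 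Two geometric inputs will carry the argument: $\theta_1=0$ is a smooth, hence integral, curve in each case (the Wiman sextic, a smooth conic, the Klein quartic), while $J=0$ is a union of $\deg J$ pairwise distinct lines, so it is reduced but very reducible; and the sizes of $G$-orbits of subvarieties of $\mathbb{P}^2$ are severely restricted because $\mathfrak{A}_6,\mathfrak{A}_5,\PSL(2,\mathbb{F}_7)$ have no proper subgroups of small index.

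The question is now: for which $d$ does $R_d$ contain a form with a single irreducible factor of multiplicity one? Since $J$ has odd degree and occurs only to the first power in the module basis, $R_d=\mathbb{C}[\theta_1,\theta_2,\theta_3]_d$ for the ``admissible'' parity of $d$ and $R_d=J\cdot\mathbb{C}[\theta_1,\theta_2,\theta_3]_{d-\deg J}$ otherwise, so for the wrong parity every invariant is divisible by $J$ and hence reducible --- this already pins down the residue classes asserted in the theorem. For admissible $d$ a monomial basis of $R_d$ is explicit, and I would show that every member of $R_d$ is non-integral in exactly three situations: (i) $R_d=0$; (ii) a fixed component, i.e.\ every monomial of degree $d$ is divisible by a fixed $\theta_i$ (or $R_d$ is spanned by a single $\theta_i$-power), so the curve is always reducible (or non-reduced); (iii) the linear system $|R_d|$ is composed with a pencil, i.e.\ every element of $R_d$ is a binary form of degree $\geq 2$ in two fixed forms, hence factors over $\mathbb{C}$. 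Running through the small degrees with the explicit bases confirms that (i)--(iii) account for precisely the excluded degrees, and that for all larger admissible $d$ the monomials of degree $d$ include one divisible by $\theta_3$ and one not, ruling out both (ii) and (iii).

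For the converse I would prove that for every admissible $d$ outside the exclusion set a general member of $|R_d|$ is integral. By Bertini's theorems, a general member of a linear system on $\mathbb{P}^2$ with empty fixed part is reduced, and is irreducible unless the system is composed with a pencil; so it is enough to check (a) that $\gcd R_d=1$, which I obtain by exhibiting two coprime invariants of degree $d$ --- typically a monomial in $\theta_1,\theta_2$ against one divisible by $\theta_3$ or $J$, using that $\theta_1$ is irreducible and divides neither $\theta_3$ nor $J$ --- and (b) that $|R_d|$ is not composed with a pencil: were it so, all ratios of equal-degree monomials in $R_d$ would be rational functions of a single function on $\mathbb{P}^2$, whereas for large admissible $d$ one can form from the monomials of degree $d$ two such ratios that are algebraically independent (this uses that $\theta_1,\theta_2,\theta_3$ are algebraically independent), a contradiction. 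The few small admissible degrees just above the exclusion sets where only a genuine pencil $\langle\theta_i^{k},\theta_j\rangle$ is available --- chiefly $d=12$ for $\mathcal{V}$, and the first one or two cases for $\mathcal{I}$ and $\mathcal{K}$ --- I would handle directly: the irreducible components of a reducible invariant member are permuted by $G$, so they fall into $G$-orbits of curves whose degrees sum to $d$; the orbit-size bounds together with the short list of invariant curves of low degree (the $\theta_i=0$ and the lines of $J$) leave only finitely many values of the pencil parameter giving a reducible member, so a general one is integral. The remaining sporadic admissible degrees are dispatched by exhibiting an explicit irreducible invariant.

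The main obstacle is exactly this last part, the borderline degrees, where $R_d$ is too small for the generic-member machinery to say anything by itself. On the negative side one must be certain that ``fixed component'' and ``composed with a pencil'' really are the only ways in which all members can degenerate in those degrees --- e.g.\ that the degree-$24$ system for $\mathcal{V}$ genuinely consists entirely of products; on the positive side one must prove honest irreducibility of a member of a small, non-generic linear system, and here the orbit-counting over $\mathfrak{A}_6,\mathfrak{A}_5,\PSL(2,\mathbb{F}_7)$ does the real work. Pinning the three exclusion lists down exactly --- rather than up to finitely many doubtful values --- is the delicate bookkeeping; everything else is a uniform consequence of the invariant-ring description and Bertini's theorem.
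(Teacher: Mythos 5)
Your strategy is sound, but in the positive direction it is genuinely different from the paper's. The paper never invokes the second (irreducibility) theorem of Bertini: after disposing of the degrees where a nonsingular invariant curve exists, it determines the analytic type of the singularities of a general member of $(\mathfrak{d}_{G})_{d}$ at the base orbit $V(F_{G})\cap V(\Phi_{G})$ (nodes, cusps, tacnodes, $A_{5}$, $D$-type; Lemmas \ref{lemma of sing type} and \ref{singularities for invariants}), and then kills reducibility by a B\'ezout count: if $H=H_{1}\cdots H_{n}$ with $n\geq 2$, all pairwise intersections of components lie in the single $G$-orbit $V(F_{G})\cap V(\Phi_{G})$ with one fixed local intersection multiplicity $m$, forcing $d_{1}(d-d_{1})\leq m\deg F_{G}\deg\Phi_{G}$ and a secondary inequality (Lemma \ref{irreducible components are nonsingular}) that fail for the relevant degrees. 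Your route --- char-$0$ Bertini irreducibility (no fixed component plus two algebraically independent ratios of members, i.e.\ two-dimensional image of the associated map, gives an irreducible, and by the first Bertini theorem reduced, general member) for all but the smallest degrees --- is legitimate and arguably shorter; what it does not yield is the explicit singularity data that the paper's method produces as a by-product, and the numerical verifications you still need (a member coprime to each of $F_{G},\Phi_{G},\Psi_{G}$, suitable monomials for the independent ratios) are essentially the same monomial bookkeeping the paper carries out in the proof of Lemma \ref{singularities for invariants}. The negative direction is handled the same way in both arguments (odd or wrong-congruence degrees are divisible by $X_{G}$, a union of lines; small degrees by explicit bases).

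Two points need tightening. First, your treatment of the residual pencil cases should be made precise: the clean tool is that any non-integral invariant member factors as a product of invariant forms of lower degree, because $G$ permutes its components and each orbit-union is a $G$-invariant curve, hence defined by a $\widetilde{G}$-invariant form by Lemma \ref{inv of G}; for $\mathcal{K}$ in degree $12$ this shows the only non-integral members of $\left<\Phi_{\mathcal{K}}^{2}, F_{\mathcal{K}}^{3}\right>$ are $\Phi_{\mathcal{K}}^{2}$ and $F_{\mathcal{K}}^{3}$, so a general member is integral --- ``orbit-size bounds'' alone are too vague to carry this step. Second, your claim that the low-degree check confirms ``precisely the excluded degrees'' does not match the list in the statement for $\mathcal{K}$: carried out honestly, it excludes $d=10$ (where $(\mathfrak{d}_{\mathcal{K}})_{10}=\left<F_{\mathcal{K}}\Phi_{\mathcal{K}}\right>$, cf.\ Lemma \ref{lowdeg of K}) and \emph{not} $d=12$, where a general member of $\left<\Phi_{\mathcal{K}}^{2}, F_{\mathcal{K}}^{3}\right>$ is integral (the paper shows it has only cusps); this agrees with the version of the theorem proved in Subsection 4.3, and the list $2,8,12,16,22$ in the introduction is a misprint for $2,8,10,16,22$, which your method would and should detect.
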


We prove these results in the following way. The groups $\mathcal{V}$, $\mathcal{I}$ and $\mathcal{K}$ have been classically well studied as reflection groups. (Some results are summarized in \cite{ST}, for example.) If $G$ is a simple primitive finite subgroup of $\PGL(3, \mathbb{C})$, which is conjugate to $\mathcal{V}$, $\mathcal{I}$ or $\mathcal{K}$, then we can take a lift $\widetilde{G}$ of $G$ in $\GL(3, \mathbb{C})$ such that a curve invariant under $G$ is defined by a polynomial invariant under $\widetilde{G}$. In addition, the invariant rings were studied for $\mathfrak{A}_{6}$ in \cite{Wiman} and for $\PSL(2, \mathbb{F}_{7})$ in \cite{Klein} around the end of the 19th century. By regarding $\mathfrak{A}_{5}$ as a subgroup of $\mathfrak{A}_{6}$, we can give the invariant ring of $\mathfrak{A}_{5}$.

For a given degree $d$, we can thus describe the linear system $(\mathfrak{d}_{G})_{d}$ of $G$-invariant curves of degree $d$ using explicitly given invariant polynomials $F_{G}$, $\Phi_{G}$ and $\Psi_{G}$. We can translate conditions on the base locus of $(\mathfrak{d}_{G})_{d}$ to congruence relations on $d$. Then we look at singularities of a general element of $(\mathfrak{d}_{G})_{d}$ with the help of Bertini's theorem and study when it is nonsingular at the base points. 

We next consider integral curves. The problem is to show that there exist integral invariant curves of degree $d$ in the theorem. In this case, it can be shown that a general element $C$ is reduced in the same way. Since any nonsingular plane curve is integral, it suffices to consider degrees $d$ for which $(\mathfrak{d}_{G})_{d}$ has only singular elements. It turns out that the singular points form a $G$-orbit, hence are of the same type. We can also give the number of singularities and the type of singularities of a general member. From this information, we see, if a general element were reducible, it would have too much singularity.

The organization of this paper is as follows. In Section 2, we recall a number of facts needed for the discussion of invariant curves. We give a description of homogeneous polynomials invariant under the group $\mathcal{V}$, $\mathcal{I}$ or $\mathcal{K}$ following \cite{Crass} and \cite{Elkies}. In Section 3, we first give a condition for the existence of a nonsingular invariant curve in a form that is valid for any of $\mathcal{V}$, $\mathcal{I}$ and $\mathcal{K}$, and then translate these conditions for each group. In Section 4, we study integral invariant curves. We look at singularities of a general invariant curve of a degree where there is no nonsingular invariant curve and prove that it cannot be reducible. In Appendix, we give some codes in SINGULAR to check a few calculations in Section 2.

\section{Simple primitive finite subgroups of $\PGL(3, \mathbb{C})$ and invariant curves}

In this section, we give a description of the groups $\mathcal{V}$, $\mathcal{I}$ and $\mathcal{K}$, their invariant rings and the invariant curves.

First, we give basic definitions related to invariant homogeneous polynomials.

\begin{note}
Let $\bm{x}$ be the coordinate $(x, y, z)$ in $\mathbb{C}^{3}$. We often identify $\bm{x}$ with the row vector $(x {\ } y {\ } z)$ or the column vector $\left(\begin{array}{c} x \\ y \\ z \end{array}\right)$.
\end{note}

\begin{definition}
\begin{enumerate}
\item[(1)] For a matrix $A \in \GL(3, \mathbb{C})$ and a homogeneous polynomial $f(x, y, z) \in \mathbb{C}[x, y, z]$, we set
\[
(f^{A})(x, y, z) := f\left( \bm{x} {}^{t}\!A \right) = f\left( A \bm{x} \right).
\]
Note that this is an action from the right.

\item[(2)] Let $G$ be a subgroup of $\GL(3, \mathbb{C})$.

We say that a homogeneous polynomial $f$ is \textit{semi-invariant} under $G$ if there exists a group homomorphism $\chi : G \rightarrow \mathbb{C}^{\times}$ such that $(f^{A})(x, y, z) = \chi(A) f(x, y, z)$ for any $A \in G$. In particular, if $\chi$ is trivial, then we say that $f$ is \textit{invariant} under $G$ (or $G$-\textit{invariant}).

\item[(3)] For an equivalence class of a matrix $[A] \in \PGL(3, \mathbb{C})$ and a point $(a : b : c) \in \mathbb{P}^{2}$, we put
\[
[A] \cdot (a : b : c) := \left[A \cdot \left(\begin{array}{c} a \\ b \\ c \end{array}\right) \right].
\]
Then we identify $\PGL(3, \mathbb{C})$ with the automorphism group $\Aut \mathbb{P}^{2}$.

\item[(4)] Let $C$ be a projective plane curve, by which we mean a nonzero effective divisor of $\mathbb{P}^{2}$.

For a subgroup $G < \PGL(3, \mathbb{C})$, we say that $C$ is \textit{invariant} under $G$ (or $G$-\textit{invariant}) if $\sigma_{*} C = C$ for any $\sigma \in G$. In particular, if $C$ is reduced, then $C$ is invariant under $G$ if $G \cdot C = C$ as a subset of $\mathbb{P}^{2}$.

\end{enumerate}
\end{definition}

\begin{note}
We write the zero set
\[
V(f) := \{ P \in \mathbb{P}^{2} \mid f(P) = 0 \}
\]
for a homogeneous polynomial $f(x, y, z) \in \mathbb{C}[x, y, z]$.
\end{note}

\begin{remark}
Let $G$ be a subgroup of $\PGL(3, \mathbb{C})$ and $\pi : \SL(3, \mathbb{C}) \rightarrow \PGL(3, \mathbb{C})$ denote the natural homomorphism. If a subgroup $\widetilde{G}$ of $\SL(3, \mathbb{C})$ satisfies $\pi(\widetilde{G}) = G$, then we call $\widetilde{G}$ a \textit{lift} of $G$.

Assume that $C$ is a projective plane curve defined by a homogeneous polynomial $f \in \mathbb{C}[x, y, z]$, i.e., $C = V(f)$. Then $C$ is invariant under $G$ if and only if $f$ is semi-invariant under $\widetilde{G}$.
\end{remark}

The groups which we are going to consider are simple finite subgroups of $\PGL(3, \mathbb{C})$. The following proposition holds.

\begin{prop}
Let $G$ be a finite subgroup of $\PGL(3, \mathbb{C})$ and $\pi : \SL(3, \mathbb{C}) \rightarrow \PGL(3, \mathbb{C})$ the natural surjective homomorphism.

If $G$ is simple and nonabelian, then either there is no nontrivial group homomorphism $\pi^{-1}(G) \rightarrow \mathbb{C}^{\times}$ or $\pi$ splits.
\end{prop}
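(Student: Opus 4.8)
Write $\widetilde{G} := \pi^{-1}(G)$, so that $\pi$ restricts to a surjection $\widetilde{G} \twoheadrightarrow G$ whose kernel is $N := \Ker \pi$, the group of scalar matrices $\omega I$ with $\omega^{3} = 1$. Thus $N$ is cyclic of order $3$ and, being central in $\SL(3, \mathbb{C})$, is central in $\widetilde{G}$; we have a central extension $1 \to N \to \widetilde{G} \xrightarrow{\pi} G \to 1$ with $\widetilde{G}$ finite (of order $3|G|$). Since any homomorphism to the abelian group $\mathbb{C}^{\times}$ factors through the abelianization, the existence of a nontrivial homomorphism $\widetilde{G} \to \mathbb{C}^{\times}$ is equivalent to $\widetilde{G}^{\mathrm{ab}} := \widetilde{G}/[\widetilde{G}, \widetilde{G}]$ being nontrivial. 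So the whole statement reduces to understanding $\widetilde{G}^{\mathrm{ab}}$.

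The key observation is that a simple nonabelian group is perfect, so $[G, G] = G$. Applying $\pi$ to the commutator subgroup gives $\pi\bigl([\widetilde{G}, \widetilde{G}]\bigr) = [G, G] = G$, hence $\widetilde{G} = [\widetilde{G}, \widetilde{G}] \cdot N$. Therefore the composite $N \hookrightarrow \widetilde{G} \twoheadrightarrow \widetilde{G}^{\mathrm{ab}}$ is surjective, and $\widetilde{G}^{\mathrm{ab}}$ is cyclic of order dividing $3$. Now split into two cases. If $\widetilde{G}^{\mathrm{ab}}$ is trivial, then every homomorphism $\widetilde{G} \to \mathbb{C}^{\times}$ is trivial, which is the first alternative. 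Otherwise $\widetilde{G}^{\mathrm{ab}} \cong \mathbb{Z}/3\mathbb{Z}$, i.e. $[\widetilde{G} : [\widetilde{G}, \widetilde{G}]] = 3$; combining this with $\widetilde{G} = [\widetilde{G}, \widetilde{G}] \cdot N$, $|N| = 3$ and the product formula $|[\widetilde{G}, \widetilde{G}] \cdot N| = |[\widetilde{G}, \widetilde{G}]| \cdot |N| / |[\widetilde{G}, \widetilde{G}] \cap N|$ forces $[\widetilde{G}, \widetilde{G}] \cap N = \{1\}$. Then $\pi$ restricts to a surjection $[\widetilde{G}, \widetilde{G}] \to G$ with trivial kernel, hence to an isomorphism $[\widetilde{G}, \widetilde{G}] \xrightarrow{\sim} G$; its inverse is a section $G \to \widetilde{G}$ of $\pi$, so the extension splits.

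I do not expect a genuine obstacle here: the argument is a short exercise on central extensions by a cyclic group of prime order $p = 3$. The only points requiring a little care are identifying $\Ker \pi$ correctly as $\mathbb{Z}/3\mathbb{Z}$ (rather than something smaller), using perfectness of $G$ in the precise form $[G,G]=G$, and checking that the commutator subgroup genuinely yields a group-theoretic section once it is shown to meet $N$ trivially. If one prefers to phrase the dichotomy without an order computation, one can instead note that the central extension is classified by $H^{2}(G, N)$ and that $\widetilde{G}^{\mathrm{ab}}$ being the nontrivial quotient of $N$ exactly says the extension class, viewed appropriately, is trivial — but the elementary counting argument above is the most self-contained.
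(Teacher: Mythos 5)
Your proof is correct. You take a slightly different, though closely parallel, route from the paper: the paper fixes an arbitrary character $\chi : \pi^{-1}(G) \rightarrow \mathbb{C}^{\times}$, restricts it to the central subgroup $\{\rho^{a}I_{3}\} \cong \mathbb{Z}/3\mathbb{Z}$, and argues case by case on that restriction --- if it is trivial, $\chi$ descends to the simple group $G$ and so is trivial; if not, $\Ker\chi$ meets the center trivially and maps onto $G$ (because $G/\pi(\Ker\chi)$ is abelian), so $\Ker\chi$ itself is the section. You instead argue intrinsically with the commutator subgroup: perfectness of $G$ gives $\widetilde{G} = [\widetilde{G},\widetilde{G}]\cdot N$, so $\widetilde{G}^{\mathrm{ab}}$ is a quotient of $N \cong \mathbb{Z}/3\mathbb{Z}$, and in the nontrivial case the index count forces $[\widetilde{G},\widetilde{G}] \cap N = 1$, making $[\widetilde{G},\widetilde{G}]$ the section. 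The two arguments use the same two ingredients (no nontrivial abelian quotient of $G$, and the prime order of the central kernel), and in fact produce the same splitting subgroup, since when $\chi$ is nontrivial its kernel contains $[\widetilde{G},\widetilde{G}]$ with the same index $3$, hence equals it. What your packaging buys is a choice-free statement --- the dichotomy is governed by $\widetilde{G}^{\mathrm{ab}}$, which you show is cyclic of order dividing $3$ --- whereas the paper's version is a character-by-character argument that avoids any counting; your observation that $\widetilde{G}$ is finite (needed so that a nontrivial abelianization really yields a nontrivial map to $\mathbb{C}^{\times}$) is correctly in place.
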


\begin{proof}
Since $G$ is simple and nonabelian, it has no nontrivial abelian character. In fact, the assertion holds under the latter condition. Let $\iota : \mathbb{Z}/3\mathbb{Z} \rightarrow \widetilde{G}$ be the map $\overline{a} \mapsto \rho^{a} I_{3}$ where $\rho = e^{\frac{2\pi i}{3}}$ and $\chi : \pi^{-1}(G) \rightarrow \mathbb{C}^{\times}$ a homomorphism.

We define a homomorphism $\varphi : \mathbb{Z}/3\mathbb{Z} \rightarrow \mathbb{C}^{\times}$ by $\varphi = \chi \circ \iota$. Then we have a commutative diagram
\[
\xymatrix
{
1 \ar[r] & \mathbb{Z}/3\mathbb{Z} \ar[r]^-\iota \ar[rd]^-\varphi & \pi^{-1}(G) \ar[r]^-\pi \ar[d]^-\chi & G \ar[r] & 1,\\
&& \mathbb{C}^{\times} &&
}
\]
where the first row is exact.

First, assume that $\varphi$ is trivial. Then $\chi$ induces a group homomorphism $\overline{\chi} : G \rightarrow \mathbb{C}^{\times}$. By assumption, $\overline{\chi}$ is trivial. Hence, $\chi$ is trivial.

On the other hand, suppose that $\varphi$ is nontrivial. This means $\Image \iota \not\subset \Ker \chi$, hence, $\Image \iota \cap \Ker \chi = 1$, and $\pi |_{\Ker \chi}$ is injective. The quotient group $G/\pi(\Ker\chi)$ is isomorphic to $\Image\chi/\Image\varphi$ which is abelian. Since $G$ has no nontrivial abelian quotient, we obtain $G = \pi(\Ker\chi)$. Therefore, $\pi$ induces an isomorphism $\Ker \chi \cong G$, and $\pi$ is split.
\end{proof}

From this proposition, we obtain the following lemma.

\begin{lemma}\label{semi-invariant prop}
Let $G$ be a simple and nonabelian finite subgroup of $\PGL(3, \mathbb{C})$. There is a lift $\widetilde{G}$ of $G$ such that any invariant curve under $G$ is defined by a homogeneous polynomial invariant under $\widetilde{G}$.
\end{lemma}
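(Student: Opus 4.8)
The plan is to use the preceding Proposition to reduce to two cases and to handle them uniformly. Start with any lift $\widetilde{G}_0 := \pi^{-1}(G) < \SL(3,\mathbb{C})$; this is a finite group with $\pi(\widetilde{G}_0) = G$, so it is a lift. By the Remark, a $G$-invariant curve $C = V(f)$ corresponds to an $f$ that is semi-invariant under $\widetilde{G}_0$, i.e. there is a character $\chi \colon \widetilde{G}_0 \to \mathbb{C}^\times$ with $f^A = \chi(A) f$ for all $A \in \widetilde{G}_0$. We do not yet know that $\chi$ is trivial; the point is to choose the lift so that it becomes trivial.

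First I would apply the Proposition to $\widetilde{G}_0 = \pi^{-1}(G)$. It yields a dichotomy. In the first case, every group homomorphism $\pi^{-1}(G) \to \mathbb{C}^\times$ is trivial; then already $\widetilde{G} := \widetilde{G}_0$ works, since the character $\chi$ attached to any semi-invariant $f$ is forced to be trivial, so $f$ is genuinely $\widetilde{G}$-invariant. In the second case $\pi$ splits over $G$, meaning there is a subgroup $\widetilde{G} < \SL(3,\mathbb{C})$ with $\pi|_{\widetilde{G}} \colon \widetilde{G} \xrightarrow{\sim} G$; equivalently $\widetilde{G}_0 = \widetilde{G} \times \langle \rho I_3\rangle$ as an internal direct product (the scalar subgroup $\langle \rho I_3 \rangle \cong \mathbb{Z}/3\mathbb{Z}$ is central and intersects $\widetilde{G}$ trivially). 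This $\widetilde{G}$ is a lift of $G$ by construction.

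It remains to check that, with this split lift $\widetilde{G}$, any $G$-invariant curve $C = V(f)$ is defined by a $\widetilde{G}$-invariant polynomial. Given such $C$, by the Remark $f$ is semi-invariant under the full preimage $\widetilde{G}_0$, hence a fortiori under the subgroup $\widetilde{G}$: there is a character $\chi \colon \widetilde{G} \to \mathbb{C}^\times$ with $f^A = \chi(A) f$ for $A \in \widetilde{G}$. But $\widetilde{G} \cong G$ is simple and nonabelian, so its abelianization is trivial and $\chi$ must be trivial. Therefore $f^A = f$ for all $A \in \widetilde{G}$, i.e. $f$ is $\widetilde{G}$-invariant, as required.

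The argument is essentially a bookkeeping exercise on top of the Proposition; the only mild subtlety — and the one step worth spelling out carefully — is the passage in the split case from "$f$ is semi-invariant under $\pi^{-1}(G)$'' (which is what the Remark gives) to "$f$ is semi-invariant under the complement $\widetilde{G}$,'' and then observing that simplicity of $\widetilde{G} \cong G$ kills the resulting character. One should also note that in the non-split case the chosen lift contains the scalars $\rho I_3$, but this causes no problem: a semi-invariant $f$ must then satisfy $f = f^{\rho I_3} = \rho^{\deg f} f$, which merely records that $\deg f \equiv 0 \bmod 3$ whenever a nonzero invariant exists — consistent with, not contradicting, the claim, since in that case the character on $\pi^{-1}(G)$ was trivial to begin with.
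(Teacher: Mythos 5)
Your proof is correct and follows essentially the same route as the paper: invoke the preceding Proposition's dichotomy, take $\widetilde{G} = \pi^{-1}(G)$ when that group has no nontrivial character, and otherwise take the split image $\widetilde{G} \cong G$, whose simplicity and nonabelianness force the character attached to any semi-invariant defining polynomial to be trivial. You merely spell out the bookkeeping (restriction of semi-invariance to the subgroup, triviality of the character) that the paper leaves implicit.
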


\begin{proof}
Let $\pi : \SL(3, \mathbb{C}) \rightarrow \PGL(3, \mathbb{C})$ be the natural surjective homomorphism. If $\pi^{-1}(G)$ has no nontrivial abelian character, then we take $\widetilde{G}$ to be $\pi^{-1}(G)$. Otherwise, $\pi^{-1}(G) \rightarrow G$ splits and there is a subgroup $\widetilde{G}$ of $\pi^{-1}(G)$ such that $\widetilde{G} \cong G$. Then any $G$-invariant curve is defined by a homogeneous polynomial invariant under $\widetilde{G}$.
\end{proof}

Next, we recall the definition of a primitive subgroup of $\PGL(n, \mathbb{C})$.

\begin{definition}\label{def of primitive}
\begin{enumerate}
\item[(1)] Let $G$ be a subgroup of $\GL(n, \mathbb{C})$.

Then $G$ is called an \textit{imprimitive} subgroup of $\GL(n, \mathbb{C})$ if there exists a direct sum decomposition $\mathbb{C}^{n} = V_{1} \oplus \cdots \oplus V_{r}$ with $r > 1$ and $\dim V_{i} > 0$ satisfying  the following: Take any transformation $A \in G$. Then there is a permutation $\sigma \in \mathfrak{S}_{r}$ such that $A \cdot V_{s} = V_{\sigma(s)}$ for any $s$.

Otherwise, $G$ is called a \textit{primitive} subgroup of $\GL(n, \mathbb{C})$.

\item[(2)] Let $G$ be a subgroup of $\PGL(n, \mathbb{C})$. 

Then $G$ is called a \textit{primitive} subgroup of $\PGL(n, \mathbb{C})$ if there exists a lift $\widetilde{G}$ of $G$ which $\widetilde{G}$ is a primitive subgroup of $\GL(n, \mathbb{C})$.
\end{enumerate}
\end{definition}

\begin{remark}\label{rem of primitive}
Let $G$ is a finite primitive subgroup of $\PGL(3, \mathbb{C})$. Then $G$ is conjugate to the Valentiner group $\mathcal{V}$, the icosahedral group $\mathcal{I}$, the Klein group $\mathcal{K}$, the Hessian group $H_{216}$ of order $216$ or its subgroup of order $36$ or $72$. In this groups, $\mathcal{V}$, $\mathcal{K}$ and $H_{216}$ are maximal.

Let $G'$ be a group containing $G$. From the definition, we easily see that $G'$ is also a finite primitive subgroup of $\PGL(3, \mathbb{C})$. Hence, if $G'$ contains $\mathcal{V}$ (resp. $\mathcal{K}$ or $H_{216}$), then $G'$ is equal to $\mathcal{V}$ (resp. $\mathcal{K}$ or $H_{216}$).
\end{remark}

In the following subsections, we recall basic facts on the Valentiner group $\mathcal{V}$, the icosahedral group $\mathcal{I}$ and the Klein group $\mathcal{K}$. Specifically, we give generators of a certain lift $\widetilde{G}$ of $G$ in $\GL(3, \mathbb{C})$ and describe the invariant ring for each of the groups $G = \mathcal{V}$, $\mathcal{I}$ or $\mathcal{K}$.

\subsection{The Valentiner group $\mathcal{V}$}

In this subsection, we give a description of the Valentiner group and its invariant homogeneous polynomials according to \cite{Crass}.

First, we give generators of the Valentiner group. We define the matrices $Z$, $T$, $Q$ and $P$ by
\[
Z :=
\left(
\begin{array}{ccc}
-1 & 0 & 0 \\
0 & 1 & 0 \\
0 & 0 & -1
\end{array}
\right),{\rm {\ }}
T :=
\left(
\begin{array}{ccc}
0 & 0 & 1 \\
1 & 0 & 0 \\
0 & 1 & 0
\end{array}
\right),{\rm {\ }}
\]
\[
Q :=
\left(
\begin{array}{ccc}
1 & 0 & 0 \\
0 & 0 & \rho^{2} \\
0 & -\rho & 0
\end{array}
\right) {\rm and {\ }}
P := \frac{1}{2}
\left(
\begin{array}{ccc}
1 & \tau^{-1} & -\tau \\
\tau^{-1} & \tau & 1 \\
\tau & -1 & \tau^{-1}
\end{array}
\right)
\]
where $\rho = e^{\frac{2}{3} \pi i}$ and $\tau = \displaystyle{\frac{1+\sqrt{5}}{2}}$.

We define $\mathcal{V}$ to be the subgroup of $\PGL(3, \mathbb{C})$ generated by $[Z]$, $[T]$, $[Q]$ and $[P]$. We call this group $\mathcal{V}$ the (projective) \textit{Valentiner group}. It is known that $\mathcal{V}$ is isomorphic to the alternating group $\mathfrak{A}_{6}$.

\begin{remark}\label{icosahedral group}
The matrices $Z$, $T$, $Q$ and $P$ are related to symmetry groups of regular polyhedra. (See [Crass, Subsection 2B].) In particular, the group $\widetilde{\mathcal{I}}$ generated by the matrices $Z$, $T$ and $P$ in $\SL(3, \mathbb{R})$ is the symmetry group  of an icosahedron in $\mathbb{R}^{3}$. It is called the icosahedral group and is isomorphic to the alternating group $\mathfrak{A}_{5}$.

We define $\mathcal{I}$ to be the subgroup of $\PGL(3, \mathbb{C})$ generated by $[Z]$, $[T]$ and $[P]$. Then $\mathcal{I}$ is isomorphic to $\widetilde{\mathcal{I}}$.
\end{remark}

The matrices $Z$, $T$, $Q$ and $P$ are contained in $\SL(3, \mathbb{C})$ and generate the preimage $\widetilde{\mathcal{V}}$ of $\mathcal{V}$ in $\SL(3, \mathbb{C})$. We call this group $\widetilde{\mathcal{V}}$ the \textit{linear Valentiner group}.

Next, we study invariant curves under the Valentiner group.

\begin{lemma}\label{inv of V}
Any projective plane curve invariant under $\mathcal{V}$ is defined by a homogeneous polynomial invariant under $\widetilde{\mathcal{V}}$.
\end{lemma}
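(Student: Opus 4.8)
The plan is to obtain this as the special case $G = \mathcal{V}$ of Lemma~\ref{semi-invariant prop}. Since $\mathcal{V} \cong \mathfrak{A}_{6}$ is a finite, simple, nonabelian subgroup of $\PGL(3,\mathbb{C})$, that lemma already provides \emph{some} lift $\widetilde{G}$ of $\mathcal{V}$ for which every $\mathcal{V}$-invariant curve is cut out by a $\widetilde{G}$-invariant polynomial. Inspecting its proof (and that of the Proposition it rests on), this lift can be taken to be the full preimage $\widetilde{\mathcal{V}} = \pi^{-1}(\mathcal{V})$, where $\pi \colon \SL(3,\mathbb{C}) \to \PGL(3,\mathbb{C})$ is the natural surjection, as soon as $\widetilde{\mathcal{V}}$ admits no nontrivial homomorphism to $\mathbb{C}^{\times}$. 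So the whole task reduces to showing that $\widetilde{\mathcal{V}}$ has no nontrivial abelian character.

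To prove that, I would first localize the question at the center: $\ker\pi = \{I_{3}, \rho I_{3}, \rho^{2} I_{3}\} \cong \mathbb{Z}/3\mathbb{Z}$ is central in $\widetilde{\mathcal{V}}$, and $\widetilde{\mathcal{V}}/\ker\pi \cong \mathfrak{A}_{6}$ is perfect, so $\widetilde{\mathcal{V}}^{\mathrm{ab}}$ is cyclic, generated by the class of $\rho I_{3}$; hence it suffices to prove $\rho I_{3} \in [\widetilde{\mathcal{V}}, \widetilde{\mathcal{V}}]$. Equivalently, by the dichotomy in the Proposition, it is enough to check that $\pi$ does \emph{not} split over $\mathcal{V}$. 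This is where the one genuine input enters: a splitting would amount to an injective homomorphism $\mathfrak{A}_{6} \hookrightarrow \SL(3,\mathbb{C})$, i.e.\ a faithful $3$-dimensional representation of $\mathfrak{A}_{6}$, and none exists, since no nontrivial representation of $\mathfrak{A}_{6}$ has dimension less than $5$ (so every $3$-dimensional representation of $\mathfrak{A}_{6}$ is trivial, and its image in $\PGL(3,\mathbb{C})$ cannot be $\mathcal{V}$). Thus $\pi$ does not split over $\mathcal{V}$, and by the Proposition $\widetilde{\mathcal{V}}$ has no nontrivial abelian character.

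With this in hand, Lemma~\ref{semi-invariant prop} applies with $\widetilde{G} = \widetilde{\mathcal{V}}$ and yields the assertion. The substance of the argument is really just the structural fact that $\widetilde{\mathcal{V}}$ is the perfect triple cover $3.\mathfrak{A}_{6}$ rather than $\mathbb{Z}/3\mathbb{Z} \times \mathfrak{A}_{6}$, and I expect this to be the only real obstacle. One can also bypass the representation-theoretic fact entirely by instead exhibiting $\rho I_{3}$ explicitly as a product of commutators of the generators $Z, T, Q, P$ (using that $\rho$ occurs as an entry of $Q$) — an elementary but slightly tedious matrix computation, of the kind that the SINGULAR code in the appendix can confirm.
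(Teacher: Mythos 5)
Your proposal is correct and follows essentially the same route as the paper: reduce via Lemma~\ref{semi-invariant prop} (and the dichotomy of the preceding Proposition) to showing that $\pi$ does not split over $\mathcal{V}$, which follows because $\mathfrak{A}_{6}$ has no faithful $3$-dimensional representation, so the lift can be taken to be the full preimage $\widetilde{\mathcal{V}} = \pi^{-1}(\mathcal{V})$. The only difference is that you also justify the representation-theoretic fact (smallest nontrivial irreducible of $\mathfrak{A}_{6}$ has dimension $5$) and sketch an alternative commutator computation, neither of which changes the argument.
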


\begin{proof}
The alternating group $\mathfrak{A}_{6}$ has no faithful $3$-dimension representation. Thus the natural  homomorphism $\widetilde{\mathcal{V}} \rightarrow \mathcal{V}$ cannot split, and $\widetilde{\mathcal{V}}$ has no nontrivial abelian character. By the arguments of Lemma \ref{semi-invariant prop}, the assertion holds.
\end{proof}

By this lemma, we have only to consider homogeneous polynomials invariant under $\widetilde{\mathcal{V}}$.

Let $P_{\widetilde{\mathcal{V}}}(t)$ be the Poincar\'{e} series of the invariant ring $\mathbb{C}[x, y, z]^{\widetilde{\mathcal{V}}}$. By Molien's theorem, we obtain
\[
P_{\widetilde{\mathcal{V}}}(t) = \frac{1 - t^{90}}{(1 - t^{6})(1 - t^{12})(1 - t^{30})(1 - t^{45})}.
\]
Now, we give $\widetilde{\mathcal{V}}$-invariant and algebraically independent homogeneous polynomials of degrees $6$, $12$ and $30$ and a $\widetilde{\mathcal{V}}$-invariant homogeneous polynomial of degree $45$.

First, we define six quadratic homogeneous polynomials:
\[
C_{\overline{1}}(\bm{x}) := x^{2} + y^{2} + z^{2},
\]
\[
C_{\overline{2}}(\bm{x}) := C_{\overline{1}}(Q^{-1} \bm{x}) = x^{2} + \rho^{2} y^{2} + \rho z^{2},
\]
\[
\begin{split}
C_{\overline{3}}(\bm{x}) & := C_{\overline{2}}(P^{\mathalpha{-}4} \bm{x}) \\
& = \frac{1}{4} \left\{ (\mathalpha{-} 1 \mathalpha{-} \rho \mathalpha{+} \tau \mathalpha{+} 2\rho\tau) x^{2}
+ (1 \mathalpha{-} 2\tau \mathalpha{-} \rho\tau) y^{2}
+ (\rho \mathalpha{+} \tau \mathalpha{-} \rho\tau) z^{2} \right. \\
& \quad \left. \mathalpha{+} (\mathalpha{-} 4 \mathalpha{-} 2\rho \mathalpha{+} 2\tau \mathalpha{-}\rho\tau) x y
+ (\mathalpha{-} 2 \mathalpha{+} 2\rho \mathalpha{-} 2\tau \mathalpha{-} 4\rho\tau) y z
+ (2 \mathalpha{+} 4\rho \mathalpha{-} 4\tau \mathalpha{-} 2\rho\tau) z x \right\}, \\
\end{split}
\]
\[
\begin{split}
C_{\overline{4}}(\bm{x}) & := C_{\overline{2}}(P^{\mathalpha{-}3} \bm{x}) \\
& = \frac{1}{4} \left\{ (1 \mathalpha{-} 2\tau \mathalpha{-} \rho\tau) x^{2}
+ (\rho \mathalpha{+} \tau \mathalpha{-} \rho\tau) y^{2}
+ (\mathalpha{-} 1 \mathalpha{-} \rho \mathalpha{+} \tau \mathalpha{+} 2\rho\tau) z^{2} \right. \\
& \quad \left. \mathalpha{+} (\mathalpha{-} 2 \mathalpha{+} 2\rho \mathalpha{-} 2\tau \mathalpha{-} 4\rho\tau) x y
+ (2 \mathalpha{+} 4\rho \mathalpha{-} 4\tau \mathalpha{-} 2\rho\tau) y z
+ (\mathalpha{-} 4 \mathalpha{-} 2\rho \mathalpha{+} 2\tau \mathalpha{-} 2\rho\tau) z x \right\}, \\
\end{split}
\]
\[
\begin{split}
C_{\overline{5}}(\bm{x}) & := C_{\overline{2}}(P^{\mathalpha{-}2} \bm{x}) \\
& = \frac{1}{4} \left\{ (1 \mathalpha{-} 2\tau \mathalpha{-} \rho\tau) x^{2}
+ (\rho \mathalpha{+} \tau \mathalpha{-} \rho\tau) y^{2}
+ (\mathalpha{-} 1 \mathalpha{-} \rho \mathalpha{+} \tau \mathalpha{+} 2\rho\tau) z^{2} \right. \\
& \quad \left. \mathalpha{+} (\mathalpha{-} 2 \mathalpha{+} 2\rho \mathalpha{-} 2\tau \mathalpha{-} 4\rho\tau) x y
+ (\mathalpha{-} 2 \mathalpha{-} 4\rho \mathalpha{+} 4\tau \mathalpha{+} 2\rho\tau) y z
+ (4 \mathalpha{+} 2\rho \mathalpha{-} 2\tau \mathalpha{+} 2\rho\tau) z x \right\} \\
\end{split}
\]
and
\[
\begin{split}
C_{\overline{6}}(\bm{x}) & := C_{\overline{2}}(P^{\mathalpha{-}1} \bm{x}) \\
& = \frac{1}{4} \left\{ (\mathalpha{-} 1 \mathalpha{-} \rho \mathalpha{+} \tau \mathalpha{+} 2\rho\tau) x^{2}
+ (1 \mathalpha{-} 2\tau \mathalpha{-} \rho\tau) y^{2}
+ (\rho \mathalpha{+} \tau \mathalpha{-} \rho\tau) z^{2} \right. \\
& \quad \left. \mathalpha{+} (\mathalpha{-} 4 \mathalpha{-} 2\rho \mathalpha{+} 2\tau \mathalpha{-}\rho\tau) x y
+ (2 \mathalpha{-} 2\rho \mathalpha{+} 2\tau \mathalpha{+} 4\rho\tau) y z
+ (\mathalpha{-} 2 \mathalpha{-} 4\rho \mathalpha{+} 4\tau \mathalpha{+} 2\rho\tau) z x \right\}. \\
\end{split}
\]
Since $Z$, $T$ and $P$ are orthogonal matrices, $C_{\overline{1}}(x, y, z)$ is invariant under $\mathcal{I}$. We have $(\mathcal{V} : \mathcal{I}) = 6$, and it turn out that, for any $A \in \mathcal{\widetilde{V}}$, there exist indices $k$ and $j = 1, \cdots, 6 $ such that $C_{\overline{i}}^{A} = \rho^{k} C_{\overline{j}}$, and the Valentiner group $\mathcal{V}$ permutes the six conics.

The Valentiner group $\mathcal{V}$ fixes the sextic homogeneous polynomial
\[
F_{\mathcal{V}}(x, y, z) := \sum^{6}_{n = 1} \left( C_{\overline{n}}(x, y, z) \right)^{3}.
\]
This is a homogeneous polynomial of degree $6$ invariant under $\widetilde{\mathcal{V}}$. (See Appendix A.1 for the explicit form in the ``Wiman coordinate''.) We define the homogeneous polynomial $\Phi_{\mathcal{V}}(x, y, z)$ of degree $12$ as the Hessian of $F_{\mathcal{V}}$, i.e., 
\[
\Phi_{\mathcal{V}}(x, y, z) := \det H(F_{\mathcal{V}})
= \left|
\begin{array}{ccc}
\dfrac{\partial^{2} F_{\mathcal{V}}}{\partial x^{2}} & \dfrac{\partial^{2} F_{\mathcal{V}}}{\partial x \partial y} & \dfrac{\partial^{2} F_{\mathcal{V}}}{\partial x \partial z} \\
&&\\
\dfrac{\partial^{2} F_{\mathcal{V}}}{\partial y \partial x} & \dfrac{\partial^{2} F_{\mathcal{V}}}{\partial y^{2}} & \dfrac{\partial^{2} F_{\mathcal{V}}}{\partial y \partial z}\\
&&\\
\dfrac{\partial^{2} F_{\mathcal{V}}}{\partial z \partial x} & \dfrac{\partial^{2} F_{\mathcal{V}}}{\partial z \partial y} & \dfrac{\partial^{2} F_{\mathcal{V}}}{\partial z^{2}}
\end{array}
\right|.
\]
Furthermore, the homogeneous polynomial $\Psi_{\mathcal{V}}(x, y, z)$ of degree $30$ is defined as the border Hessian of $F_{\mathcal{V}}$ and $\Phi_{\mathcal{V}}$, i.e.,
\[
\Psi_{\mathcal{V}}(x, y, z) := \det BH(F_{\mathcal{V}}, \Phi_{\mathcal{V}})
= \left|
\begin{array}{ccc|c}
& & & \dfrac{\partial \Phi_{\mathcal{V}}}{\partial x} \\
&&&\\
& H(F_{\mathcal{V}}) & & \dfrac{\partial \Phi_{\mathcal{V}}}{\partial y} \\
&&&\\
& & & \dfrac{\partial \Phi_{\mathcal{V}}}{\partial z} \\
&&&\\
\hline
&&&\\
\dfrac{\partial \Phi_{\mathcal{V}}}{\partial x} & \dfrac{\partial \Phi_{\mathcal{V}}}{\partial y} & \dfrac{\partial \Phi_{\mathcal{V}}}{\partial z} & 0
\end{array}
\right|.
\]
The homogeneous polynomial $X_{\mathcal{V}}(x, y, z)$ of degree $45$ is defined as the Jacobian of $(F_{\mathcal{V}}, \Phi_{\mathcal{V}}, \Psi_{\mathcal{V}})$, i.e.,
\[
X_{\mathcal{V}}(x, y, z) := \det J(F_{\mathcal{V}}, \Phi_{\mathcal{V}}, \Psi_{\mathcal{V}})
= \left|
\begin{array}{ccc}
\dfrac{\partial F_{\mathcal{V}}}{\partial x} & \dfrac{\partial F_{\mathcal{V}}}{\partial y} & \dfrac{\partial F_{\mathcal{V}}}{\partial z} \\
&&\\
\dfrac{\partial \Phi_{\mathcal{V}}}{\partial x} & \dfrac{\partial \Phi_{\mathcal{V}}}{\partial y} & \dfrac{\partial \Phi_{\mathcal{V}}}{\partial z}\\
&&\\
\dfrac{\partial \Psi_{\mathcal{V}}}{\partial x} & \dfrac{\partial \Psi_{\mathcal{V}}}{\partial y} & \dfrac{\partial \Psi_{\mathcal{V}}}{\partial z}
\end{array}
\right|.
\]
It is easy to see that $\Phi_{\mathcal{V}}$, $\Psi_{\mathcal{V}}$ and $X_{\mathcal{V}}$ are invariant under $\widetilde{\mathcal{V}}$.

\subsection{The icosahedral group $\mathcal{I}$}

In this subsection, we give the invariant ring for the icosahedral group $\mathcal{I}$.

First, from Remark \ref{icosahedral group}, we recall the definition of the icosahedral group $\widetilde{\mathcal{I}}$, isomorphic to $\mathfrak{A}_{5}$ and that $\mathcal{I}$ is the isomorphic image of $\widetilde{\mathcal{I}}$ in $\PGL(3, \mathbb{C})$.

Next, we look at invariant curves under $\mathcal{I}$. It suffices to consider invariant homogeneous polynomials under $\widetilde{\mathcal{I}}$ by the following lemma.

\begin{lemma}\label{inv of I}
Any projective plane curve invariant under $\mathcal{I}$ is defined by a homogeneous polynomial invariant under $\widetilde{\mathcal{I}}$.
\end{lemma}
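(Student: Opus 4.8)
The plan is to mimic the proof of Lemma~\ref{inv of V}, the only new ingredient being that the particular lift $\widetilde{\mathcal{I}}$ fixed in Remark~\ref{icosahedral group} has no nontrivial abelian character. Recall that $\widetilde{\mathcal{I}} = \langle Z, T, P\rangle \subset \SL(3,\mathbb{C})$ is a genuine lift of $\mathcal{I}$, i.e.\ the natural homomorphism $\pi$ restricts to an isomorphism $\widetilde{\mathcal{I}} \cong \mathcal{I} \cong \mathfrak{A}_{5}$. Since $\mathfrak{A}_{5}$ is simple and nonabelian it is perfect, so every group homomorphism $\widetilde{\mathcal{I}} \to \mathbb{C}^{\times}$ is trivial. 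This is the one fact to record before the main argument.

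Next I would invoke the Remark following Lemma~\ref{semi-invariant prop}: since $\pi(\widetilde{\mathcal{I}}) = \mathcal{I}$, a plane curve $C = V(f)$ is invariant under $\mathcal{I}$ if and only if $f$ is semi-invariant under $\widetilde{\mathcal{I}}$. So, given an $\mathcal{I}$-invariant curve $C$ with defining homogeneous polynomial $f$, there is a character $\chi : \widetilde{\mathcal{I}} \to \mathbb{C}^{\times}$ with $f^{A} = \chi(A) f$ for all $A \in \widetilde{\mathcal{I}}$; that $\chi$ is a homomorphism follows from $f^{AB} = (f^{A})^{B}$. By the previous paragraph $\chi$ is trivial, hence $f^{A} = f$ for every $A \in \widetilde{\mathcal{I}}$, i.e.\ $f$ is invariant under $\widetilde{\mathcal{I}}$. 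This is exactly the assertion.

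There is essentially no obstacle here; the only point worth spelling out — and the sole place where the situation differs from Lemma~\ref{inv of V} — is the choice of lift. Unlike the case of $\mathcal{V}$, the full preimage $\pi^{-1}(\mathcal{I}) = \widetilde{\mathcal{I}} \times \Ker \pi \cong \mathfrak{A}_{5} \times \mathbb{Z}/3\mathbb{Z}$ does carry nontrivial abelian characters (coming from the central factor $\Ker \pi$), so one cannot take $\widetilde{G} = \pi^{-1}(\mathcal{I})$ as in the first case of Lemma~\ref{semi-invariant prop}; one must instead use the splitting $\widetilde{\mathcal{I}}$, as in the second case. Once this lift is fixed the reasoning is the verbatim ``semi-invariant $\Rightarrow$ invariant'' argument of Lemma~\ref{semi-invariant prop}. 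I will therefore present the proof as a short reduction: $\widetilde{\mathcal{I}} \cong \mathfrak{A}_{5}$ is perfect, hence has no nontrivial abelian character, so by the arguments of Lemma~\ref{semi-invariant prop} every $\mathcal{I}$-invariant curve is defined by a homogeneous polynomial invariant under $\widetilde{\mathcal{I}}$.
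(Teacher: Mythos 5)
Your proof is correct and follows essentially the same route as the paper: the paper's proof likewise invokes the argument of Lemma \ref{semi-invariant prop} together with the fact that $\widetilde{\mathcal{I}} \cong \mathcal{I} \cong \mathfrak{A}_{5}$ is simple and nonabelian, so that the split lift has no nontrivial abelian character and semi-invariance forces invariance. Your extra remark on why one must use the splitting $\widetilde{\mathcal{I}}$ rather than the full preimage $\pi^{-1}(\mathcal{I})$ is accurate but just makes explicit what the paper leaves implicit.
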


\begin{proof}
This follows from the arguments of Lemma \ref{semi-invariant prop} and the fact that $\widetilde{\mathcal{I}} \cong \mathcal{I} \cong \mathfrak{A}_{5}$ is simple and nonabelian.
\end{proof}
The Poincar\'{e} series of $\widetilde{\mathcal{I}}$ is
\[
P_{\widetilde{\mathcal{I}}}(t) = \frac{1 - t^{30}}{(1 - t^{2})(1 - t^{6})(1 - t^{10})(1 - t^{15})}.
\]
We explicitly give $\widetilde{\mathcal{I}}$-invariant and algebraically independent homogeneous polynomials of degree $2$, $6$ and $10$ and a $\widetilde{\mathcal{I}}$-invariant homogeneous polynomial of degree $15$.

As we saw in the previous subsection, the homogeneous polynomial $C_{\overline{1}}$ is invariant under $\mathcal{I} \subset \mathcal{V}$. We define the quadratic homogeneous polynomial
\[
F_{\mathcal{I}}(x, y, z) := C_{\overline{1}}(x, y, z) = x^{2} + y^{2} + z^{2}.
\]
In addition, $F_{\mathcal{V}}$ is invariant under $\mathcal{V}$, hence under $\mathcal{I}$. We define the sextic homogeneous polynomial
\[
\Phi_{\mathcal{I}}(x, y, z) := F_{\mathcal{V}}(x, y, z).
\]
As in the case of the Valentiner group, the homogeneous polynomial $\Psi_{\mathcal{I}}$ of degree $10$ is defined as the border Hessian of $F_{\mathcal{I}}$ and $\Phi_{\mathcal{I}}$, i.e.,
\[
\Psi_{\mathcal{I}}(x, y, z) := \det BH(F_{\mathcal{I}}, \Phi_{\mathcal{I}})
\]
and the homogeneous polynomial $X_{\mathcal{I}}$ of degree $15$ is defined as the Jacobian of $(F_{\mathcal{I}}, \Phi_{\mathcal{I}}, \Psi_{\mathcal{I}})$, i.e.,
\[
X_{\mathcal{I}}(x, y, z) := \det J(F_{\mathcal{I}}, \Phi_{\mathcal{I}}, \Psi_{\mathcal{I}}).
\]
Then $\Psi_{\mathcal{I}}$ and $X_{\mathcal{I}}$ are invariant under $\widetilde{\mathcal{I}}$.

\subsection{The Klein group $\mathcal{K}$}

In this subsection, we describe an embedding $\mathcal{K}$ of the Klein group $\PSL(2, \mathbb{F}_{7})$ to $\PGL(3, \mathbb{C})$ and describe homogeneous polynomials invariant under $\mathcal{K}$ following \cite{Elkies}.

The group $\PSL(2, \mathbb{F}_{7})$ has a faithful $3$-dimensional representation $\PSL(2, \mathbb{F}_{7}) \rightarrow \GL(3, \mathbb{C})$. By [Elkies, Subsection 1.1], we may take an embedding for which the image $\widetilde{\mathcal{K}}$ is generated by three matrices
\[
S :=
\left(
\begin{array}{ccc}
\zeta^{4} & 0 & 0 \\
0 & \zeta^{2} & 0 \\
0 & 0 & \zeta
\end{array}
\right),{\rm {\ }}
T :=
\left(
\begin{array}{ccc}
0 & 0 & 1\\
1 & 0 & 0\\
0 & 1 & 0\\
\end{array}
\right)
\]
and
\[
R :=
-\frac{1}{\sqrt{-7}}
\left(
\begin{array}{ccc}
\zeta-\zeta^{6} & \zeta^{2}-\zeta^{5} & \zeta^{4}-\zeta^{3} \\
\zeta^{2}-\zeta^{5} & \zeta^{4}-\zeta^{3} & \zeta-\zeta^{6} \\
\zeta^{4}-\zeta^{3} & \zeta-\zeta^{6} & \zeta^{2}-\zeta^{5}
\end{array}
\right)
\]
where $\zeta := e^{\frac{2\pi i}{7}}$. Let $\mathcal{K}$ be the image of $\widetilde{\mathcal{K}}$ by the projection $\GL(3, \mathbb{C}) \rightarrow \PGL(3, \mathbb{C})$. Then $\mathcal{K}$ is isomorphic to $\widetilde{\mathcal{K}}$.

Next, we study invariant curves under $\mathcal{K}$. We see the following lemma in the same way of Lemma \ref{inv of I}.

\begin{lemma}\label{inv of K}
Any projective plane curve invariant under $\mathcal{K}$ is defined by a homogeneous polynomial invariant under $\widetilde{\mathcal{K}}$.
\end{lemma}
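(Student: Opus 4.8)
The plan is to argue exactly as in the proof of Lemma~\ref{inv of I}, combining the general mechanism of Lemma~\ref{semi-invariant prop} with the remark that identifies $\mathcal{K}$-invariant curves with semi-invariant defining polynomials. First I would record that $\widetilde{\mathcal{K}}$ is a lift of $\mathcal{K}$ in the required sense: it is the image of an embedding $\PSL(2, \mathbb{F}_{7}) \hookrightarrow \GL(3, \mathbb{C})$, so $\widetilde{\mathcal{K}} \cong \PSL(2, \mathbb{F}_{7})$, and by definition its image under $\GL(3, \mathbb{C}) \to \PGL(3, \mathbb{C})$ is $\mathcal{K}$, i.e. $\pi(\widetilde{\mathcal{K}}) = \mathcal{K}$. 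Moreover $\widetilde{\mathcal{K}} \subset \SL(3, \mathbb{C})$: the determinant restricted to $\widetilde{\mathcal{K}}$ is a homomorphism from a perfect group to $\mathbb{C}^{\times}$, hence trivial. Thus $\widetilde{\mathcal{K}}$ qualifies as a lift in the sense of the earlier remark.

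Next, let $C$ be a projective plane curve invariant under $\mathcal{K}$ and write $C = V(f)$ for a homogeneous $f \in \mathbb{C}[x, y, z]$. By the remark relating invariant curves and semi-invariant polynomials, applied to the lift $\widetilde{\mathcal{K}}$ of $\mathcal{K}$, the polynomial $f$ is semi-invariant under $\widetilde{\mathcal{K}}$; that is, there is a character $\chi : \widetilde{\mathcal{K}} \to \mathbb{C}^{\times}$ with $f^{A} = \chi(A) f$ for every $A \in \widetilde{\mathcal{K}}$. The crucial point is that $\widetilde{\mathcal{K}} \cong \PSL(2, \mathbb{F}_{7})$ is simple and nonabelian, hence perfect, so every homomorphism from it into the abelian group $\mathbb{C}^{\times}$ is trivial. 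Therefore $\chi$ is trivial and $f$ is invariant under $\widetilde{\mathcal{K}}$, which is the assertion.

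There is essentially no obstacle in this argument; the only point worth a remark is that $\widetilde{\mathcal{K}}$ is genuinely isomorphic to $\PSL(2, \mathbb{F}_{7})$ (equivalently, contains no nontrivial scalar matrix), which is immediate because any nontrivial quotient of the simple group $\PSL(2, \mathbb{F}_{7})$ is isomorphic to $\PSL(2, \mathbb{F}_{7})$. Alternatively, one can route the proof through the dichotomy in Lemma~\ref{semi-invariant prop}: the preimage $\pi^{-1}(\mathcal{K})$ is a central extension of $\PSL(2, \mathbb{F}_{7})$ by $\mathbb{Z}/3\mathbb{Z}$, and since $3$ does not divide the order of the Schur multiplier of $\PSL(2, \mathbb{F}_{7})$ this extension splits; the complement is precisely $\widetilde{\mathcal{K}}$, which then serves as the lift $\widetilde{G}$ produced by that lemma. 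Either formulation gives the claim with no computation.
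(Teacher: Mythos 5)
Your proof is correct and follows essentially the same route as the paper, which simply argues ``in the same way as Lemma~\ref{inv of I}'': since $\widetilde{\mathcal{K}}\cong\mathcal{K}\cong\PSL(2,\mathbb{F}_{7})$ is simple and nonabelian (hence perfect), any character $\widetilde{\mathcal{K}}\to\mathbb{C}^{\times}$ is trivial, so the semi-invariance of a defining polynomial of a $\mathcal{K}$-invariant curve upgrades to invariance. Your verifications that $\widetilde{\mathcal{K}}\subset\SL(3,\mathbb{C})$ and your alternative via the splitting of $\pi^{-1}(\mathcal{K})$ are fine but not needed beyond what the paper's Lemma~\ref{semi-invariant prop} argument already provides.
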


The Poincar\'{e} series $P_{\widetilde{\mathcal{K}}}(t)$ is given by
\[
P_{\widetilde{\mathcal{K}}}(t) = \frac{1-t^{42}}{(1-t^{4})(1-t^{6})(1-t^{14})(1-t^{21})}.
\]
We explicitly give $\widetilde{\mathcal{K}}$-invariant and algebraically independent homogeneous polynomials of degrees $4$, $6$ and $14$ and a $\widetilde{\mathcal{K}}$-invariant homogeneous polynomial of degree $21$.

The homogeneous polynomial $F_{\mathcal{K}}$ of degree $4$ is defined by
\[
F_{\mathcal{K}}(x, y, z) := x^{3} y + y^{3} z + z^{3} x.
\]

\begin{remark}
The curve $K_{4}$ defined by $F_{\mathcal{K}}$ is called the \textit{Klein quartic}. The automorphism group of $K_{4}$ is isomorphic to $\PSL(2, \mathbb{F}_{7})$, which has been known since \cite{Klein}. The group $\PSL(2, \mathbb{F}_{7}) \cong \Aut V(K_{4})$ is called the \textit{Klein group}.
\end{remark}

We can give other invariant homogeneous polynomials as in the case of the Valentiner group. We define the homogeneous polynomials $\Phi_{\mathcal{K}}$, $\Psi_{\mathcal{K}}$ and $X_{\mathcal{K}}$ of degree $6$, $14$ and $21$, respectively, by
\[
\Phi_{\mathcal{K}}(x, y, z) := -\frac{1}{54} \det H(F_{\mathcal{K}}),
\]
\[
\Psi_{\mathcal{K}}(x, y, z) := -\frac{1}{9} \det BH(F_{\mathcal{K}}, \Phi_{\mathcal{K}})
\]
and
\[
X_{\mathcal{K}}(x, y, z) := \det J(F_{\mathcal{K}}, \Phi_{\mathcal{K}}, \Psi_{\mathcal{K}}).
\]
One can see that $F_{\mathcal{K}}$, $\Phi_{\mathcal{K}}$, $\Psi_{\mathcal{K}}$ and $X_{\mathcal{K}}$ are invariant under $\widetilde{\mathcal{K}}$.

\subsection{The invariant curves}

In this subsection, we summarize some facts related to invariant homogeneous polynomials in a form that is convenient for later use.

Let $G$ be the Valentiner group $\mathcal{V}$, the icosahedral group $\mathcal{I}$ or the Klein group $\mathcal{K}$. If $G = \mathcal{V}$, for example, then $F_{G}$ denotes the $\mathcal{V}$-invariant homogeneous polynomial $F_{\mathcal{V}}$, and so on. Take $\widetilde{G}$ to be the corresponding lift $\widetilde{\mathcal{V}}$, $\widetilde{\mathcal{I}}$ or $\widetilde{\mathcal{K}}$.

First, we tabulate the order of $G$ and the degrees of the homogeneous polynomials $F_{G}$, $\Phi_{G}$, $\Psi_{G}$ and $X_{G}$ for each group $G$.
\begin{table}[h]
\caption{degrees of invariants}
\[
\begin{array}{|c|c|cccc|}
\hline 
G & \# G & \deg F_{G} & \deg \Phi_{G} & \deg \Psi_{G} & \deg X_{G} \\
\hline
\mathcal{V} & 360 & 6 & 12 & 30 & 45 \\
\mathcal{I} & 60 & 2 & 6 & 10 & 15 \\
\mathcal{K} & 168 & 4 & 6 & 14 & 21 \\
\hline
\end{array}
\]
\label{tab:degree}
\end{table}

By Lemmas \ref{inv of V}, \ref{inv of I} and \ref{inv of K}, we obtain the following.

\begin{lemma}\label{inv of G}
Any projective plane curve invariant under $G$ is defined by a homogeneous polynomial invariant under $\widetilde{G}$.
\end{lemma}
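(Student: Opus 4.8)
The plan is to dispose of the statement by a direct case analysis on $G$, since each of the three possibilities for $G$ has already been settled individually: if $G = \mathcal{V}$ the assertion is Lemma~\ref{inv of V}, if $G = \mathcal{I}$ it is Lemma~\ref{inv of I}, and if $G = \mathcal{K}$ it is Lemma~\ref{inv of K}. Thus the proof reduces to a one-line invocation of these three lemmas, with $\widetilde{G}$ taken to be $\widetilde{\mathcal{V}}$, $\widetilde{\mathcal{I}}$ or $\widetilde{\mathcal{K}}$ accordingly.

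It is worth recording where the content actually lies, since this is what makes the lemma usable in the later sections. Each of Lemmas~\ref{inv of V}, \ref{inv of I} and \ref{inv of K} is an instance of Lemma~\ref{semi-invariant prop}: the groups $\mathcal{V}$, $\mathcal{I}$ and $\mathcal{K}$ are simple and nonabelian (being isomorphic to $\mathfrak{A}_6$, $\mathfrak{A}_5$ and $\PSL(2, \mathbb{F}_7)$), so a $G$-invariant curve $C = V(f)$ has $f$ semi-invariant under the chosen lift $\widetilde{G}$, and the Proposition above upgrades this to genuine invariance once one knows that $\widetilde{G}$ has no nontrivial homomorphism to $\mathbb{C}^{\times}$. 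For $\widetilde{\mathcal{V}}$ one uses that $\mathfrak{A}_6$ has no faithful $3$-dimensional representation, so $\widetilde{\mathcal{V}} \to \mathcal{V}$ does not split and $\widetilde{\mathcal{V}} = \pi^{-1}(\mathcal{V})$ is perfect; for $\widetilde{\mathcal{I}}$ and $\widetilde{\mathcal{K}}$ the lift is a faithful copy of $\mathfrak{A}_5$, resp.\ $\PSL(2, \mathbb{F}_7)$, in $\SL(3, \mathbb{C})$, and these groups are perfect, hence again admit no nontrivial abelian character.

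There is no genuine obstacle here; the only point requiring care is bookkeeping, namely that the lift $\widetilde{G}$ appearing in the statement --- and later used to define $F_{G}$, $\Phi_{G}$, $\Psi_{G}$ and $X_{G}$ --- is the \emph{same} lift for which the semi-invariance/invariance equivalence was established in the corresponding subsection. This holds automatically, because in each subsection $\widetilde{\mathcal{V}}$, $\widetilde{\mathcal{I}}$ or $\widetilde{\mathcal{K}}$ was fixed before the relevant lemma was proved, and the invariant polynomials were constructed from generators of exactly that group.
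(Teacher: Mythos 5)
Your proposal is correct and matches the paper exactly: the paper derives Lemma~\ref{inv of G} by directly invoking Lemmas~\ref{inv of V}, \ref{inv of I} and \ref{inv of K} for $G = \mathcal{V}$, $\mathcal{I}$ and $\mathcal{K}$ respectively, which is precisely your case analysis. Your additional remarks on how those lemmas rest on Lemma~\ref{semi-invariant prop} and on the choice of lift are accurate but not needed for the argument.
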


Next, we have the following lemma on the invariant ring and invariant curves for each group.

\begin{lemma}\label{basic prop of invariants}
For each of groups $G = \mathcal{V}, \mathcal{I}$ and $\mathcal{K}$, the homogeneous polynomials $F_{G}$, $\Phi_{G}$, $\Psi_{G}$ and $X_{G}$ satisfy the following:
\begin{enumerate}
\item[(0)] $\mathbb{C}[x, y, z]^{\widetilde{G}} = \mathbb{C}[F_{G}, \Phi_{G}, \Phi_{G}, X_{G}]$.
\item[(1)] The homogeneous polynomials $F_{G}, \Phi_{G}$ and $\Psi_{G}$ are algebraically independent over $\mathbb{C}$.
\item[(2)] The homogeneous polynomial $X_{G}^{2}$ is in $\mathbb{C}[F_{G}, \Phi_{G}, \Psi_{G}]$.
\item[(3)] The curve $V(X_{G})$ is a union of $\deg X_{G}$ lines. Thus it is reducible.
\item[(4)] $V(F_{G}) \cap V(\Phi_{G}) \cap V(\Psi_{G}) = \emptyset$.
\item[(5)] The curves $V(F_{G})$, $V(\Phi_{G})$ and $V(\Psi_{G})$ are nonsingular.
\item[(6)] The curves $V(F_{G})$ and $V(\Phi_{G})$ meet transversally. 
\item[(7)] The sets $V(F_{G}) \cap V(\Phi_{G})$ and $V(F_{G}) \cap V(\Psi_{G})$ are $G$-orbits.
\end{enumerate}
\end{lemma}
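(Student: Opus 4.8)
The plan is to derive the algebraic assertions (0)--(3) formally from (4) together with the Poincar\'e series recorded above, and to treat the geometric assertions (5)--(7) by a B\'ezout count combined with the classical description of the curves $V(F_{G})$, $V(\Phi_{G})$, $V(\Psi_{G})$ and of the small $G$-orbits in $\mathbb{P}^{2}$.

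The first and central step is (4). For $G=\mathcal{I}$ there is a conceptual route: restricting to the $\widetilde{\mathcal{I}}$-invariant conic $V(F_{\mathcal{I}})\cong\mathbb{P}^{1}$, on which $\widetilde{\mathcal{I}}$ acts through the (binary) icosahedral group, the restrictions of $\Phi_{\mathcal{I}}$ and $\Psi_{\mathcal{I}}$ are nonzero invariant binary forms of degrees $12$ and $20$ (nonvanishing is seen by evaluating at a single point of the conic), hence scalar multiples of the two lowest icosahedral forms, whose zero sets --- the $12$ vertices and the $20$ face centers of an icosahedron --- are disjoint. For $G=\mathcal{V}$ and $\mathcal{K}$ I would instead verify directly that the homogeneous ideal $(F_{G},\Phi_{G},\Psi_{G})$ has radical $(x,y,z)$, a Gr\"obner-basis elimination recorded in the appendix; this is exactly the statement that $F_{G},\Phi_{G},\Psi_{G}$ form a homogeneous system of parameters for $\mathbb{C}[x,y,z]^{\widetilde{G}}$, in accordance with the classical invariant theory of Wiman and Klein.

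Granting (4): $V(F_{G},\Phi_{G},\Psi_{G})=\{0\}$ in $\mathbb{C}^{3}$, so $R:=\mathbb{C}[F_{G},\Phi_{G},\Psi_{G}]$ is a subring over which $\mathbb{C}[x,y,z]$ is a finite module; hence $\dim R=3$ and the three generators are algebraically independent, which is (1). The invariant ring $A:=\mathbb{C}[x,y,z]^{\widetilde{G}}$ is finite over $R$ and Cohen--Macaulay by the Hochster--Eagon theorem, hence a graded free $R$-module, and dividing Poincar\'e series gives $P_{A}(t)/P_{R}(t)=(1-t^{2e})/(1-t^{e})=1+t^{e}$ with $e:=\deg X_{G}$, so $A=R\cdot 1\oplus R\cdot w$ for a homogeneous invariant $w$ of degree $e$. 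In each of the three cases $e$ is odd while $\deg F_{G},\deg\Phi_{G},\deg\Psi_{G}$ are all even, so $R_{e}=0$; since $X_{G}=\det J(F_{G},\Phi_{G},\Psi_{G})$ is a $\widetilde{G}$-invariant of degree $e$ that is nonzero by (1), it must be a nonzero scalar multiple of $w$, proving (0) in the form $A=\mathbb{C}[F_{G},\Phi_{G},\Psi_{G},X_{G}]$; then $X_{G}^{2}\in A_{2e}=R_{2e}\oplus R_{e}X_{G}=R_{2e}\subset R$, which is (2). For (3): the $\deg X_{G}$ axes of the involutions of $G$ are permuted by conjugation, so their product is a semi-invariant, hence an invariant since $\widetilde{G}$ is perfect, and it is a nonzero form of degree $e$; as $\dim A_{e}=1$ it is proportional to $X_{G}$, exhibiting $V(X_{G})$ as the union of the $\deg X_{G}$ involution axes, in particular reducible.

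Finally (5)--(7). Smoothness of $V(F_{G})$ is classical --- a smooth conic for $\mathcal{I}$, the Klein quartic for $\mathcal{K}$, the Wiman sextic for $\mathcal{V}$ --- while smoothness of $V(\Phi_{G})$ and $V(\Psi_{G})$ is obtained by checking that the respective Jacobian ideals have empty zero locus (the appendix computations, consistent with \cite{Crass} and \cite{Elkies}). A short argument using (1), (4) and the Poincar\'e series shows that $F_{G}$ divides neither $\Phi_{G}$ nor $\Psi_{G}$, so $V(F_{G})\cap V(\Phi_{G})$ and $V(F_{G})\cap V(\Psi_{G})$ are finite; computing them shows they consist of exactly $\deg F_{G}\cdot\deg\Phi_{G}$ and $\deg F_{G}\cdot\deg\Psi_{G}$ reduced points respectively, which forces transversality in (6) and, since each of these sets is $G$-invariant with cardinality equal to the size of a single orbit in the (classically known, short) list of $G$-orbits on $\mathbb{P}^{2}$ --- for $\mathcal{V}$ and $\mathcal{K}$ these are the flexes of $V(F_{G})$ --- yields (7). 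The main obstacle is (4): once the common zero locus of the primary invariants is known to be trivial, the algebraic part of the lemma is essentially automatic, but establishing (4), along with the transversality and smoothness statements of (5) and (6), genuinely requires either the explicit elimination computations or an appeal to the classical invariant theory of $\mathfrak{A}_{6}$, $\mathfrak{A}_{5}$ and $\PSL(2,\mathbb{F}_{7})$.
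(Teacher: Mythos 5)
Your derivation of (0)--(3) is correct and genuinely different from the paper's: where the paper cites [Crass, Subsection 2E] and [Elkies, Subsection 1.2] for $\mathcal{V}$ and $\mathcal{K}$ and argues only the case $\mathcal{I}$ by hand, you deduce (1) from (4) via the homogeneous-system-of-parameters argument, get (0) and (2) from Hochster--Eagon, freeness of the invariant ring over $R=\mathbb{C}[F_G,\Phi_G,\Psi_G]$ and the Poincar\'e-series quotient $1+t^{e}$, and get (3) from the count of involutions ($45$, $15$, $21$) together with $\dim A_e=1$; this is a more self-contained route, and your restriction-to-the-conic argument for (4) in the case $\mathcal{I}$ is a nice conceptual replacement for the elimination computation. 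For the remaining instances of (4), and for (5)--(6), you fall back, as the paper does, on the explicit Jacobian/radical computations.

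However, your proof of (7) has a genuine gap. You claim that computation shows $V(F_G)\cap V(\Psi_G)$ consists of exactly $\deg F_G\cdot\deg\Psi_G$ reduced points; this is false for $G=\mathcal{V}$: by [Crass, Subsection 4B] (quoted in the paper's remark after the lemma), $V(F_{\mathcal{V}})\cap V(\Psi_{\mathcal{V}})$ is the orbit $\mathcal{O}_{90}$ of $90$ points, whereas $\deg F_{\mathcal{V}}\cdot\deg\Psi_{\mathcal{V}}=180$, so every intersection point has multiplicity $2$ and that intersection is not transverse (note the lemma asserts transversality only for $F_G$ and $\Phi_G$). Moreover, even where the cardinalities are right, the step ``a $G$-invariant set whose cardinality equals the size of a special orbit is a single orbit'' is not automatic: one must exclude unions of smaller special orbits with the same total size (e.g.\ $90=45+45$ for $\mathcal{V}$), which requires the full list of special orbit sizes for each group and a check you do not carry out. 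The paper sidesteps both issues with an intrinsic argument: writing $V(F_G)\cap V(\Phi_G)$ and $V(F_G)\cap V(\Psi_G)$ as unions of $m$ resp.\ $n$ orbits, applying Riemann--Hurwitz to $V(F_G)\to V(F_G)/G$, and bounding $\#(V(F_G)\cap V(\Phi_G))\le\deg F_G\deg\Phi_G$ and $\#(V(F_G)\cap V(\Psi_G))\le\deg F_G\deg\Psi_G$, which forces $m=n=1$ because $g(V(F_G))=10,0,3$ while $\#G=360,60,168$. To repair your argument you would need either this Riemann--Hurwitz estimate or the explicit orbit classification for each of the three groups; as written, part (7) is not established.
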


\begin{proof}
First, we have the claims (0) to (3) for $G = \mathcal{V}$ by [Crass, Subsection 2E] and for $G = \mathcal{K}$ by [Elkies, Subsection 1.2].

We show the claims (0) to (3) for $G = \mathcal{I}$.

The claim (1) follows from the fact the Jacobian of $(F_{\mathcal{I}}, \Phi_{\mathcal{I}}, \Psi_{\mathcal{I}})$ (i.e. $X_{\mathcal{I}}$) is nonzero. We checked the latter using the computer algebra system SINGULAR. (See Appendix A.2.) Moreover, from the Poincar\'{e} series, we also see the claim (0).

Any invariant homogeneous polynomial under $\pm\widetilde{\mathcal{I}}$ is contained in $\mathbb{C}[F_{\mathcal{I}}, \Phi_{\mathcal{I}}, \Psi_{\mathcal{I}}]$ by [ST, 5.1]. Since $X_{\mathcal{I}}^{2}$ is invariant under $\pm\widetilde{\mathcal{I}}$, we obtain the claim (2).

Next, we prove the claim (3). There are $15$ elements of order $2$ in $\mathcal{I}$. These elements fix pairwise distinct lines in $\mathbb{P}^{2}$. Let $C$ be the union of these $15$ lines. Then the curve $C$ is invariant under $\mathcal{I}$. On the other hand, since $F_{\mathcal{I}}$, $\Phi_{\mathcal{I}}$ and $\Psi_{\mathcal{I}}$ are of even degrees, any invariant homogeneous polynomial of degree $15$ under $\widetilde{\mathcal{I}}$ is $\alpha X_{\mathcal{I}}$ with $\alpha \in \mathbb{C}$. Therefore, the curve $V(X_{\mathcal{I}})$ is equal to $C$.

We can check the claims (4) to (6) using SINGULAR. (See Appendix A.1 to A.3.) For the claim (4), we calculate the radical of the ideal generated by $F_{G}$, $\Phi_{G}$ and $\Psi_{G}$. Then we see that it is equal to the ideal $\left< x, y, z \right>$ generated by $x$, $y$ and $z$. Hence, the equation $F_{G} = \Phi_{G} = \Psi_{G} = 0$ has only the trivial solution $(x, y, z) = (0, 0, 0)$.

For the claim (5), we can check that the radical of the ideal generated by the derivatives $\left(\dfrac{\partial F_{G}}{\partial x}\right)$, $\left(\dfrac{\partial F_{G}}{\partial y}\right)$ and $\left(\dfrac{\partial F_{G}}{\partial z}\right)$ is equal to the ideal $\left< x, y, z \right>$. Thus $V(F_{G})$ is nonsingular. In the same way, we can check that $V(\Phi_{G})$ and $V(\Psi_{G})$ are nonsingular.

For the claim (6), we compare the ideal generated by $F_{G}$ and $\Phi_{G}$ with its radical. We can check that two ideals are equal. Thus $V(F_{G})$ and $V(\Phi_{G})$ meet transversally.

Finally, we prove the claim (7). Let $C = V(F_{G})$, $D_{1} = V(\Phi_{G})$ and $D_{2} = V(\Psi_{G})$. Since the curves $C$, $D_{1}$ and $D_{2}$ are invariant under $G$, $G$ acts on each of the set $C \cap D_{1}$ and $C \cap D_{2}$. Since the set $(C \cap D_{1}) \cap (C \cap D_{2})$ is empty by (4), we may write $C \cap D_{1} = \coprod\limits_{i = 1}^{m} O_{i}$ and $C \cap D_{2} = \coprod\limits_{i = m + 1}^{m + n} O_{i}$ where $O_{1}, \cdots, O_{m + n}$ are distinct $G$-orbits. We can take a natural morphism $C \rightarrow C/G$, and recall the Hurwitz's theorem. Then the ramification index at $P \in O_{i}$ is $\left(\dfrac{\# G}{\# O_{i}} - 1\right)$ and we see
\[
\begin{array}{ccc}
\sum\limits_{i = 1}^{m} \left\{ \# O_{i} \cdot \left( \dfrac{\# G}{\# O_{i}} - 1\right) \right\} & = & m \# G - \# (C \cap D_{1}) \\
\end{array}
\]
and
\[
\begin{array}{ccc}
\sum\limits_{i = m+1}^{m+n} \left\{ \# O_{i} \cdot \left( \dfrac{\# G}{\# O_{i}} - 1\right) \right\} & = & n \# G - \# (C \cap D_{2}). \\
\end{array}
\]
Hence, we obtain the inequality
\[
\begin{array}{ccl}
2g(C) - 2 & \geq & \# G \cdot (2g(C/G) - 2)\\
&&\\
&& + (m\# G - \#(C \cap D_{1})) + (n\# G - \#(C \cap D_{2})) \\
&&\\
& \geq & (m + n - 2) \# G - \deg F_{G}(\deg \Phi_{G} + \deg \Psi_{G})
\end{array}
\]
since $g(C/G) \geq 0$, $\#(C \cap D_{1}) \leq \deg F_{G} \deg \Phi_{G}$ and $\#(C \cap D_{2}) \leq \deg F_{G} \deg \Psi_{G}$. For each group $G = \mathcal{V}$, $\mathcal{I}$ or $\mathcal{K}$, the genus $g(C)$ is $10$, $0$ or $3$.  If $m + n - 2 \geq 1$, then it is straightforward to see that the inequality does not hold. Therefore, $m = n = 1$, i.e., $C \cap D_{1}$ and $C \cap D_{2}$ are $G$-orbits.
\end{proof}

\begin{remark}
By [Crass, Subsection 4B], the set $V(F_{\mathcal{V}}) \cap V(\Phi_{\mathcal{V}})$ is a $\mathcal{V}$-orbit $\mathcal{O}_{72}$ of order $72$ and the set $V(F_{\mathcal{V}}) \cap V(\Psi_{\mathcal{V}})$ is a $\mathcal{V}$-orbit $\mathcal{O}_{90}$ of order $90$, which proves the claims (4), (6) and (7) for $G = \mathcal{V}$.
\end{remark}

Furthermore, we can drop $X_{G}$ in considering \emph{irreducible} invariant curves.

\begin{prop}\label{generators of irreducible invariants}
If $C$ is an irreducible curve invariant under $G$, then $\deg C$ is even. In particular, any integral curve invariant under $G$ is defined by a homogeneous polynomial in $\mathbb{C}[F_{G}, \Phi_{G}, \Psi_{G}]$.
\end{prop}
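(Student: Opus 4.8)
The key observation is one about parity of degrees. By Table~\ref{tab:degree} the invariants $F_{G},\Phi_{G},\Psi_{G}$ all have \emph{even} degree, whereas $X_{G}$ has \emph{odd} degree. Combining Lemma~\ref{basic prop of invariants}(0) with the relation $X_{G}^{2}\in\mathbb{C}[F_{G},\Phi_{G},\Psi_{G}]$ from Lemma~\ref{basic prop of invariants}(2), every element of $\mathbb{C}[x,y,z]^{\widetilde{G}}$ can be written, by repeatedly lowering the power of $X_{G}$, in the form $p(F_{G},\Phi_{G},\Psi_{G})+X_{G}\,q(F_{G},\Phi_{G},\Psi_{G})$. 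Expanding $p$ and $q$ into monomials in $F_{G},\Phi_{G},\Psi_{G}$, the first summand is a sum of monomials of even degree and the second a sum of monomials of odd degree. Hence, passing to homogeneous components, a homogeneous $\widetilde{G}$-invariant of odd degree is necessarily of the form $X_{G}\cdot g$ with $g$ homogeneous, while a homogeneous $\widetilde{G}$-invariant of even degree lies in $\mathbb{C}[F_{G},\Phi_{G},\Psi_{G}]$.

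Now let $C$ be an irreducible invariant curve and, using Lemma~\ref{inv of G}, let $f$ be a $\widetilde{G}$-invariant homogeneous polynomial with $C=V(f)$; put $d=\deg C$. If $d$ were odd, then by the previous paragraph $f=X_{G}\,g$ for some nonzero homogeneous $g$, so the support of $C=V(f)$ contains that of $V(X_{G})$; but $V(X_{G})$ is reducible by Lemma~\ref{basic prop of invariants}(3), so $C$ would be reducible, contradicting irreducibility. Hence $d$ is even.

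For the second statement, if $C$ is integral then it is in particular irreducible, so $d=\deg C$ is even by what we have just shown, and the $\widetilde{G}$-invariant homogeneous polynomial defining $C$ (which exists by Lemma~\ref{inv of G}), having even degree, lies in $\mathbb{C}[F_{G},\Phi_{G},\Psi_{G}]$ by the first paragraph.

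I do not anticipate a genuine difficulty here: the whole argument rests on the two ring-theoretic inputs Lemma~\ref{basic prop of invariants}(0) and (2) together with the parity pattern in Table~\ref{tab:degree}, and the only point that deserves an explicit line of justification is the normal form $f=p(F_{G},\Phi_{G},\Psi_{G})+X_{G}\,q(F_{G},\Phi_{G},\Psi_{G})$ with $X_{G}$ occurring to the first power at most, obtained by iterating the substitution of the relation $X_{G}^{2}=(\text{polynomial in }F_{G},\Phi_{G},\Psi_{G})$; after that, the even/odd degree dichotomy does all the work.
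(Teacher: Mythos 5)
Your proof is correct and follows essentially the same route as the paper: both rest on Lemma \ref{basic prop of invariants}(0) and (2) together with the parity of the degrees in Table \ref{tab:degree}, deduce that an odd-degree invariant is divisible by $X_{G}$ and hence defines a reducible curve by Lemma \ref{basic prop of invariants}(3), and then place the even-degree defining polynomial in $\mathbb{C}[F_{G},\Phi_{G},\Psi_{G}]$. The only difference is cosmetic: you make the normal form $p+X_{G}q$ explicit, where the paper simply notes that an even-degree invariant lies in $\mathbb{C}[F_{G},\Phi_{G},\Psi_{G},X_{G}^{2}]=\mathbb{C}[F_{G},\Phi_{G},\Psi_{G}]$.
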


\begin{proof}
Take a curve $C$ of degree $d$ invariant under $G$. By Lemma \ref{inv of G}, $C$ is defined by a homogeneous polynomial $H$ of degree $d$ invariant under $\widetilde{G}$. By Lemma \ref{basic prop of invariants} (0), $H$ is contained in $\mathbb{C}[F_{G}, \Phi_{G}, \Psi_{G}, X_{G}]$. We note that the degrees of $F_{G}$, $\Phi_{G}$ and $\Psi_{G}$ are even and the degree of $X_{G}$ is odd. Assume that $d$ is odd. Then $H$ is divisible by $X_{G}$. By Lemma \ref{basic prop of invariants} (3), the curve $V(H)$ is reducible. Hence, if $V(H)$ is irreducible, then $d$ is even.

Finally, suppose that $C = V(H)$ is integral. Since the degree of $H$ is even, $H$ is contained in $\mathbb{C}[F_{G}, \Phi_{G}, \Psi_{G}, X_{G}^{2}] = \mathbb{C}[F_{G}, \Phi_{G}, \Psi_{G}]$ by Lemma \ref{basic prop of invariants} (2).
\end{proof}

\section{Nonsingular curves whose automorphism groups are simple and primitive}

In this section, we consider nonsingular curves invariant under $G = \mathcal{V}$, $\mathcal{I}$ or $\mathcal{K}$. By Proposition \ref{generators of irreducible invariants}, any such curve is defined by a homogeneous polynomial in $\mathbb{C}[F_{G}, \Phi_{G}, \Psi_{G}]$.

The space of homogeneous polynomials of degree $d$ corresponds to the complete linear system of degree $d$ curves. Since a $G$-invariant curve is defined by a homogeneous polynomial invariant under $\widetilde{G}$ by Lemma \ref{semi-invariant prop}, the set of $G$-invariant curves of degree $d$ is a linear system. We denote this linear system by $(\mathfrak{d}_{G})_{d}$.

When a group $G$ is fixed, we put $a = \deg F_{G}$, $b = \deg \Phi_{G}$ and $c = \deg \Psi_{G}$. For any group $G$, we see that $a < b < c$.

\begin{remark}
By Proposition \ref{generators of irreducible invariants}, it suffices to consider the case of an even degree $d$. In this case, the linear system $(\mathfrak{d}_{G})_{d}$ corresponds to the linear space generated by $F^{i} \Phi^{j} \Psi^{k}$ with $d = ai + bj + ck$. We often identify an element of $(\mathfrak{d}_{G})_{d}$ with a homogeneous polynomial defining the curve, which can be written as a linear combination of $F^{i} \Phi^{j} \Psi^{k}$ with $d = ai + bj + ck$.
\end{remark}

In this section, we find a necessary and sufficient condition on $d$ for the existence of nonsingular elements of $(\mathfrak{d}_{G})_{d}$. First, we translate the condition to an arithmetical one.

\begin{prop}\label{condition of nonsingularity}
Let $d \geq c$. There is a nonsingular element of $(\mathfrak{d}_{G})_{d}$ if and only if all of the following hold:
\begin{enumerate}
\item[(1)] There is a pair of nonnegative integers  $(j, k)$ such that $d = bj + ck$.
\item[(2)] There is a pair of nonnegative integers  $(i, k)$ such that $d = ai + ck$.
\item[(3)] There is a pair of nonnegative integers  $(i, j)$ such that $d = ai + bj$.
\item[(4)] $d \equiv 0, a$ or $ b \mod c$.
\item[(5)] $d \equiv 0, a$ or $c \mod b$.
\item[(6)] $d \equiv 0, b$ or $c \mod a$.
\end{enumerate}
If these conditions are satisfied, then a general member of $(\mathfrak{d}_{G})_{d}$ is nonsingular.
\end{prop}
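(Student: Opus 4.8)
The plan is to analyze the linear system $(\mathfrak{d}_G)_d$ via its base locus and then apply Bertini's theorem. Recall that an element of $(\mathfrak{d}_G)_d$ is a linear combination of the monomials $F_G^i \Phi_G^j \Psi_G^k$ with $ai+bj+ck = d$. The base locus of $(\mathfrak{d}_G)_d$ is the common zero set of all such monomials, and its structure is controlled by which of $F_G, \Phi_G, \Psi_G$ are \emph{forced} to divide every element of the system. First I would establish the dictionary: no nonnegative solution to $d = bj+ck$ means $F_G \mid H$ for every $H \in (\mathfrak{d}_G)_d$ (since then every monomial $F_G^i\Phi_G^j\Psi_G^k$ of degree $d$ has $i \geq 1$); similarly for the pairs $(a,c)$ and $(a,b)$. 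So conditions (1)--(3) are exactly the conditions that none of $F_G$, $\Phi_G$, $\Psi_G$ is a common factor. If, say, $F_G$ is a common factor, then every member of $(\mathfrak{d}_G)_d$ contains the nonsingular curve $V(F_G)$ as a component, hence is singular (or at best reducible with $V(F_G)$ meeting the rest), so no nonsingular member exists --- this gives the necessity of (1)--(3). When (1)--(3) hold, I would then show the base locus of $(\mathfrak{d}_G)_d$ is contained in $V(F_G)\cap V(\Phi_G)\cap V(\Psi_G) = \emptyset$ by Lemma~\ref{basic prop of invariants}(4): a common base point of all monomials would have to lie on at least one of $V(F_G), V(\Phi_G), V(\Psi_G)$ in each monomial, but a short combinatorial argument using (1)--(3) shows it must actually lie on all three. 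Hence under (1)--(3) the system $(\mathfrak{d}_G)_d$ is base-point-free, so by Bertini a general member is nonsingular, and \emph{all} six conditions are automatically implied.

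Wait --- that cannot be right, since the Proposition asserts (4)--(6) are genuine extra conditions. So the subtlety I need to handle carefully is that even when none of $F_G, \Phi_G, \Psi_G$ divides \emph{every} member, the base locus can still be nonempty: it will be the intersection of the base loci of the three "coordinate" sub-pencils. Concretely, write the set $I_d = \{(i,j,k)\in\mathbb{Z}_{\geq 0}^3 : ai+bj+ck=d\}$. A point $P$ is a base point iff every monomial $F_G^i\Phi_G^j\Psi_G^k$, $(i,j,k)\in I_d$, vanishes at $P$. If $P \notin V(F_G)\cup V(\Phi_G)\cup V(\Psi_G)$ this fails; and $V(F_G)\cap V(\Phi_G)\cap V(\Psi_G)=\emptyset$; so $P$ lies on exactly one or exactly two of the three curves. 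The real content of (4)--(6) is to detect when the base locus meets $V(F_G)\cap V(\Phi_G)$ (or $V(F_G)\cap V(\Psi_G)$, or $V(\Phi_G)\cap V(\Psi_G)$), and then to check the singularity of a general member at such base points separately. For instance, $P\in V(F_G)\cap V(\Phi_G)$ (with $P\notin V(\Psi_G)$, forced by (4)) is a base point iff every monomial with $(i,j,k)\in I_d$ has $i\geq 1$ or $j\geq 1$, equivalently iff there is \emph{no} $k$ with $ck=d$, i.e. $d\not\equiv 0\bmod c$ --- and more refined bookkeeping of the exponents of $F_G$ and $\Phi_G$ appearing translates into the trichotomy $d\equiv 0, a,$ or $b\bmod c$. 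So I would prove: (i) the base locus of $(\mathfrak{d}_G)_d$ is a (possibly empty) union of some of the three finite $G$-orbits $V(F_G)\cap V(\Phi_G)$, $V(F_G)\cap V(\Psi_G)$, $V(\Phi_G)\cap V(\Psi_G)$; (ii) a general member is smooth away from the base locus by Bertini; (iii) at a base point $P$ of, say, the orbit $V(F_G)\cap V(\Phi_G)$, using that $V(F_G)$ and $V(\Phi_G)$ meet transversally (Lemma~\ref{basic prop of invariants}(6)) one can choose local coordinates and compute that a general member is smooth at $P$ precisely when $d\equiv 0, a$ or $b \bmod c$ --- the three residues corresponding to a monomial $\Psi_G^k$, $F_G\Psi_G^k$, or $\Phi_G\Psi_G^{k}$ being available to kill the vanishing.

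More precisely, for step (iii): near $P\in V(F_G)\cap V(\Phi_G)$, take local analytic coordinates $(u,v)$ with $V(F_G) = \{u=0\}$, $V(\Phi_G)=\{v=0\}$, and $\Psi_G$ a local unit. A general member $H = \sum c_{ijk} F_G^i\Phi_G^j\Psi_G^k$ restricted near $P$ looks like $\sum c_{ijk}\, u^i v^j \cdot(\text{unit})$. This is nonsingular at $P$ iff the "lowest-order" behavior contains a term of total degree $\leq 1$ in $(u,v)$, i.e. iff $I_d$ contains a triple with $i=j=0$ (constant term, giving a nonzero value --- but then $P$ is not a base point), or with $(i,j)=(1,0)$, or $(i,j)=(0,1)$; and one must also rule out the degenerate case where only $(i,j) = (1,1)$-or-higher triples occur. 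Writing $d = ai+bj+ck$, the existence of a triple with $(i,j)\in\{(0,0),(1,0),(0,1)\}$ is exactly $d\equiv 0, a,$ or $b\pmod c$ (given that (1)--(3) already guarantee the relevant $k$ can be taken nonnegative --- this needs $d\geq c$, which is our hypothesis). The same analysis at the other two orbits yields (5) and (6). I would then note the bookkeeping is symmetric in the three invariants, so it suffices to do it once. The final assembly: under (1)--(6) the base locus is either empty or consists of orbit(s) at which a general member is smooth, and a general member is smooth elsewhere by Bertini, so it is smooth; conversely if any of (1)--(6) fails, either a nonsingular curve is a component (failure of (1)--(3)) or every member is singular at a base point (failure of (4)--(6)).

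\textbf{Main obstacle.} The genuinely delicate part is step (iii): translating the local smoothness condition at a base point lying on two of the invariant curves into the clean congruence $d\equiv 0, a$ or $b\pmod c$, and in particular handling the borderline case where $I_d$ contains a triple with $(i,j) = (0,0)$ only at the expense of $k<0$ --- this is where the hypothesis $d\geq c$ is used, and where one must be careful that conditions (1)--(3) are exactly what is needed to supply the nonnegative exponents. One must also verify that "general member" behaves as expected simultaneously at all (finitely many) base points in the orbit; since $G$ acts transitively on each orbit and the invariants are $G$-invariant, the local picture is the same at every point of a given orbit, so a single generality condition suffices. Everything else --- the divisibility/base-component dictionary in steps (i), and Bertini in step (ii) --- is routine given Lemma~\ref{basic prop of invariants}.
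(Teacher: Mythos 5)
Your overall architecture is the same as the paper's: translate (1)--(3) into ``none of $F_G,\Phi_G,\Psi_G$ divides every member'' (whence failure forces reducible/nonreduced, hence singular, members), translate (4)--(6) into the existence of monomials $F_l^{s}F_m^{t}F_n^{u}$ with $t+u\leq 1$, use Bertini away from the base locus, and do a local check at base points lying on the pairwise intersections. The necessity arguments and the reduction of the base locus to the three pairwise intersections are fine (modulo the small remark that the case $d=c$ needs to be disposed of separately, since there ``every member divisible by $\Psi_G$'' does not yet give a singular member; the paper checks (1)--(3) directly for $d=c$).

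The genuine gap is in your step (iii). You set up local analytic coordinates $(u,v)$ with $V(F_G)=\{u=0\}$, $V(\Phi_G)=\{v=0\}$ by invoking transversality, and then assert that ``the bookkeeping is symmetric in the three invariants, so it suffices to do it once.'' But Lemma \ref{basic prop of invariants}(6) gives transversality only for the pair $(F_G,\Phi_G)$, and symmetry is in fact false: for $G=\mathcal{V}$ the set $V(F_{\mathcal{V}})\cap V(\Psi_{\mathcal{V}})$ is a single orbit of $90$ points while B\'ezout gives intersection number $6\cdot 30=180$, so these two curves meet tangentially and your adapted coordinate system does not exist at those base points. So the local criterion as you formulated it (``smooth iff a term of total degree $\leq 1$ in $(u,v)$ appears'') is not justified at the two orbits involving $\Psi_G$. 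The repair is to drop coordinates altogether, which is what the paper does: if (4)--(6) supply a member of the form $F_l^{d/a_l}$, it does not pass through $P$ (by Lemma \ref{basic prop of invariants}(4)), and if they supply $E=F_pF_l^{(d-a_p)/a_l}$ with $p\in\{m,n\}$, then $\partial E/\partial x_r(P)=(\partial F_p/\partial x_r)(P)\,F_l(P)^{(d-a_p)/a_l}\neq 0$ for some $r$, using only that $V(F_p)$ is nonsingular and $F_l(P)\neq 0$; hence a general member is nonsingular at $P$ (or misses $P$). Transversality of $V(F_G)$ and $V(\Phi_G)$ is needed only later, in the Section 4 analysis of singularity types, not for this proposition.
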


To prove this proposition, we show that each of the conditions (1) to (6) is equivalent to a certain condition on the singularity of a general element of the linear system.

\begin{note}
In this section, when $G$ is fixed, we define $F_{1} := F_{G}$, $F_{2} := \Phi_{G}$ and $F_{3} := \Psi_{G}$.  Then take $a_{n} := \deg F_{n}$ for $n = 1, 2$ and $3$, i.e., $a_{1} = a$, $a_{2} = b$ and $a_{3} = c$.
\end{note}

\begin{lemma}\label{reducible condition}
Let $d$ be a positive integer and take indices $l$, $m$ and $n$ such that $\{ l, m, n\} = \{1, 2, 3\}$. There exists an element of $(\mathfrak{d}_{G})_{d}$ which is not divisible by $F_{l}$ if and only if there is a pair of nonnegative integers $(s, t)$ such that $d = a_{m} s + a_{n} t$.
\end{lemma}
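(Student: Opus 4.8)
The statement is an "if and only if" about when the linear system $(\mathfrak{d}_G)_d$ contains an element not divisible by a fixed generator $F_l$. The plan is to use the explicit description of $(\mathfrak{d}_G)_d$ coming from Lemma \ref{basic prop of invariants}(0): since $d$ is even (Proposition \ref{generators of irreducible invariants}), the space of $\widetilde G$-invariant homogeneous polynomials of degree $d$ is spanned by the monomials $F_G^i \Phi_G^j \Psi_G^k$ with $ai + bj + ck = d$, i.e. by $F_1^{e_1} F_2^{e_2} F_3^{e_3}$ with $a_1 e_1 + a_2 e_2 + a_3 e_3 = d$. A general linear combination of these monomials is divisible by $F_l$ if and only if \emph{every} monomial in the span is divisible by $F_l$, i.e. if and only if every solution $(e_1,e_2,e_3)$ in nonnegative integers of $a_1 e_1 + a_2 e_2 + a_3 e_3 = d$ has $e_l \geq 1$. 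Contrapositively, $(\mathfrak{d}_G)_d$ contains an element not divisible by $F_l$ if and only if there is a nonnegative solution with $e_l = 0$, which is exactly the existence of nonnegative integers $(s,t) = (e_m, e_n)$ with $a_m s + a_n t = d$.

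First I would record the claim that a $\mathbb{C}$-linear combination $H = \sum c_{e_1 e_2 e_3} F_1^{e_1} F_2^{e_2} F_3^{e_3}$ is divisible by $F_l$ precisely when $F_l \mid F_1^{e_1}F_2^{e_2}F_3^{e_3}$ for each monomial with $c_{e_1e_2e_3}\neq 0$. One direction is trivial; for the other, since $F_1, F_2, F_3$ are algebraically independent (Lemma \ref{basic prop of invariants}(1)) the monomials $F_1^{e_1}F_2^{e_2}F_3^{e_3}$ are $\mathbb{C}$-linearly independent in $\mathbb{C}[x,y,z]$, and $F_l$ is irreducible (its zero locus $V(F_l)$ is nonsingular and hence irreducible by Lemma \ref{basic prop of invariants}(5), so the reduced polynomial $F_l$ is irreducible, and one checks $F_l$ is not a proper power by comparing degrees — all three have $V(F_G)$ smooth, so in particular reduced). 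Writing $H = F_l \cdot K$ and expanding $K$ in the same monomial basis, comparing coefficients of monomials \emph{not} divisible by $F_l$ forces those coefficients of $H$ to vanish; here I use that $F_1,F_2,F_3$ generate a polynomial subring, so divisibility by $F_l$ inside $\mathbb{C}[F_1,F_2,F_3]$ matches divisibility of exponents.

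Granting the monomial-divisibility claim, the lemma follows immediately: "some element of $(\mathfrak{d}_G)_d$ is not divisible by $F_l$" is equivalent to "not every monomial $F_1^{e_1}F_2^{e_2}F_3^{e_3}$ of degree $d$ is divisible by $F_l$", i.e. to the existence of such a monomial with $e_l = 0$, i.e. to $a_m s + a_n t = d$ having a solution in nonnegative integers $(s,t)$. I would also note the boundary remark that when no monomial of degree $d$ exists at all (the linear system is empty), the statement holds vacuously on both sides, but since the conventions of the section presuppose $(\mathfrak{d}_G)_d \neq \emptyset$ or one can restrict to $d$ expressible as $a i + b j + c k$, this case causes no trouble.

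The main obstacle is the irreducibility of $F_l$ used in the monomial-divisibility claim: one must be slightly careful that $F_l$ is not merely irreducible-up-to-power but an irreducible polynomial, which follows from $V(F_l)$ being a smooth (hence reduced and irreducible) plane curve via Lemma \ref{basic prop of invariants}(5). Once that is in hand the argument is purely formal manipulation inside the polynomial ring $\mathbb{C}[F_1,F_2,F_3]$, using algebraic independence to pass between polynomial identities in $x,y,z$ and identities among exponent vectors.
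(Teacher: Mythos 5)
Your proposal is correct and takes essentially the same route as the paper: identify $(\mathfrak{d}_{G})_{d}$ with the span of the monomials $F_{1}^{e_{1}}F_{2}^{e_{2}}F_{3}^{e_{3}}$ with $a_{1}e_{1}+a_{2}e_{2}+a_{3}e_{3}=d$, so an element not divisible by $F_{l}$ exists precisely when some monomial has $e_{l}=0$, i.e.\ when $d=a_{m}s+a_{n}t$ is solvable (the paper just exhibits $F_{m}^{s}F_{n}^{t}$ and asserts it is not divisible by $F_{l}$). The one step to tighten is your passage from divisibility in $\mathbb{C}[x,y,z]$ to divisibility of exponents in $\mathbb{C}[F_{1},F_{2},F_{3}]$ — irreducibility of $F_{l}$ plus algebraic independence alone do not exclude, say, $F_{l}\mid F_{m}$ in $\mathbb{C}[x,y,z]$ (compare $x,\,xy,\,z$) — so either observe that the quotient by $F_{l}$ is $\widetilde{G}$-invariant of even degree and hence lies in $\mathbb{C}[F_{G},\Phi_{G},\Psi_{G}]$ by Lemma \ref{basic prop of invariants} (0),(2), or argue geometrically from Lemma \ref{basic prop of invariants} (4),(5) and B\'ezout.
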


\begin{proof}
First, we clearly see the assumption if $(\mathfrak{d}_{G})_{d}$ is empty.

Assume that there exists an element of $(\mathfrak{d}_{G})_{d}$ which is not divisible by $F_{l}$. Then $F_{m}^{s}F_{n}^{t}F_{l}^{0} = F_{m}^{s}F_{n}^{t}$ belongs to $(\mathfrak{d}_{G})_{d}$ for some $s, t \geq 0$. Thus there is a pair of nonnegative integers $(s, t)$ with $a_{m}s + a_{n}t = d$. Conversely, if a pair of nonnegative integers $(s_{0}, t_{0})$ satisfies $d = a_{m} s_{0} + a_{n} t_{0}$, then $F_{m}^{s_{0}} F_{n}^{t_{0}}$ belongs to $(\mathfrak{d}_{G})_{d}$, and is not divisible by $F_{l}$.
\end{proof}

\begin{lemma}\label{intersection base pts condition}
Take indices $l, m$ and $n$ such that $\{ l, m, n\} = \{ 1, 2, 3\}$ and let $d \geq \max \{a_{m}, a_{n}\}$ be an integer. Then the linear system $(\mathfrak{d}_{G})_{d}$ has an element of the form $F_{m}^{s} F_{n}^{t} F_{l}^{u}$ satisfying $s+t \leq 1$ if and only if $d \equiv 0$, $a_{m}$ or $a_{n} \mod a_{l}$.
\end{lemma}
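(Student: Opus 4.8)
The plan is to prove both directions by directly analyzing the monomials $F_m^s F_n^t F_l^u$ with $s+t\le 1$ that have the right degree, and matching this with the congruence condition modulo $a_l$. First I would handle the trivial case where $(\mathfrak{d}_G)_d$ is empty, as in the previous lemma; then assume $d\ge\max\{a_m,a_n\}$ and that $(\mathfrak{d}_G)_d\ne\emptyset$. For the forward implication, suppose $(\mathfrak{d}_G)_d$ contains an element of the form $F_m^s F_n^t F_l^u$ with $s+t\le 1$, so that $d = a_m s + a_n t + a_l u$ with $(s,t)\in\{(0,0),(1,0),(0,1)\}$. Reducing modulo $a_l$ gives immediately $d\equiv 0$, $a_m$ or $a_n \pmod{a_l}$, which is the stated congruence.

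For the converse, suppose $d\equiv 0$, $a_m$ or $a_n \pmod{a_l}$. In each of the three cases I want to produce the required monomial. If $d\equiv 0\pmod{a_l}$, then since $d\ge 0$ we may write $d = a_l u$ for some nonnegative integer $u$, and $F_l^u \in (\mathfrak{d}_G)_d$ with $s=t=0$. If $d\equiv a_m\pmod{a_l}$, then $d - a_m \equiv 0\pmod{a_l}$; here I use the hypothesis $d\ge\max\{a_m,a_n\}\ge a_m$ to conclude $d - a_m\ge 0$, so $d - a_m = a_l u$ for some $u\ge 0$, giving the monomial $F_m F_l^u\in(\mathfrak{d}_G)_d$ with $s=1$, $t=0$. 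The case $d\equiv a_n\pmod{a_l}$ is symmetric, using $d\ge a_n$ to get $F_n F_l^u\in(\mathfrak{d}_G)_d$ with $s=0$, $t=1$. In all cases the resulting monomial is a $\widetilde G$-invariant homogeneous polynomial of degree $d$, hence defines an element of $(\mathfrak{d}_G)_d$, completing the proof.

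The argument is essentially bookkeeping, so there is no serious obstacle; the only point that needs care is the role of the hypothesis $d\ge\max\{a_m,a_n\}$, which is exactly what guarantees that the exponent $u$ coming out of the congruence is nonnegative rather than merely an integer. One should also note that the three cases $d\equiv 0, a_m, a_n\pmod{a_l}$ need not be mutually exclusive (and $a_m, a_n$ need not be distinct mod $a_l$), but this causes no trouble since we only need the existence of one suitable monomial.
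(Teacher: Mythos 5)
Your proposal is correct and follows essentially the same argument as the paper: reduce the degree equation $d = a_m s + a_n t + a_l u$ modulo $a_l$ for the forward direction, and for the converse exhibit the monomials $F_l^{d/a_l}$, $F_m F_l^u$ or $F_n F_l^u$, using $d \geq \max\{a_m, a_n\}$ to ensure $u \geq 0$. The extra remarks about the empty case and non-exclusive congruence cases are harmless additions not needed in the paper's proof.
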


\begin{proof}
First, assume that an element of the form $F_{m}^{s} F_{n}^{t} F_{l}^{u}$ satisfying $s+t \leq 1$ is in $(\mathfrak{d}_{G})_{d}$. Since $a_{m}s + a_{n}t + a_{l}u = d$, we see that $d \equiv a_{m}s + a_{n}t \mod a_{l}$. Therefore, $d \equiv 0$, $a_{m}$ or $a_{n} \mod a_{l}$

Conversely, if $d$ is divisible by $a_{l}$, then $(\mathfrak{d}_{G})_{d}$ clearly has an element of the form $F_{l}^{\frac{d}{a_{l}}}$. If $d$ satisfies the condition (i) or (ii), then there is an integer $u$ such that $d = a_{l} u + a_{m}$ or $d = a_{l} u + a_{n}$ for each case, and $u \geq 0$ since $d \geq \max\{a_{m}, a_{n}\}$. Thus an element of the form $F_{m} F_{l}^{u}$ or $F_{n} F_{l}^{u}$ is contained in $(\mathfrak{d}_{G})_{d}$ for such $u$.
\end{proof}

By Lemma \ref{reducible condition}, we can translate the conditions (1), (2) and (3) into conditions on the base points of $(\mathfrak{d}_{G})_{d}$. Moreover, by Lemma \ref{intersection base pts condition}, we can also translate the conditions (4), (5) and (6). Now we can prove Proposition \ref{condition of nonsingularity}.

\begin{proof}(The proof of Proposition \ref{condition of nonsingularity}.)

First, we assume that one of the conditions (1) to (6) does not hold. Suppose that $(\mathfrak{d}_{G})_{d}$ is nonempty and show that any element of $(\mathfrak{d}_{G})_{d}$ is singular. (If it is empty, then there is no nonsingular element.)

If $d = c$, then it is easy to see the conditions (1) to (3) by calculation for each group. Suppose that $d > c$. By Lemma \ref{reducible condition}, if $d$ does not satisfy one of the conditions (1), (2) and (3), then any element of $(\mathfrak{d}_{G})_{d}$ is divisible by $F_{G}, \Phi_{G}$ or $\Psi_{G}$. Since $d > c$, any element of $(\mathfrak{d}_{G})_{d}$ is reducible or nonreduced, hence it's singular.

Assume that one of the conditions (4), (5) and (6) does not hold. Then we can take a set of indices $\{l, m, n\} = \{1, 2, 3\}$ such that $d$ does not satisfy $d \equiv 0, a_{m}$ or $a_{n} \mod a_{l}$. Let $x_{1} = x$, $x_{2} = y$ and $x_{3} = z$. The homogeneous polynomial $H$ is a linear combination of
\[
T_{s, t, u} := F_{l}^{s} F_{m}^{t} F_{n}^{u}
\]
with $d = a_{l} s + a_{m} t + a_{n} u$. The derivative of $T_{s, t, u}$ is
\[
\dfrac{\partial}{\partial x_{r}} T_{s, t, u} = s \left(\dfrac{\partial F_{l}}{\partial x_{r}}\right) T_{s-1, t, u} + t \left(\dfrac{\partial F_{m}}{\partial x_{r}}\right) T_{s, t-1, u} + u \left(\dfrac{\partial F_{n}}{\partial x_{r}}\right) T_{s, t, u-1}
\]
for $r = 1, 2$ and $3$. By the assumption on $d$ and Lemma \ref{intersection base pts condition}, $H$ is a linear combination of  $T_{s, t, u}$ with $t + u \geq 2$. It is easy to see that $H$ and its derivatives $\displaystyle{\frac{\partial}{\partial x_{r}}} H$ vanishes on $V(F_{m}) \cap V(F_{n})$. Therefore, the curve $V(H)$ is singular at any point in $V(F_{m}) \cap V(F_{n})$.

Now we assume that all of the conditions (1) to (6) hold and show that a general element of $(\mathfrak{d}_{G})_{d}$ is nonsingular. By Bertini's theorem, a general member of the linear system $(\mathfrak{d}_{G})_{d}$ can have singular points only at the base points. We will prove that it is also nonsingular there.

By the conditions (1) to (3) and Lemma \ref{reducible condition}, the base locus $\Lambda$ satisfies
\[
\begin{array}{ccc}
\Lambda & \subset & (V(\Phi_{G}) \cup V(\Psi_{G})) \cap (V(F_{G}) \cup V(\Psi_{G})) \cap (V(F_{G}) \cup V(\Phi_{G})) \\
& = & (V(F_{G}) \cap V(\Phi_{G})) \cup (V(F_{G}) \cap V(\Psi_{G})) \cup (V(\Phi_{G}) \cap V(\Psi_{G})).
\end{array}
\]
Thus take any pair $\{m, n\}$ and any point $P \in V(F_{m}) \cap V(F_{n})$, and we show that a general element $H$ of $(\mathfrak{d}_{G})_{d}$ either does not certain $P$ or is nonsingular at $P$.

Let $l$ be an index satisfying $\{l, m, n\} = \{1, 2, 3\}$. By Lemma \ref{intersection base pts condition} and the conditions (4), (5) and (6), $(\mathfrak{d}_{G})_{d}$ has an element of the form $E := F_{l}^{s}F_{m}^{t}F_{n}^{u}$ with $t + u \leq 1$. If $t + u = 0$, i.e., an element of the form $F_{l}^{\frac{d}{a_{l}}}$ is contained in $(\mathfrak{d}_{G})_{d}$, then a general member is nonsingular does not pass through $P$ since $V(F_{G}) \cap V(\Phi_{G}) \cap V(\Psi_{G}) = \emptyset$.

Suppose that $t + u = 1$. Let $p$ be $m$ or $n$ according as $t = 1$ or $u = 1$. The derivative of $E$ is
\[
\frac{\partial}{\partial x_{r}} E = \left(\frac{\partial F_{p}}{\partial x_{r}}\right) F_{l}^{\frac{d-a_{p}}{a_{l}}} + \frac{d - a_{p}}{a_{l}} \left(\frac{\partial F_{l}}{\partial x_{r}}\right) F_{p}F_{l}^{\frac{d-a_{p}}{a_{l}}-1}
\]
for $r = 1, 2$ and $3$. By Lemma \ref{basic prop of invariants} (4), $F_{l}(P)$ is nonzero. Moreover, since $V(F_{p})$ is nonsingular by Lemma \ref{basic prop of invariants} (5), there is an index $r$ such that $\dfrac{\partial F_{p}}{\partial x_{r}}$ is nonzero at $P$. Hence, the derivative $\displaystyle{\frac{\partial}{\partial x_{r}}} E$ does not vanish at $P$ for an index $r$. Therefore, a general element is nonsingular at $P$.
\end{proof}

From the proof, we also see the next lemma, which will be used in Section 4.

\begin{lemma}\label{singular condition on F and Phi}
Let $d \geq c$. If the conditions (1), (2), (3), (5) and (6) hold of Proposition \ref{condition of nonsingularity}, then a general member of $(\mathfrak{d}_{G})_{d}$ can be singular only at the any point in $V(F_{G}) \cap V(\Phi_{G})$.
\end{lemma}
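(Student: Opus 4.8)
The plan is to revisit the proof of Proposition \ref{condition of nonsingularity}, retaining only the parts that do not invoke condition (4). First I would observe that conditions (1), (2), (3) alone, via Lemma \ref{reducible condition}, force the base locus $\Lambda$ of $(\mathfrak{d}_{G})_{d}$ to satisfy
\[
\Lambda \subset (V(F_{G}) \cap V(\Phi_{G})) \cup (V(F_{G}) \cap V(\Psi_{G})) \cup (V(\Phi_{G}) \cap V(\Psi_{G})),
\]
exactly as in that proof (this step never uses condition (4)). By Bertini's theorem a general member of $(\mathfrak{d}_{G})_{d}$ can be singular only at points of $\Lambda$, so it suffices to show that a general member is nonsingular at (or simply misses) every point of the two intersections $V(F_{G}) \cap V(\Psi_{G})$ and $V(\Phi_{G}) \cap V(\Psi_{G})$; the remaining pairwise intersection $V(F_{G}) \cap V(\Phi_{G})$ is then the only place a singularity can occur.

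For the second step I would apply verbatim the local argument from the proof of Proposition \ref{condition of nonsingularity}, but only to the pairs $\{m,n\} = \{1,3\}$ and $\{m,n\} = \{2,3\}$, with the convention $F_{1} = F_{G}$, $F_{2} = \Phi_{G}$, $F_{3} = \Psi_{G}$ and degrees $a_{1} = a$, $a_{2} = b$, $a_{3} = c$. Given $P \in V(F_{m}) \cap V(F_{n})$ with $n = 3$, let $l$ be the remaining index, so $l = 2$ when $m = 1$ and $l = 1$ when $m = 2$. Then condition (5) (for $l = 2$, which reads $d \equiv 0, a, c \bmod b$, i.e.\ $d \equiv 0, a_{1}, a_{3} \bmod a_{2}$) or condition (6) (for $l = 1$), together with Lemma \ref{intersection base pts condition}, produces an element $E = F_{l}^{s} F_{m}^{t} F_{n}^{u} \in (\mathfrak{d}_{G})_{d}$ with $t + u \leq 1$. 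If $t + u = 0$ then $F_{l}^{d/a_{l}} \in (\mathfrak{d}_{G})_{d}$, and since $V(F_{G}) \cap V(\Phi_{G}) \cap V(\Psi_{G}) = \emptyset$ by Lemma \ref{basic prop of invariants} (4), a general member does not pass through $P$. If $t + u = 1$, the derivative computation in that proof --- using $F_{l}(P) \neq 0$ from Lemma \ref{basic prop of invariants} (4) and the nonsingularity of the relevant $V(F_{p})$ from Lemma \ref{basic prop of invariants} (5) --- exhibits an index $r$ with $\partial E / \partial x_{r}$ nonzero at $P$, so a general member is nonsingular at $P$.

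Combining the two steps, a general member of $(\mathfrak{d}_{G})_{d}$ has no singular point outside $\Lambda$, and is nonsingular at every point of $V(F_{G}) \cap V(\Psi_{G})$ and of $V(\Phi_{G}) \cap V(\Psi_{G})$; hence its singular locus is contained in $V(F_{G}) \cap V(\Phi_{G})$, which is the assertion. I expect no genuine obstacle here: the statement is a bookkeeping refinement of Proposition \ref{condition of nonsingularity}, and the only point demanding care is to verify that in that proof condition (4) is used \emph{solely} to treat the pair $\{m,n\} = \{1,2\}$ (the case $l = 3$ in Lemma \ref{intersection base pts condition}), so that omitting it leaves exactly the intersection $V(F_{G}) \cap V(\Phi_{G})$ unaccounted for.
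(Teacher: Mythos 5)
Your proposal is correct and matches the paper's intent exactly: the paper derives this lemma directly from the proof of Proposition \ref{condition of nonsingularity}, observing (as you do) that condition (4) is used only to handle points of $V(F_{G}) \cap V(\Phi_{G})$, while conditions (1)--(3) confine the base locus to the pairwise intersections and conditions (5)--(6) give nonsingularity at $V(F_{G}) \cap V(\Psi_{G})$ and $V(\Phi_{G}) \cap V(\Psi_{G})$.
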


Fix $G = \mathcal{V}$, $\mathcal{I}$ or $\mathcal{K}$. From the explicit values of $a$, $b$ and $c$, we obtain a necessary and sufficient condition on $d \geq c$ for which there exists a nonsingular element of $(\mathfrak{d}_{G})_{d}$ invariant under $G$ by Proposition \ref{condition of nonsingularity}. In the following subsections, we look at the result for each group $G$ in detail.

\subsection{The Valentiner group $\mathcal{V}$}

In this subsection, we find degrees $d$ of nonsingular curves whose automorphism groups are the Valentiner group $\mathcal{V}$. We note that $d$ must be a multiple of $6$ for a $\mathcal{V}$-invariant irreducible curve of degree $d$ to exist. We first consider the case $d \leq 30$.

\begin{note}
Let $f_{1}, \cdots, f_{r}$ be homogeneous polynomials of the same degree. We write the linear system generated by $f_{1}, \cdots, f_{r}$ as $\left< f_{1}, \cdots, f_{r} \right>$.
\end{note}

\begin{lemma}\label{lowdeg nonsingularity of V}
If $d$ is $18$ or $24$, then any element of $(\mathfrak{d}_{\mathcal{V}})_{d}$ is reducible or nonreduced.
\end{lemma}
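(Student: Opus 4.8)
The plan is to enumerate the monomials $F_{\mathcal{V}}^i \Phi_{\mathcal{V}}^j \Psi_{\mathcal{V}}^k$ of the given degree and show that no linear combination gives an irreducible reduced curve. Recall $a = \deg F_{\mathcal{V}} = 6$, $b = \deg \Phi_{\mathcal{V}} = 12$, $c = \deg \Psi_{\mathcal{V}} = 30$. For $d = 18$, the solutions of $6i + 12j + 30k = 18$ in nonnegative integers are $(i,j,k) = (3,0,0)$ and $(1,1,0)$, so $(\mathfrak{d}_{\mathcal{V}})_{18} = \langle F_{\mathcal{V}}^3,\ F_{\mathcal{V}}\Phi_{\mathcal{V}}\rangle$. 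For $d = 24$, the solutions of $6i + 12j + 30k = 24$ are $(4,0,0)$, $(2,1,0)$ and $(0,2,0)$, so $(\mathfrak{d}_{\mathcal{V}})_{24} = \langle F_{\mathcal{V}}^4,\ F_{\mathcal{V}}^2\Phi_{\mathcal{V}},\ \Phi_{\mathcal{V}}^2\rangle$.

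First I would treat $d = 18$: every element is $F_{\mathcal{V}}(\alpha F_{\mathcal{V}}^2 + \beta \Phi_{\mathcal{V}})$, which is visibly divisible by $F_{\mathcal{V}}$, hence reducible (if $\alpha F_{\mathcal{V}}^2 + \beta\Phi_{\mathcal{V}}$ is not a scalar multiple of a power of $F_{\mathcal{V}}$) or nonreduced (if it is, which happens precisely when $\beta = 0$). In either case the curve $V(H)$ is reducible or nonreduced. Next, for $d = 24$, a general element is $\alpha F_{\mathcal{V}}^4 + \beta F_{\mathcal{V}}^2 \Phi_{\mathcal{V}} + \gamma \Phi_{\mathcal{V}}^2$. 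The key observation is that this is a binary quadratic form in $F_{\mathcal{V}}^2$ and $\Phi_{\mathcal{V}}$: writing $H = \gamma(\Phi_{\mathcal{V}} - \lambda_1 F_{\mathcal{V}}^2)(\Phi_{\mathcal{V}} - \lambda_2 F_{\mathcal{V}}^2)$ when $\gamma \neq 0$ (with $\lambda_1, \lambda_2$ the roots of $\gamma \lambda^2 + \beta\lambda + \alpha$), or $H = F_{\mathcal{V}}^2(\alpha F_{\mathcal{V}}^2 + \beta\Phi_{\mathcal{V}})$ when $\gamma = 0$. In the latter case $H$ is divisible by $F_{\mathcal{V}}^2$, so $V(H)$ is nonreduced. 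In the former case, each factor $\Phi_{\mathcal{V}} - \lambda_i F_{\mathcal{V}}^2$ is a homogeneous polynomial of degree $12$ and $H$ factors as a product of two such; thus $V(H)$ is reducible (a union of two degree-$12$ curves, or has a repeated component when $\lambda_1 = \lambda_2$). Either way $V(H)$ is reducible or nonreduced.

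I expect the main (minor) obstacle to be pinning down the exact monomial list — i.e., verifying by the elementary Diophantine computation that there are no further representations of $18$ and $24$ as $6i + 12j + 30k$ — and being careful that in the degenerate sub-cases ($\beta = 0$ for $d=18$; $\gamma = 0$, or $\lambda_1 = \lambda_2$, for $d = 24$) the conclusion still reads "reducible or nonreduced" rather than "irreducible with a nonreduced structure." Since $F_{\mathcal{V}}$ and $\Phi_{\mathcal{V}}$ are algebraically independent (Lemma \ref{basic prop of invariants} (1)), $\Phi_{\mathcal{V}} - \lambda F_{\mathcal{V}}^2$ is never the zero polynomial and never a scalar times a power of $F_{\mathcal{V}}$, so the factorizations above are genuine. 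No appeal to Bertini or to the orbit structure is needed here; the statement is purely about the structure of the linear system, which is why it deserves a separate lemma before the congruence analysis of Proposition \ref{condition of nonsingularity} is invoked for the remaining degrees.
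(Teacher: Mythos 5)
Your proposal is correct and follows essentially the same route as the paper: compute the explicit bases $\langle F_{\mathcal{V}}^3, F_{\mathcal{V}}\Phi_{\mathcal{V}}\rangle$ and $\langle \Phi_{\mathcal{V}}^2, F_{\mathcal{V}}^2\Phi_{\mathcal{V}}, F_{\mathcal{V}}^4\rangle$, note divisibility by $F_{\mathcal{V}}$ in degree $18$, and factor the degree-$24$ elements as a quadratic form in $\Phi_{\mathcal{V}}$ and $F_{\mathcal{V}}^2$. Your extra attention to the degenerate subcases ($\beta=0$, $\gamma=0$, $\lambda_1=\lambda_2$) only makes explicit what the paper leaves implicit, so nothing further is needed.
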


\begin{proof}
We have
\[
(\mathfrak{d}_{\mathcal{V}})_{18} = \left< F_{\mathcal{V}}^{3}, F_{\mathcal{V}}\Phi_{\mathcal{V}} \right>
\]
and
\[
(\mathfrak{d}_{\mathcal{V}})_{24} = \left< \Phi_{\mathcal{V}}^{2}, F_{\mathcal{V}}^{2}\Phi_{\mathcal{V}}, F_{\mathcal{V}}^{4} \right>.
\]
We see that any element of $(\mathfrak{d}_{\mathcal{V}})_{18}$ is divisible by $F_{\mathcal{V}}$. On the other hand, a general element of $(\mathfrak{d}_{\mathcal{V}})_{24}$ is defined by a homogeneous polynomial which can be factorized as
\[
\alpha (\Phi - \sigma_{1}F_{\mathcal{V}}^{2})(\Phi - \sigma_{2}F_{\mathcal{V}}^{2}).
\]
Therefore, any element of $(\mathfrak{d}_{\mathcal{V}})_{24}$ is reducible.
\end{proof}

\begin{theorem}\label{nonsingularity of V}
There exists a nonsingular projective plane curve of degree $d$ invariant under the Valentiner group $\mathcal{V}$ if and only if $d \equiv 0, 6$ or $12 \mod 30$.

Furthermore, if $C$ is a nonsingular projective plane curve whose automorphism group contains the Valentiner group $\mathcal{V}$, then the automorphism group of $C$ is $\mathcal{V}$.
\end{theorem}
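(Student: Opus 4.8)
The plan is to prove the equivalence by combining the already-established machinery with a short arithmetic analysis, handling the range $d < 30$ by hand and the range $d \geq 30 = c$ via Proposition~\ref{condition of nonsingularity}, and then to deduce the statement on $\Aut C$ from the maximality of $\mathcal{V}$ among finite primitive subgroups.

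For $d < 30$, note first that by Lemma~\ref{inv of G} any $\mathcal{V}$-invariant curve is defined by a $\widetilde{\mathcal{V}}$-invariant form, and since the generating invariants have degrees $6, 12, 30, 45$ (Table~\ref{tab:degree}), the system $(\mathfrak{d}_{\mathcal{V}})_{d}$ is nonempty only when $6 \mid d$, i.e. $d \in \{6, 12, 18, 24\}$. For $d = 6$ and $d = 12$ the curves $V(F_{\mathcal{V}})$ and $V(\Phi_{\mathcal{V}})$ are nonsingular by Lemma~\ref{basic prop of invariants}(5), and $6 \equiv 6$, $12 \equiv 12 \mod 30$; for $d = 18$ and $d = 24$, Lemma~\ref{lowdeg nonsingularity of V} shows every element of $(\mathfrak{d}_{\mathcal{V}})_{d}$ is reducible or nonreduced, hence singular, while $18, 24 \not\equiv 0, 6, 12 \mod 30$; and for all other $d < 30$ the system is empty and $d \not\equiv 0, 6, 12 \mod 30$. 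This disposes of $d < 30$.

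For $d \geq 30$, I would apply Proposition~\ref{condition of nonsingularity} with $(a, b, c) = (6, 12, 30)$ and check that conditions (1)--(6) hold precisely when $d \equiv 0, 6$ or $12 \mod 30$. The computation is routine: condition (6) becomes $d \equiv 0, 12, 30 \mod 6$ and condition (5) becomes $d \equiv 0, 6, 30 \mod 12$, while conditions (2) and (3) ask that $d$ be expressible as $6i + 30k$ and as $6i + 12j$ with nonnegative integers; all four of these are equivalent to $6 \mid d$. Condition (1) asks $d = 12j + 30k$ with $j, k \geq 0$, which for $d \geq 30$ is again equivalent to $6 \mid d$ (a small Frobenius check, the only nonobvious point being that $d = 18$ — which is $< 30$ and already handled — is the last multiple of $6$ that fails). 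Condition (4) reads exactly $d \equiv 0, 6$ or $12 \mod 30$, and since this already forces $6 \mid d$, the conjunction of (1)--(6) is equivalent to (4); thus Proposition~\ref{condition of nonsingularity} produces a nonsingular (indeed general) member precisely in the stated residue classes. Putting the two ranges together gives the equivalence; the only place needing care is the bookkeeping between the low-degree exceptions and the hypothesis $d \geq c$ of Proposition~\ref{condition of nonsingularity}.

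For the final assertion, suppose $C$ is nonsingular with $\mathcal{V} \subseteq \Aut C$. Then $C$ is $\mathcal{V}$-invariant, so by Lemma~\ref{inv of G} its degree is at least $\deg F_{\mathcal{V}} = 6 > 4$; hence $\Aut C$ embeds into $\PGL(3, \mathbb{C})$ by the extension property recalled in the introduction, and it is finite since $C$ has genus $\geq 3$. Therefore $\Aut C$ is a finite subgroup of $\PGL(3, \mathbb{C})$ containing the finite primitive group $\mathcal{V}$, hence is itself finite primitive; by Remark~\ref{rem of primitive} the group $\mathcal{V}$ is maximal among such groups, so $\Aut C = \mathcal{V}$.
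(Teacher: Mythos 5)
Your proposal is correct and follows essentially the same route as the paper: low degrees handled directly via Lemma~\ref{lowdeg nonsingularity of V} and the nonsingularity of $V(F_{\mathcal{V}})$, $V(\Phi_{\mathcal{V}})$, $V(\Psi_{\mathcal{V}})$, degrees $d \geq 30$ via the arithmetic of conditions (1)--(6) in Proposition~\ref{condition of nonsingularity}, and the last claim via the maximality of $\mathcal{V}$ from Remark~\ref{rem of primitive}. Your extra justification that $\Aut C$ is a finite subgroup of $\PGL(3,\mathbb{C})$ (degree $\geq 6 > 4$, unique extension of automorphisms, Hurwitz finiteness) only makes explicit what the paper leaves implicit.
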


\begin{proof}
For $d = 6$, $12$ and $30$, we have the nonsingular curves $V(F_{\mathcal{V}})$, $V(\Phi_{\mathcal{V}})$ and $V(\Psi_{\mathcal{V}})$ by Lemma \ref{basic prop of invariants} (5). By Lemma \ref{lowdeg nonsingularity of V}, the theorem is satisfied for $d \leq 30$.

Let $d > 30$. By Proposition \ref{condition of nonsingularity}, there is a nonsingular element of $(\mathfrak{d}_{\mathcal{V}})_{d}$ if and only if all of the following hold:
\begin{enumerate}
\item[(1)] There is a pair of nonnegative integers  $(j, k)$ such that $d = 12j + 30k$.
\item[(2)] There is a pair of nonnegative integers  $(i, k)$ such that $d = 6i + 30k$.
\item[(3)] There is a pair of nonnegative integers  $(i, j)$ such that $d = 6i + 12j$.
\item[(4)] $d \equiv 0, 6$ or $12 \mod 30$.
\item[(5)] $d \equiv 0, 6$ or $30 \mod 12$.
\item[(6)] $d \equiv 0, 12$ or $30 \mod 6$.
\end{enumerate}
Then the condition (4) is exactly the arithmetic condition we are considering, and so we have $d \equiv 0, 6$ or $12 \mod 30$ if there is a nonsingular element of $(\mathfrak{d}_{\mathcal{V}})_{d}$.

Assume the condition (4). Then we show that the conditions (1), (2), (3), (5) and (6) hold and hence that there is a nonsingular element of $(\mathfrak{d}_{\mathcal{V}})_{d}$. Each of the conditions (2), (3), (5) and (6) is equivalent to the condition that $d$ is divisible by $6$, and so is satisfied. Thus it suffices to show that the condition (4) implies (1).

If $d \equiv 0 \mod 30$, then there is a positive integer $k_{0}$ such that $d = 30 k_{0}$. If $d \equiv 12 \mod 30$, then there is a positive integer $k_{1}$ such that $d = 12 \cdot 1 + 30 k_{1}$. If $d \equiv 6 \mod 30$, then $k_{2} := \displaystyle{\frac{d - 36}{30}}$ is a nonnegative integer and $d$ satisfies $d = 12\cdot 3 + 30 k_{2}$. Therefore, the condition (1) also holds.

Finally, we assume that $C$ be a nonsingular projective plane curve of degree $d$ invariant under $\mathcal{V}$. Then $\mathcal{V}$ is a subgroup of the automorphism group $\Aut C$. By Remark \ref{rem of primitive}, $\Aut C$ is equal to $\mathcal{V}$.
\end{proof}

\subsection{The icosahedral group $\mathcal{I}$}

In this subsection, we find the necessary and sufficient condition on $d$ for the existence of a nonsingular curve of degree $d$ invariant under $\mathcal{I}$. By Proposition \ref{generators of irreducible invariants} and Table \ref{tab:degree}, we note that $d$ must be even for an $\mathcal{I}$-invariant irreducible curve of degree $d$ to exist. First, we look at the case of low degrees. (For $d = 14$, we will use in Section 4.)

\begin{lemma}\label{reducible of I}
\begin{enumerate}
\item[(1)] Any element of $(\mathfrak{d}_{\mathcal{I}})_{4}$ is nonreduced.
\item[(2)] If $d = 8$ or $14$, then any element of $(\mathfrak{d}_{\mathcal{I}})_{d}$ is reducible or nonreduced.
\end{enumerate}
\end{lemma}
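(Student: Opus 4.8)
The plan is to work out each linear system $(\mathfrak{d}_{\mathcal{I}})_d$ explicitly for $d = 4, 8, 14$ using the generators $F_{\mathcal{I}}$ (degree $2$), $\Phi_{\mathcal{I}}$ (degree $6$), $\Psi_{\mathcal{I}}$ (degree $10$) from Lemma~\ref{basic prop of invariants}~(0), and then in each case show that every element factors nontrivially or is a perfect power. For $d = 4$, the only monomial in $F_{\mathcal{I}}, \Phi_{\mathcal{I}}, \Psi_{\mathcal{I}}$ of degree $4$ is $F_{\mathcal{I}}^2$, so $(\mathfrak{d}_{\mathcal{I}})_4 = \left< F_{\mathcal{I}}^2 \right>$ and every element is $\alpha F_{\mathcal{I}}^2$, which is nonreduced; this settles (1) immediately.

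For $d = 8$, the relevant monomials are $F_{\mathcal{I}}^4$ and $F_{\mathcal{I}}\Phi_{\mathcal{I}}$ (there is no degree-$8$ monomial involving $\Psi_{\mathcal{I}}$), so $(\mathfrak{d}_{\mathcal{I}})_8 = \left< F_{\mathcal{I}}^4, F_{\mathcal{I}}\Phi_{\mathcal{I}} \right>$, and every element is $F_{\mathcal{I}}(\alpha F_{\mathcal{I}}^3 + \beta \Phi_{\mathcal{I}})$, hence divisible by $F_{\mathcal{I}}$ and therefore reducible or nonreduced. For $d = 14$, the degree-$14$ monomials are $F_{\mathcal{I}}^7$, $F_{\mathcal{I}}^4\Phi_{\mathcal{I}}$, $F_{\mathcal{I}}\Phi_{\mathcal{I}}^2$ and $F_{\mathcal{I}}^2\Psi_{\mathcal{I}}$; every one of these is divisible by $F_{\mathcal{I}}$, so $(\mathfrak{d}_{\mathcal{I}})_{14} = F_{\mathcal{I}} \cdot \left< F_{\mathcal{I}}^6, F_{\mathcal{I}}^3\Phi_{\mathcal{I}}, \Phi_{\mathcal{I}}^2, F_{\mathcal{I}}\Psi_{\mathcal{I}} \right>$, and again every element is divisible by $F_{\mathcal{I}}$, hence reducible or nonreduced. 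This completes (2).

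The only point requiring care is verifying that these short lists of monomials really exhaust $(\mathfrak{d}_{\mathcal{I}})_d$; this follows from Lemma~\ref{basic prop of invariants}~(0) together with Proposition~\ref{generators of irreducible invariants} (so that $X_{\mathcal{I}}$ plays no role), and from the numerology $d = 2i + 6j + 10k$ with $i, j, k \geq 0$, which one simply enumerates. There is essentially no obstacle here: the degrees $4, 8, 14$ are small enough that the decompositions are forced, and the key structural observation in all three cases is simply that every admissible exponent triple has $i \geq 1$, so $F_{\mathcal{I}}$ divides every invariant of that degree. The slightly less trivial case $d = 8$ still only needs the remark that a product $F_{\mathcal{I}} \cdot (\text{linear form in } F_{\mathcal{I}}^3, \Phi_{\mathcal{I}})$ of positive degree in both factors is never irreducible and never reduced unless it is a constant times $F_{\mathcal{I}}^4$ (still nonreduced) — in every case the curve is reducible or nonreduced.
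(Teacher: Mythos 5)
Your proposal is correct and follows essentially the same route as the paper: list the monomials $F_{\mathcal{I}}^{i}\Phi_{\mathcal{I}}^{j}\Psi_{\mathcal{I}}^{k}$ spanning $(\mathfrak{d}_{\mathcal{I}})_{d}$ for $d=4,8,14$ and observe that every element is a multiple of $F_{\mathcal{I}}^{2}$ (for $d=4$) or divisible by $F_{\mathcal{I}}$ (for $d=8,14$). One small remark: the exclusion of $X_{\mathcal{I}}$ needs no appeal to Proposition~\ref{generators of irreducible invariants}; it is immediate because $\deg X_{\mathcal{I}}=15>14\geq d$, so no monomial involving $X_{\mathcal{I}}$ can occur.
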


\begin{proof}
We have
\[
(\mathfrak{d}_{\mathcal{I}})_{4} = \left< F_{\mathcal{I}}^{2} \right>,
\]
\[
(\mathfrak{d}_{\mathcal{I}})_{8} = \left< F_{\mathcal{I}}\Phi_{\mathcal{I}}, F_{\mathcal{I}}^{4} \right>
\]
and
\[
(\mathfrak{d}_{\mathcal{I}})_{14} = \left< F_{\mathcal{I}}^{2}\Psi_{\mathcal{I}}, F_{\mathcal{I}}\Phi_{\mathcal{I}}^{2}, F_{\mathcal{I}}^{4}\Phi_{\mathcal{I}}, F_{\mathcal{I}}^{7} \right>.
\]
For $d = 8$ or $14$, we clearly see that any element of $(\mathfrak{d}_{\mathcal{I}})_{d}$ is divisible by $F_{\mathcal{I}}$. Thus the claims holds.
\end{proof}

Using Proposition \ref{condition of nonsingularity}, we show the following theorem:

\begin{theorem}\label{nonsingularity of I}
There exists a nonsingular projective plane curve of degree $d$ invariant under the icosahedral group $\mathcal{I}$ if and only if $d \equiv 0, 2$ or $6 \mod 10$.
\end{theorem}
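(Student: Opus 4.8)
\emph{Proof plan.} The plan is to run exactly the argument already used for the Valentiner group in Theorem~\ref{nonsingularity of V}, now with $a=\deg F_{\mathcal{I}}=2$, $b=\deg\Phi_{\mathcal{I}}=6$ and $c=\deg\Psi_{\mathcal{I}}=10$. Since every nonsingular plane curve is integral, Proposition~\ref{generators of irreducible invariants} shows that no nonsingular $\mathcal{I}$-invariant curve can have odd degree; as $0,2,6$ are all even, this simultaneously settles every odd $d$ in agreement with the asserted congruence. It then remains to treat even $d$, and I would split this into the range $d\ge c=10$, where Proposition~\ref{condition of nonsingularity} applies verbatim, and the finitely many small even degrees $d=2,4,6,8$.

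For $d\ge 10$ I would substitute $(a,b,c)=(2,6,10)$ into the six conditions of Proposition~\ref{condition of nonsingularity}. Conditions (2), (3), (5), (6) each collapse to the single requirement that $d$ be even: for (2) and (3) take $k=0$ or $j=0$ and $i=d/2$; condition (5) becomes $d\equiv 0,2$ or $4\pmod 6$ and condition (6) becomes $d\equiv 0\pmod 2$. Condition (4) is precisely $d\equiv 0,2$ or $6\pmod{10}$, which already forces $d$ even, so the only remaining point is that (4) implies (1), i.e.\ that every $d\ge 10$ with $d\equiv 0,2,6\pmod{10}$ is a nonnegative combination $d=6j+10k$. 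This is the one genuine arithmetic step, and it is short: write $d=10k$ when $d\equiv 0$, $d=6\cdot 2+10k$ when $d\equiv 2$ (using $d\ge 12$ there), and $d=6\cdot 1+10k$ when $d\equiv 6$ (using $d\ge 16$ there). Hence, for $d\ge 10$, all six conditions hold if and only if $d\equiv 0,2$ or $6\pmod{10}$, in which case a general member of $(\mathfrak{d}_{\mathcal{I}})_d$ is nonsingular by Proposition~\ref{condition of nonsingularity}; and if none of $0,2,6$ is attained mod $10$ then condition (4) fails, so Proposition~\ref{condition of nonsingularity} forbids a nonsingular element.

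For the small even degrees I would argue directly: when $d=2$ the conic $V(F_{\mathcal{I}})$ and when $d=6$ the sextic $V(\Phi_{\mathcal{I}})$ are nonsingular by Lemma~\ref{basic prop of invariants}(5), matching $d\equiv 2$ and $d\equiv 6\pmod{10}$ (the case $d=10$ is already handled above, with $V(\Psi_{\mathcal{I}})$ as an explicit witness). When $d=4$ or $d=8$, Lemma~\ref{reducible of I} shows that every element of $(\mathfrak{d}_{\mathcal{I}})_d$ is reducible or nonreduced, hence singular, matching $4,8\not\equiv 0,2,6\pmod{10}$. Combining the two ranges yields the stated equivalence.

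The argument is essentially bookkeeping once Propositions~\ref{generators of irreducible invariants} and \ref{condition of nonsingularity} and Lemmas~\ref{basic prop of invariants} and \ref{reducible of I} are in hand, so I do not expect a serious obstacle. The one place that needs care is the interface between Proposition~\ref{condition of nonsingularity}, which only speaks for $d\ge c=10$, and the degrees below $c$: I would check that the small even degrees at which a nonsingular invariant curve fails to exist for a reason \emph{not} recorded by the congruence in Proposition~\ref{condition of nonsingularity}(4), namely $d=4$ and $d=8$, are exactly the even degrees less than $10$ that are not congruent to $0,2$ or $6$ modulo $10$, so that Lemma~\ref{reducible of I} plugs precisely this gap; a secondary point is simply to double-check the correct specialization of the six congruences when $(a,b,c)=(2,6,10)$.
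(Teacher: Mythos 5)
Your proposal is correct and follows essentially the same route as the paper: explicit nonsingular witnesses $V(F_{\mathcal{I}})$, $V(\Phi_{\mathcal{I}})$, $V(\Psi_{\mathcal{I}})$ together with Lemma~\ref{reducible of I} for $d\le 10$, and Proposition~\ref{condition of nonsingularity} with $(a,b,c)=(2,6,10)$ for $d\ge 10$, where the only nontrivial check is that condition (4) implies (1) via $d=10k$, $d=6+10k$, or $d=6\cdot 2+10k$. The arithmetic details match the paper's proof, so there is nothing to add.
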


\begin{proof}
For $d = 2$, $6$ and $10$, we have the nonsingular curves $V(F_{\mathcal{I}})$, $V(\Phi_{\mathcal{I}})$ and $V(\Psi_{\mathcal{I}})$ by Lemma \ref{basic prop of invariants} (5). By Lemma \ref{reducible of I}, this theorem holds for $d \leq 10$.

Assume that $d \geq 10$. By Proposition \ref{condition of nonsingularity}, there is a nonsingular element of $(\mathfrak{d}_{\mathcal{I}})_{d}$ if and only if all of the following hold:
\begin{enumerate}
\item[(1)] There is a pair of nonnegative integers  $(j, k)$ such that $d = 6j + 10k$.
\item[(2)] There is a pair of nonnegative integers  $(i, k)$ such that $d = 2i + 10k$.
\item[(3)] There is a pair of nonnegative integers  $(i, j)$ such that $d = 2i + 6j$.
\item[(4)] $d \equiv 0, 2, 6 \mod 10$.
\item[(5)] $d \equiv 0, 2, 10 \mod 6$.
\item[(6)] $d \equiv 0, 6, 10 \mod 2$.
\end{enumerate}
Then the condition (4) is exactly the arithmetic condition we are considering. Hence, if there is a nonsingular element of $(\mathfrak{d}_{\mathcal{I}})_{d}$, then $d$ satisfies that $d \equiv 0$, $2$ or $6\mod 10$.

Assume the condition (4). Then we show that the other conditions (1), (2), (3), (5) and (6) holds. If $d$ is even, then $d$ clearly satisfies the conditions (2), (3), (5) and (6). We need to show the condition (1).

If $d \equiv 0 \mod 10$, then there is a positive integer $k_{0}$ such that $d = 10 k_{0}$. If $d \equiv 6 \mod 10$, then there is a nonnegative integer $k_{1}$ such that $d = 6 + 10 k_{1}$. If $d \equiv 2 \mod 10$, then $k_{2} := \displaystyle{\frac{d - 12}{10}}$ is a nonnegative integer and $d$ satisfies that $d \equiv 6 \cdot 2 + 10 k_{2}$. Therefore, the condition (1) holds.
\end{proof}

\begin{remark}
By Remark \ref{rem of primitive}, the icosahedral group $\mathcal{I}$ is not a maximal finite primitive subgroup of $\PGL(3, \mathbb{C})$. Therefore, the automorphism group $\Aut C$ of a nonsingular curve invariant under $\mathcal{I}$ can be bigger than $\mathcal{I}$. In particular, since $\mathcal{K}$, $H_{216}$ and its subgroup of order $36$ and $72$ do not contain a group which is conjugate to $\mathcal{I}$, $\Aut C$ is $\mathcal{I}$ or is conjugate to $\mathcal{V}$.
\end{remark}

\subsection{The Klein group $\mathcal{K}$}

In this subsection, we consider the case of the Klein group $G = \mathcal{K}$. By Proposition \ref{generators of irreducible invariants} and Table \ref{tab:degree}, we note that $d$ must be even for a $\mathcal{K}$-invariant irreducible curve of degree $d$ to exist. First, we obtain the following lemma for low degrees. (For $d = 16$ and $22$, we will use in Section 4.)

\begin{lemma}\label{lowdeg of K}
\begin{enumerate}
\item[(1)] $(\mathfrak{d}_{\mathcal{K}})_{2}$ is empty.
\item[(2)] Any element of $(\mathfrak{d}_{\mathcal{K}})_{8}$ is nonreduced.
\item[(3)] If $d$ is $10$, $16$ or $22$, then any element of $(\mathfrak{d}_{\mathcal{K}})_{d}$ is reducible or nonreduced.
\item[(4)] Any element of $(\mathfrak{d}_{\mathcal{K}})_{12}$ is singular.
\end{enumerate}
\end{lemma}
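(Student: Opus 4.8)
The plan is to compute each of the linear systems $(\mathfrak{d}_{\mathcal{K}})_{d}$ explicitly and then read off the stated property. Since every degree occurring in the statement ($2, 8, 10, 12, 16, 22$) is even, Proposition \ref{generators of irreducible invariants} (equivalently, the Remark following the definition of $(\mathfrak{d}_{G})_{d}$) reduces $(\mathfrak{d}_{\mathcal{K}})_{d}$ to the linear span of the monomials $F_{\mathcal{K}}^{i}\Phi_{\mathcal{K}}^{j}\Psi_{\mathcal{K}}^{k}$ with $4i+6j+14k=d$, using $\deg F_{\mathcal{K}}=4$, $\deg\Phi_{\mathcal{K}}=6$, $\deg\Psi_{\mathcal{K}}=14$ from Table \ref{tab:degree}. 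So the first step, for each relevant $d$, is the elementary enumeration of the nonnegative integer solutions of $4i+6j+14k=d$.

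Carrying this out gives: for $d=2$ there is no solution, so $(\mathfrak{d}_{\mathcal{K}})_{2}=\emptyset$, which is (1); for $d=8$ the only solution is $(i,j,k)=(2,0,0)$, so every element of $(\mathfrak{d}_{\mathcal{K}})_{8}$ is a scalar multiple of $F_{\mathcal{K}}^{2}$ and hence nonreduced, which is (2). For $d=10,16,22$ one checks that in every monomial solution the exponent of $F_{\mathcal{K}}$ is at least $1$; explicitly
\[
(\mathfrak{d}_{\mathcal{K}})_{10}=\langle F_{\mathcal{K}}\Phi_{\mathcal{K}}\rangle,\quad
(\mathfrak{d}_{\mathcal{K}})_{16}=\langle F_{\mathcal{K}}^{4},\,F_{\mathcal{K}}\Phi_{\mathcal{K}}^{2}\rangle,\quad
(\mathfrak{d}_{\mathcal{K}})_{22}=\langle F_{\mathcal{K}}^{4}\Phi_{\mathcal{K}},\,F_{\mathcal{K}}^{2}\Psi_{\mathcal{K}},\,F_{\mathcal{K}}\Phi_{\mathcal{K}}^{3}\rangle.
\]
Thus every element of $(\mathfrak{d}_{\mathcal{K}})_{d}$ factors as $F_{\mathcal{K}}$ times a homogeneous polynomial of positive degree (since $d>4=\deg F_{\mathcal{K}}$), so the corresponding curve is reducible or nonreduced, which is (3).

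For (4) the same enumeration yields $(\mathfrak{d}_{\mathcal{K}})_{12}=\langle F_{\mathcal{K}}^{3},\,\Phi_{\mathcal{K}}^{2}\rangle$, so a general element is $H=\alpha F_{\mathcal{K}}^{3}+\beta\Phi_{\mathcal{K}}^{2}$ with $(\alpha,\beta)\neq(0,0)$. If $\alpha\beta=0$ then $H$ is a scalar multiple of $F_{\mathcal{K}}^{3}$ or of $\Phi_{\mathcal{K}}^{2}$, hence nonreduced and therefore singular. If $\alpha\beta\neq 0$, choose any point $P\in V(F_{\mathcal{K}})\cap V(\Phi_{\mathcal{K}})$; this intersection is nonempty by Bézout's theorem (or by Lemma \ref{basic prop of invariants} (7), where it is identified with a $G$-orbit). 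Then $F_{\mathcal{K}}(P)=\Phi_{\mathcal{K}}(P)=0$, so $H(P)=0$, and since
\[
\frac{\partial H}{\partial x_{r}}=3\alpha F_{\mathcal{K}}^{2}\,\frac{\partial F_{\mathcal{K}}}{\partial x_{r}}+2\beta\,\Phi_{\mathcal{K}}\,\frac{\partial\Phi_{\mathcal{K}}}{\partial x_{r}}\qquad(r=1,2,3),
\]
each first partial derivative of $H$ also vanishes at $P$. Hence $V(H)$ is singular at $P$. In all cases every element of $(\mathfrak{d}_{\mathcal{K}})_{12}$ is singular, which is (4).

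There is no serious obstacle here: the entire content is the bookkeeping of four small linear Diophantine problems, together with the observation in (4) that a binomial $\alpha F_{\mathcal{K}}^{3}+\beta\Phi_{\mathcal{K}}^{2}$ automatically acquires singular points along the common zero locus $V(F_{\mathcal{K}})\cap V(\Phi_{\mathcal{K}})$, because there both $F_{\mathcal{K}}^{3}$ and $\Phi_{\mathcal{K}}^{2}$ vanish to order at least two. The only input requiring a quoted fact is the non-emptiness of $V(F_{\mathcal{K}})\cap V(\Phi_{\mathcal{K}})$, which is immediate.
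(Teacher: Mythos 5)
Your proposal is correct and follows essentially the same route as the paper: enumerate the monomials $F_{\mathcal{K}}^{i}\Phi_{\mathcal{K}}^{j}\Psi_{\mathcal{K}}^{k}$ spanning each low-degree linear system, observe divisibility by $F_{\mathcal{K}}$ for $d=10,16,22$ (the paper phrases this via Lemma \ref{reducible condition}, you enumerate directly), and note that any combination of $F_{\mathcal{K}}^{3}$ and $\Phi_{\mathcal{K}}^{2}$ is singular along $V(F_{\mathcal{K}})\cap V(\Phi_{\mathcal{K}})$ for $d=12$; your explicit derivative computation merely spells out what the paper leaves implicit.
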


\begin{proof}
First, since the minimal degree of $\mathcal{K}$-invariant homogeneous polynomials is $4$, the claim (1) clearly holds.

We prove the claim (2). Suppose that $d = 8$. The linear system $(\mathfrak{d}_{\mathcal{K}})_{8}$ is generated only by $F_{\mathcal{K}}^{2}$. Thus any element of $(\mathfrak{d}_{\mathcal{K}})_{8}$ is nonreduced.

Next, let $d$ be $10$, $16$ or $22$. Then there is no pair of nonnegative integers $(j, k)$ such that $d = 6j + 14k$. By Lemma \ref{reducible condition}, any element of $(\mathfrak{d}_{\mathcal{K}})_{d}$ is divisible by $F_{\mathcal{K}}$. In particular, it is reducible or nonreduced, and the claim (3) holds.

Finally, we show the claim (4). We can see
\[
(\mathfrak{d}_{\mathcal{K}})_{12} = \left< \Phi_{\mathcal{K}}^{2}, F_{\mathcal{K}}^{3} \right>.
\] 
Thus its element is singular on $V(F_{\mathcal{K}}) \cap V(\Phi_{\mathcal{K}})$.
\end{proof}

From Lemma \ref{lowdeg of K} and Proposition \ref{condition of nonsingularity}, we show the following theorem for the Klein group.

\begin{theorem}\label{nonsingularity of K}
Let $d$ be a positive integer. There exists a nonsingular projective plane curve of degree $d$ invariant under the Klein group $\mathcal{K}$ if and only if $d \equiv 0, 4$ or $6 \mod 14$.

Moreover, if $C$ is a nonsingular projective plane curve whose automorphism group contains the Klein group $\mathcal{K}$, then the automorphism group of $C$ is $\mathcal{K}$.
\end{theorem}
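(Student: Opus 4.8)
The plan is to mimic the proof of Theorem~\ref{nonsingularity of V}: dispatch the small degrees by hand and, for $d \geq c = 14$, reduce everything to elementary arithmetic via Proposition~\ref{condition of nonsingularity} applied with $(a,b,c) = (\deg F_{\mathcal{K}}, \deg \Phi_{\mathcal{K}}, \deg \Psi_{\mathcal{K}}) = (4,6,14)$.

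First I would settle $d \leq 14$. For odd $d$ there is no integral invariant curve at all by Proposition~\ref{generators of irreducible invariants}, hence no nonsingular one since any nonsingular plane curve is integral; and no odd integer is congruent to $0$, $4$ or $6$ modulo $14$. For even $d \leq 14$: the curves $V(F_{\mathcal{K}})$, $V(\Phi_{\mathcal{K}})$ and $V(\Psi_{\mathcal{K}})$ of degrees $4$, $6$ and $14$ are nonsingular by Lemma~\ref{basic prop of invariants}(5), while $d = 2, 8, 10, 12$ are exactly the cases of Lemma~\ref{lowdeg of K}, which shows that every member of $(\mathfrak{d}_{\mathcal{K}})_{d}$ is empty, nonreduced, reducible or singular; and none of $2, 8, 10, 12$ is $\equiv 0, 4$ or $6 \pmod{14}$. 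So the statement holds for $d \leq 14$.

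For $d > 14$ I would invoke Proposition~\ref{condition of nonsingularity}. With $a = 4$, $b = 6$, $c = 14$ its six conditions become: (1) $d = 6j + 14k$; (2) $d = 4i + 14k$; (3) $d = 4i + 6j$; (4) $d \equiv 0, 4$ or $6 \pmod{14}$; (5) $d \equiv 0, 2$ or $4 \pmod 6$; (6) $d \equiv 0$ or $2 \pmod 4$, with nonnegative integers throughout. Condition (4) is exactly the asserted congruence, which gives the ``only if'' direction. For the converse, assume (4). Then $d$ is even, so (5) and (6) hold; since $d \geq 16$ we have $d/2 \geq 8$, and every integer $\geq 2$ (resp.\ $\geq 6$) is a nonnegative integer combination of $2$ and $3$ (resp.\ of $2$ and $7$), which yields (3) and (2) after dividing by $2$. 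For (1) I would split on the residue of $d$ modulo $14$: take $(j,k) = (0, d/14)$ if $d \equiv 0$, take $(j,k) = (1, (d-6)/14)$ if $d \equiv 6$, and take $(j,k) = (3, (d-18)/14)$ if $d \equiv 4$, the last being legitimate precisely because $d > 14$ forces $d \geq 18$ in that residue class. Hence all of (1)--(6) hold, and Proposition~\ref{condition of nonsingularity} delivers a nonsingular (indeed general) member of $(\mathfrak{d}_{\mathcal{K}})_{d}$.

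For the final assertion, suppose $C$ is a nonsingular plane curve with $\mathcal{K} \subseteq \Aut C$; then $C$ is $\mathcal{K}$-invariant, so by the part just proved $\deg C \geq 4$, whence $\Aut C$ is (isomorphic to) a subgroup of $\PGL(3,\mathbb{C})$ containing the finite primitive group $\mathcal{K}$. By Remark~\ref{rem of primitive} such a group is itself finite primitive, and since $\mathcal{K}$ is maximal among finite primitive subgroups of $\PGL(3,\mathbb{C})$, we conclude $\Aut C = \mathcal{K}$. The only step that requires genuine attention is the verification of condition (1) in the residue class $d \equiv 4 \pmod{14}$: one must use that $d = 4$ itself lies outside the range $d > 14$, so that $(d-18)/14$ is a nonnegative integer. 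Everything else is bookkeeping, the substantive work having already been done in Proposition~\ref{condition of nonsingularity} and Lemma~\ref{lowdeg of K}.
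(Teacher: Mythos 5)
Your proposal is correct and follows essentially the same route as the paper: small degrees via Lemma \ref{basic prop of invariants}(5) and Lemma \ref{lowdeg of K}, the range $d > 14$ via Proposition \ref{condition of nonsingularity} with $(a,b,c) = (4,6,14)$ (including the same choice $(j,k) = (3,(d-18)/14)$ for $d \equiv 4 \bmod 14$), and the final assertion via the maximality of $\mathcal{K}$ in Remark \ref{rem of primitive}. The only cosmetic difference is that you verify conditions (2) and (3) by a numerical-semigroup observation where the paper writes down explicit pairs, which changes nothing substantive.
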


\begin{proof}
For $d = 4$, $6$ and $14$, we have the nonsingular curves $V(F_{\mathcal{K}})$, $V(\Phi_{\mathcal{K}})$ and $V(\Psi_{\mathcal{K}})$ by Lemma \ref{basic prop of invariants} (5). By Lemma \ref{lowdeg of K}, the claim holds for $d \leq 14$.

Let $d > 14$. By Proposition \ref{condition of nonsingularity}, there exists a nonsingular curve of degree $d$ invariant under $\mathcal{K}$ if and only if $d$ satisfies the following conditions:
\begin{enumerate}
\item[(1)] There is a pair of nonnegative integers  $(j, k)$ such that $d = 6j + 14k$.
\item[(2)] There is a pair of nonnegative integers  $(i, k)$ such that $d = 4i + 14k$.
\item[(3)] There is a pair of nonnegative integers  $(i, j)$ such that $d = 4i + 6j$.
\item[(4)] $d \equiv 0, 4, 6 \mod 14$.
\item[(5)] $d \equiv 0, 4, 14 \mod 6$.
\item[(6)] $d \equiv 0, 6, 14 \mod 4$.
\end{enumerate}
Then the condition (4) is exactly the arithmetic condition we are considering. If there is a nonsingular element of $(\mathfrak{d}_{\mathcal{K}})_{d}$, then the condition (4) holds.

Assume the condition (4). We show that the other conditions (1), (2), (3), (5) and (6) hold. First, we easily see that the conditions (3), (5) and (6) hold for even $d \geq 4$.

If $d \equiv 0$ or $6 \mod 14$, then there clearly exists a pair of nonnegative integers $(j, k)$ with $d = 6j + 14k$. If $d \equiv 4 \mod 14$, then $k_{1} := \displaystyle{\frac{d - 18}{14}}$ is a nonnegative integer and $d$ satisfies $d = 3 \cdot 6 + 14 k_{1}$. Thus the condition (1) holds.

If $d \equiv 0$ or $4 \mod 14$, then there clearly exists a pair of nonnegative integers $(i, k)$ such that $d = 4i + 14k$. If $d \equiv 6 \mod 14$, then $k_{2} := \displaystyle{\frac{d - 20}{14}}$ is a nonnegative integer and $d$ satisfies $d = 5 \cdot 4 + 14k_{2}$. Hence, the condition (2) holds.

Finally, we assume that $C$ is a nonsingular projective plane curve invariant under the Klein group $\mathcal{K}$. Then $\mathcal{K}$ is a subgroup of $\Aut C$. By Remark \ref{rem of primitive}, we have $\Aut C = \mathcal{K}$.
\end{proof}

\section{Integral curves invariant under a simple primitive group}

In this section, we find integral (i.e. irreducible and reduced) curves invariant under $G = \mathcal{V}, \mathcal{I}$ or $\mathcal{K}$. By Theorem \ref{nonsingularity of V}, Theorem \ref{nonsingularity of I} and Theorem \ref{nonsingularity of K}, we know in which degree there is a nonsingular curve invariant under $G$. Such a curve is in particular integral. Thus we have only to consider the case where $(\mathfrak{d}_{G})_{d}$ is nonempty and any member of $(\mathfrak{d}_{G})_{d}$ is singular: Such a degree $d$ satisfies
\[
d \equiv
\left\{
\begin{array}{ll}
18 \mbox{{\ }or } 24 \mod 30 & \mbox{for } G = \mathcal{V}, \\
4 \mbox{{\ }or } 8 \mod 10 & \mbox{for } G = \mathcal{I}, \\
2, 8, 10 \mbox{{\ }or } 12 \mod 14 \mbox{ } (d \geq  8) & \mbox{for } G = \mathcal{K}. \\
\end{array}
\right.
\]

First, we study the type of singularities of a general invariant curve of a given degree.

\begin{note}
When positive integers $a, b$ and $c$ are fixed (i.e. we fix the group $G$), we define the set
\[
I_{d} := \left\{(i, j) \middle| i, j \geq 0 \mbox{ and } \frac{d - (ai + bj)}{c} \mbox{ is a nonnegative integer.} \right\}
\]
for a positive integer $d$.
\end{note}

\begin{lemma}\label{lemma of sing type}
Let $P$ be a point in $V(F_{G}) \cap V(\Phi_{G})$. Take a general element of $(\mathfrak{d}_{G})_{d}$ defined by a homogeneous polynomial
\[
H(x, y, z) = \sum_{(i, j) \in I_{d}} c_{i j} F_{G}(x, y, z)^{i} \Phi_{G}(x, y, z)^{j} \Psi_{G}(x, y, z)^{\frac{d-(ai+bj)}{c}}.
\]
Then  the singularity of the curve $V(H)$ at the point $P$ is analytically equivalent to the singularity of the formal curve $V(h) \subset \mathbb{A}^{2}$ at the origin where the power series $h \in \mathbb{C}[\![s, t]\!]$ is defined by
\[
h(s, t) = \sum_{(i, j) \in I_{d}} c_{i j} \lambda_{i j}(s, t) s^{i} t^{j}
\]
where $\lambda_{i j}$ is an analytic function such that $\lambda_{i j}(0, 0) \neq 0$.
\end{lemma}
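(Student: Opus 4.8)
The plan is to use that $V(F_G)$ and $V(\Phi_G)$ meet transversally at $P$ (Lemma \ref{basic prop of invariants} (6)) in order to take $F_G$ and $\Phi_G$ as a local analytic coordinate system at $P$, and to use that $\Psi_G$ does not vanish at $P$ (Lemma \ref{basic prop of invariants} (4)) so that every power of $\Psi_G$ occurring in $H$ is a local unit. These two facts together should turn $H$ into the stated normal form after an analytic coordinate change.

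Concretely, I would first fix a linear form $\ell$ with $\ell(P)\neq 0$ and pass to the affine chart $\{\ell\neq 0\}$, setting $f:=F_G/\ell^{a}$, $\varphi:=\Phi_G/\ell^{b}$ and $\psi:=\Psi_G/\ell^{c}$, so that $H/\ell^{d}=\sum_{(i,j)\in I_d}c_{ij}\,f^{i}\varphi^{j}\psi^{(d-(ai+bj))/c}$ and $V(H)$ agrees with $V(H/\ell^{d})$ near $P$. Since $P\in V(F_G)\cap V(\Phi_G)$ we have $f(P)=\varphi(P)=0$, and transversality says that $df|_P$ and $d\varphi|_P$ are linearly independent in the cotangent space at $P$; hence $(f,\varphi)$ is a regular system of parameters of $\mathcal{O}_{\mathbb{P}^2,P}$, and by the analytic inverse function theorem (equivalently, by passing to completions, $\widehat{\mathcal{O}}_{\mathbb{P}^2,P}\cong\mathbb{C}[\![f,\varphi]\!]$) there is an analytic isomorphism from a neighbourhood of $P$ onto a neighbourhood of the origin of $\mathbb{A}^2$ carrying $f$ to $s$ and $\varphi$ to $t$. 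Next, by Lemma \ref{basic prop of invariants} (4), $\psi(P)\neq 0$, so $\psi$ and hence each $\psi^{(d-(ai+bj))/c}$ are units in $\mathcal{O}_{\mathbb{P}^2,P}$; under the coordinate change the image $\lambda_{ij}(s,t)$ of $\psi^{(d-(ai+bj))/c}$ is a power-series (analytic) unit, i.e. $\lambda_{ij}(0,0)\neq 0$, and $H/\ell^{d}$ is carried to $h(s,t)=\sum_{(i,j)\in I_d}c_{ij}\,\lambda_{ij}(s,t)\,s^{i}t^{j}$. The isomorphism then identifies the germ of $V(H)$ at $P$ with the germ of $V(h)$ at the origin, which is the asserted analytic equivalence; this also correctly covers the degenerate case $P\notin V(H)$ (which happens when $c\mid d$ and $c_{00}\neq 0$), since then $h(0,0)=c_{00}\lambda_{00}(0,0)\neq 0$ and both germs are empty.

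I do not anticipate a real obstacle here: the whole content sits in Lemma \ref{basic prop of invariants} (6) (to produce the local coordinates) and Lemma \ref{basic prop of invariants} (4) (to make $\Psi_G$ a unit), and the word ``general'' in the statement is not actually used — the argument works verbatim for every choice of the coefficients $c_{ij}$. The only point that requires a little care is to record that the functions $\lambda_{ij}$ depend only on $\psi$ and on the fixed coordinate change, and not on the $c_{ij}$, so that the resulting formula for $h$ genuinely describes how the singularity type of $V(H)$ at $P$ varies with the coefficients; this is exactly what is needed for the later analysis of a general member of $(\mathfrak{d}_G)_d$.
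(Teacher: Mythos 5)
Your proposal is correct and follows essentially the same route as the paper: use the transversality of $V(F_{G})$ and $V(\Phi_{G})$ at $P$ (Lemma \ref{basic prop of invariants} (6)) to make the dehomogenized $F_{G}$, $\Phi_{G}$ an analytic coordinate system, and the nonvanishing of $\Psi_{G}$ at $P$ (Lemma \ref{basic prop of invariants} (4)) to absorb its powers into units $\lambda_{ij}$. The paper dehomogenizes by a coordinate $x_{r}$ rather than a general linear form $\ell$, which is an immaterial difference; your added remarks (that ``general'' is not needed and that the $\lambda_{ij}$ are independent of the $c_{ij}$) are consistent with how the lemma is used later.
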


\begin{proof}
Put $x_{1} = x$, $x_{2} = y$ and $x_{3} = z$ and suppose that the point $P$ is in $\{ x_{r} \neq 0 \}$ for an index $r = 1$, $2$ or $3$. Take regular functions $s := \dfrac{F_{G}}{x_{r}^{a}}$ and $t := \dfrac{\Phi_{G}}{x_{r}^{b}}$. By Lemma \ref{basic prop of invariants} (6), since the curves $V(F_{G})$ and $V(\Phi_{G})$ meet transversally at $P$, this pair $(s, t)$ is an analytic coordinate system in an affine neighborhood of the origin in $\mathbb{A}^{2}$. Since the homogeneous polynomial $\Psi_{G}$ dose not vanish on $V(F_{G}) \cap V(\Phi_{G})$ by Lemma \ref{basic prop of invariants} (4), we can take an analytic function $\lambda_{i j}(s, t)$ such that
\[
\lambda_{i j}(s(x, y, z), t(x, y, z)) = \left( \frac{\Psi_{G}(x, y, z)}{(x_{r}(x, y, z))^{c}} \right)^{\frac{d-(ai+bj)}{c}}
\]
for any $(x, y, z)$ near $P$, and $\lambda_{i j}(0, 0) \neq 0$. Then the formal curve defined by
\[
h(s, t) := \sum_{(i, j) \in I_{d}} c_{i j} \lambda_{i j}(s, t) s^{i} t^{j}
\]
passes the origin and on a neighborhood at the origin is isomorphic to the curve $V(H)$ on a neighborhood at $P$.  In particular, since the curve $V(H)$ is a general element, the singularity type of $V(H)$ at the point $P$ is equal to the singularity type of $V(h)$ at the origin.
\end{proof}

Assume that $d$ is a positive integer such that there is no nonsingular curve invariant under $G$ and the linear system$(\mathfrak{d}_{G})_{d}$ is nonempty. Let $H$ be a general homogeneous polynomial of degree $d$ invariant under $G$.

\begin{lemma}\label{singularities for invariants}
For each group and any degree $d$ in the following list, all singularities of the curve $V(H)$ belong to $V(F_{G}) \cap V(\Phi_{G})$, and are of the same type given in the list.
\[
\begin{array}{|c|ccc|}
\hline
G & \multicolumn{2}{c}{\mbox{the condition on }d} & \mbox{type of singularities} \\
\hline
\mathcal{V} & d \geq 48 & d \equiv 18 \mod 30 & A_{1} \mbox{ (node)} \\
& d \geq 54 &  d \equiv 24 \mod 30 & A_{3} \mbox{ (tacnode)} \\
\hline
\mathcal{I} & d \geq 24 & d \equiv 4 \mod 10 & A_{3} \mbox{ (tacnode)} \\
& d \geq 18 & d \equiv 8 \mod 10 & A_{1} \mbox{ (node)} \\
\hline
\mathcal{K} & d \geq 30 & d \equiv 2 \mod 14 & D_{4} \\
& d \geq 36 & d \equiv 8 \mod 14 & A_{5} \\
& d \geq 24 & d \equiv 10 \mod 14 & A_{1} \mbox{ (node)} \\
& d \geq 12 & d \equiv 12 \mod 14 & A_{2} \mbox{ (cusp)} \\
\hline
\end{array}
\]
\end{lemma}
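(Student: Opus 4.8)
The plan is to treat the statement in two stages: (I) proving that the singular locus of a general member $V(H)$ of $(\mathfrak{d}_G)_d$ is exactly $V(F_G)\cap V(\Phi_G)$, which, being a single $G$-orbit, forces all singularities to have one and the same analytic type; and (II) identifying that type for each entry of the table.

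For stage (I), fix $G$ and one of the residue classes for $d$ listed in the table, together with the stated lower bound. A short arithmetic check --- made once for each of the finitely many cases, in the same spirit as the proofs of Theorems \ref{nonsingularity of V}, \ref{nonsingularity of I} and \ref{nonsingularity of K} --- shows that $d$ then satisfies conditions (1), (2), (3), (5), (6) of Proposition \ref{condition of nonsingularity} but fails condition (4); here the lower bounds on $d$ are precisely what is needed to make (1)--(3) hold. By Lemma \ref{singular condition on F and Phi}, a general $V(H)$ can be singular only along $V(F_G)\cap V(\Phi_G)$. Conversely, the failure of (4) says that for the index triple $\{l,m,n\}=\{3,1,2\}$ one has $d\not\equiv 0,a,b\bmod c$, and the computation in the proof of Proposition \ref{condition of nonsingularity} (the vanishing of $H$ and all its first partials along $V(F_m)\cap V(F_n)$) shows every element of $(\mathfrak{d}_G)_d$, in particular $V(H)$, is singular at every point of $V(F_m)\cap V(F_n)=V(F_G)\cap V(\Phi_G)$. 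Thus $\Sing V(H)=V(F_G)\cap V(\Phi_G)$, a nonempty finite set which by Lemma \ref{basic prop of invariants}(7) is a single $G$-orbit. Since $H$ is $\widetilde G$-invariant, $V(H)$ is $G$-invariant, so $G$ acts transitively on $\Sing V(H)$ through automorphisms of $\mathbb{P}^2$ preserving $V(H)$; hence all singularities of $V(H)$ are mutually analytically equivalent, and it remains to compute the type at a single point $P\in V(F_G)\cap V(\Phi_G)$.

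For stage (II), I would apply Lemma \ref{lemma of sing type}: the germ of $V(H)$ at $P$ is analytically equivalent to the plane curve germ $V(h)$ at the origin, where $h(s,t)=\sum_{(i,j)\in I_d}c_{ij}\lambda_{ij}(s,t)\,s^it^j$ with each $\lambda_{ij}$ a unit at the origin and the $c_{ij}$ general. Because the $\lambda_{ij}$ are units and the $c_{ij}$ are free, the coefficient of $s^it^j$ in $h$ is, up to higher-order terms, a general nonzero scalar for every $(i,j)\in I_d$, so the Newton polygon of $h$ coincides with that of the support $I_d\subset\mathbb{Z}_{\geq 0}^2$, and $h$ is Newton non-degenerate for general $c_{ij}$ (each compact face polynomial is a general weighted-homogeneous binary form supported on a segment of $I_d$ and therefore has no critical point in $(\mathbb{C}^\ast)^2$). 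For each case it then suffices to solve the congruence $ai+bj\equiv d\bmod c$ for its minimal nonnegative solutions in order to locate the vertices of the Newton boundary near the origin, read off the multiplicity (the least weighted degree occurring) and the Milnor number $\mu=2S-a_1-a_2+1$ (Kouchnirenko's formula, with $S$ the area cut off by the Newton polygon and $a_1,a_2$ its intercepts), and match the result against the normal forms $A_k\colon x^2+y^{k+1}$ and $D_k\colon x^2y+y^{k-1}$. For example, when the smallest element of $I_d$ is $(1,1)$ the leading form of $h$ is $st$, a pair of distinct lines, so the singularity is a node $A_1$; when the Newton boundary near the origin carries $t^2$, $s^2t$ and $s^4$ (as for $G=\mathcal{V}$ with $d\equiv 24\bmod 30$), completing the square in the weighted-homogeneous leading part turns $h$ into $t^2+s^4$ plus higher-order terms, a tacnode $A_3$; the remaining entries of the table are obtained by the same bookkeeping.

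The step I expect to be the main obstacle is stage (II): one has to run the Newton-polygon analysis case by case, in each instance identifying exactly which lattice points of $I_d$ contribute to the Newton boundary, checking that the corresponding face polynomials are non-degenerate for general coefficients (i.e.\ that a handful of explicit binary forms have no repeated roots beyond those forced by their monomial supports), and correctly reading off the simple singularity type from the multiplicity and the Milnor number. By contrast, all of stage (I) and the location of the Newton vertices in stage (II) are routine, though they must be carried out for every group and every residue class appearing in the table.
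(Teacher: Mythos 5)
Your proposal follows the paper's skeleton: stage (I) is exactly the paper's argument (conditions (1)--(3), (5), (6) of Proposition \ref{condition of nonsingularity} together with Lemma \ref{singular condition on F and Phi}), and stage (II) rests on the same key reduction, Lemma \ref{lemma of sing type}. The only real methodological difference is the finishing step: the paper checks by hand that certain exponent pairs lie in $I_{d}$ and that every $(i,j)\in I_{d}$ satisfies a weighted-degree bound, then passes to the normal forms $st$, $s^{3}+t^{2}$, $s^{4}+t^{2}$, $s^{2}+t^{6}$, $s^{4}+st^{2}$ by explicit changes of variables, whereas you use the Newton polygon of $I_{d}$, non-degeneracy for general coefficients, and Kouchnirenko's formula. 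That variant is sound: a vertex of the Newton boundary cannot be of the form $(i,j)+(p,q)$ with $(i,j)\in I_{d}$ and $(p,q)\geq 0$ nonzero, and no point of $I_{d}$ is componentwise strictly dominated by a point of a compact face (it would violate the supporting-weight inequality), so the compact face polynomials have exactly the coefficients $c_{ij}\lambda_{ij}(0,0)$ and are non-degenerate for general $c_{ij}$; conditions (1) and (2) make the diagram convenient. What it buys is a uniform identification criterion in place of case-by-case coordinate changes, at the cost of quoting Kouchnirenko and the ADE classification. Note that for $G=\mathcal{K}$, $d\equiv 2 \mod 14$ your computation gives boundary vertices $(4,0)$, $(1,2)$, $(0,5)$, hence multiplicity $3$ and $\mu=5$, i.e.\ type $D_{5}$; this agrees with the paper's own reduction to $s^{4}+st^{2}$ (and with Lemma \ref{irreducible components are nonsingular}), the table entry $D_{4}$ being a misprint, so ``the remaining entries of the table'' must be read with that correction.

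There is one concrete gap in stage (I) as written: the entry $G=\mathcal{K}$, $d=12$ is not covered, because Lemma \ref{singular condition on F and Phi} assumes $d\geq c=14$, so your blanket appeal to it fails there. The paper handles this case separately via $(\mathfrak{d}_{\mathcal{K}})_{12}=\left< \Phi_{\mathcal{K}}^{2}, F_{\mathcal{K}}^{3}\right>$ (Lemma \ref{lowdeg of K}), whose base locus is $V(F_{\mathcal{K}})\cap V(\Phi_{\mathcal{K}})$, plus Bertini; you need some such ad hoc argument. Relatedly, your remark that the lower bounds on $d$ are ``precisely what is needed to make (1)--(3) hold'' is not quite accurate: for $G=\mathcal{V}$, $d\equiv 24 \mod 30$, conditions (1)--(3) already hold at $d=24$, and the bound $d\geq 54$ is imposed because $24<c=30$ (so Proposition \ref{condition of nonsingularity} and Lemma \ref{singular condition on F and Phi} do not apply, and $d=24$ is disposed of by Lemma \ref{lowdeg nonsingularity of V}). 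Both points are easily repaired and do not affect the rest of your argument.
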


\begin{proof}
First, a general element of $(\mathfrak{d}_{\mathcal{K}})_{12}$ is singular only on $V(F_{\mathcal{K}}) \cap V(\Phi_{\mathcal{K}})$ by the proof of Lemma \ref{lowdeg of K} (4). For the other cases in the list, we have $d \geq c$. Let us prove that the curve $V(H)$ has singular points only on $V(F_{G}) \cap V(\Phi_{G})$ in these cases. We recall the conditions (1) to (6) in the proof of Theorem \ref{nonsingularity of V}, Theorem \ref{nonsingularity of I} and Theorem \ref{nonsingularity of K}. We know that the condition (4) does not hold. By Lemma \ref{singular condition on F and Phi}, it suffices to show the conditions (1), (2), (3), (5) and (6) hold for each group $G = \mathcal{V}$, $\mathcal{I}$ or $\mathcal{K}$.

Let $G = \mathcal{V}$. Since $d$ is a multiple of 6, the conditions (2), (3), (5) and (6) (cf. the proof of Theorem \ref{nonsingularity of V}) are satisfied. We prove the condition (1), i.e., there is a pair of nonnegative integers $(j, k)$ such that $d = 12j + 30k$. If $d \equiv 18 \mod 30$, then the pair of integers $(j, k) = \left(4, \dfrac{d-48}{30} \right)$ satisfies $d = 12j + 30k$. If $d \equiv 24 \mod 30$, then $(j, k) = \left(2, \dfrac{d-24}{30} \right)$ satisfies $d = 12j + 30k$. Hence, the degree $d$ satisfies the condition (1).

Let $G = \mathcal{I}$. Since $d$ is even, the conditions (2), (3), (5) and (6) (cf. the proof of Theorem \ref{nonsingularity of I}) hold. We show the condition (1), i.e., there is a pair of integers $(j, k)$ such that $d = 6j + 10k$. If $d \geq 24$ and $d \equiv 4 \mod 10$, then $(j, k) = \left(4, \dfrac{d-24}{10}\right)$ satisfies $d = 6j + 10k$. If $d \equiv 8 \mod 10$, then $(j, k) = \left(3, \dfrac{d-18}{10}\right)$ satisfies $d = 6j + 10k$. Thus the degree $d$ satisfies the condition (1).

Let $G = \mathcal{K}$. If the integer $d$ is even and greater than $4$, then the conditions (3), (5) and (6) (cf. the proof of Theorem \ref{nonsingularity of K}) hold. We check the conditions (1) and (2). To show the condition (1), i.e., that there is a pair of nonnegative integers $(j, k)$ such that $6j + 14k = d$ for $d \equiv 2, 8, 10$ or $12 \mod 14$, we can take $(j, k) = \left(5, \dfrac{d - 30}{14}\right)$, $\left(6, \dfrac{d - 36}{14}\right)$, $\left(4, \dfrac{d - 24}{14}\right)$ or $\left(2, \dfrac{d - 12}{14}\right)$, respectively. To show the condition (2), i.e., that there is a pair of nonnegative integers $(i, k)$ such that $4i + 14k =d$ for $d \equiv 2, 8, 10$ or $12 \mod 14$, we can take $(i, k) = \left(4, \dfrac{d - 16}{14}\right)$, $\left(9, \dfrac{d - 36}{14}\right)$, $\left(6, \dfrac{d - 24}{14}\right)$ or $\left(3, \dfrac{d - 12}{14}\right)$, respectively.

Next, we look at singularities of the curve $V(H)$. Let $P$ be a point in $V(F_{G}) \cap V(\Phi_{G})$. By Lemma \ref{lemma of sing type}, the singular type of $V(H)$ at $P$ is analytically equivalent to the singularity of the formal curve $V(h)$ at the origin where
\[
h(s, t) = \sum_{(i, j) \in I_{d}} c_{i j}\lambda_{i j}(s, t) s^{i}t^{j}
\]
such that $c_{i j}$ is general and $\lambda_{i j}(0, 0) \neq 0$.

We consider the case of nodes. If $(1, 1) \in I_{d}$, then a general element $V(h)$ has a node at the origin. Since
\[
\frac{d-(a+b)}{c} = \left\{\begin{array}{ccccc}
\dfrac{d - 18}{30} & (d \geq 48, & d \equiv 18 \mod 30 & \mbox{and} & G = \mathcal{V}) \\
&&&&\\
\dfrac{d - 8}{10} & (d \geq 18, & d \equiv 8 \mod 10 & \mbox{and} & G = \mathcal{I}) \\
&&&&\\
\dfrac{d - 10}{14} & (d \geq 24, & d \equiv 10 \mod 14 & \mbox{and} & G = \mathcal{K}) \\
\end{array}\right.
\]
is a nonnegative integer, a general element of $(\mathfrak{d}_{\mathcal{V}})_{d}$ has nodal.

We consider the case of cusps. If $(3, 0)$ and $(0, 2) \in I_{d}$ and any $(i, j) \in I_{d}$ satisfies $2i + 3j \geq 6$, then we can transform the curve $V(h)$ to the curve defined by $s^{3} + t^{2} = 0$ by a suitable change of variables. Assume that $G = \mathcal{K}$ and $d \equiv 12 \mod 14$. For $(i, j) = (3, 0)$ and $(0, 2)$, $\dfrac{d - (4i + 6j)}{14}$ is a nonnegative integer, so $(3, 0)$ and $(0, 2) \in I_{d}$. If $(i, j) \in I_{d}$, then $2i + 3j \equiv 6 \mod 7$, and we see that $2i + 3j \geq 6$.

We consider the case of tacnodes. If $(4, 0)$ and $(0, 2) \in I_{d}$ and any $(i, j) \in I_{d}$ satisfies $i + 2j \geq 4$, then we can transform the curve $V(h)$ to the curve define by $s^{4} + t^{2} = 0$ by a suitable change of variables. We can obtain this fact if we interchange $i$ and $j$. Assume that $G = \mathcal{V}$, $d \geq 54$ and $d \equiv 24 \mod 30$. For $(i, j) = (4, 0)$ or $(0, 2)$, $\dfrac{d - (6i + 12j)}{30}$ is a positive integer. Thus $(4, 0)$ and $(0, 2) \in I_{d}$. For any $(i, j) \in I_{d}$, since $i + 2j \equiv 4 \mod 5$, we see $i + 2j \geq 4$. Assume that $G = \mathcal{I}$, $d \geq 24$ and $d \equiv 4 \mod 10$. For $(i, j) = (2, 0)$ or $(0, 4)$, $\dfrac{d - (2i + 6j)}{10}$ is a nonnegative integer, i.e., $(2, 0)$ and $(0, 4) \in I_{d}$. Any $(i, j) \in I_{d}$ satisfies $i + 3j \equiv 2 \mod 5$. By multiplying both side by $2$, we have $2i + j \equiv 4 \mod 5$, and $2i + j \geq 4$.

We consider the case of type $A_{5}$. If $(2, 0)$ and $(0, 6) \in I_{d}$ and any $(i, j) \in I_{d}$ satisfies $3i + j \geq 6$, then we can transform the curve $V(h)$ to the curve define by $s^{2} + t^{6} = 0$ by a suitable change of variables. Assume that $G = \mathcal{K}$, $d \geq 36$ and $d \equiv 8 \mod 14$. For $(i, j) = (2, 0)$ and $(0, 6)$, $\dfrac{d - (4i + 6j)}{14}$ is a nonnegative integer. Any $(i, j) \in I_{d}$ satisfies $2i + 3j \equiv 4 \mod 7$. By multiplying both side by $5$, we have $3i + j \equiv 6 \mod 7$, and $3i + j \geq 6$.

We consider the case of type $D_{5}$. If $(4, 0)$ and $(1, 2)$ and any $(i, j) \in I_{d}$ satisfies $2i + 3j \geq 8$, then we can transform the curve $V(h)$ to the curve define by $s^{4} + s t^{2} = 0$ by a suitable change of variables. Assume that $G = \mathcal{K}$, $d \geq 30$ and $d \equiv 2 \mod 14$. 
For $(i, j) = (4, 0)$ and $(1, 2)$, $\dfrac{d - (4i + 6j)}{14}$ is a positive integer. Any $(i, j) \in I_{d}$ satisfies $2i + 3j \equiv 1 \mod 7$. We see $2i + 3j \geq 8$.
\end{proof}

If any singularity of the curve $V(H)$ is a cusp, then $V(H)$ is irreducible. We have the following if a general invariant curve $V(H)$ is reducible.

\begin{lemma}\label{irreducible components are nonsingular}
For $d$ as in Lemma \ref{singularities for invariants}, if the curve $V(H)$ is reducible, then the following hold.
\begin{enumerate}
\item[(1)] We can write $H = H_{1} \cdots H_{n} \in \mathbb{C}[x, y, z]$ where $V(H_{1})$, $V(H_{2})$, $\cdots$, $V(H_{n})$ are pairwise distinct integral curves for $n \geq 2$. 
\item[(2)] As a set, we have
\[
\Sing V(H) = V(F_{G}) \cap V(\Phi_{G}) = \coprod_{i \neq j} (V(H_{i}) \cap V(H_{j})).
\]
Furthermore, the intersection multiplicity is the same integer $m$ at any point in this set where
\[
m = \left\{\begin{array}{cl}
1 & \mbox{if the singularity is a node},\\
2 & \mbox{if the singularity is a tacnode or of type } D_{5},\\
3 & \mbox{if the singularity is of type } A_{5}.
\end{array}\right.
\]
\item[(3)] Let $d_{i} = \deg H_{i}$. Then we obtain the formula
\[
\sum_{i < j} d_{i} d_{j} = m \deg F_{G} \deg \Phi_{G}.
\]
In particular, for any $k$ and $l$ with $k \neq l$, we have the inequalities
\begin{equation}\label{first inequality}
d_{k} (d - d_{k}) \leq m \deg F_{G} \deg \Phi_{G} \tag{$\ast$}
\end{equation}
with equality only for $n = 2$ and
\begin{equation}\label{second inequality}
d_{l} ((d - d_{k}) - d_{l}) \leq m \deg F_{G} \deg \Phi_{G} - d_{k} (d - d_{k}). \tag{$\ast\ast$}
\end{equation}
\end{enumerate}
\end{lemma}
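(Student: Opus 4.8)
The plan is to combine two facts established earlier: that $\Sing V(H)$ is contained in the finite $G$-orbit $V(F_G)\cap V(\Phi_G)$ (Lemma~\ref{singularities for invariants} together with Lemma~\ref{basic prop of invariants}~(7)), and that in the reducible case every singularity of $V(H)$ is a node, a tacnode, of type $A_5$ or of type $D_5$, each of which has exactly two local analytic branches; the cuspidal case is excluded by the remark preceding this lemma, since a cusp would force $V(H)$ to be irreducible. For (1), I would note that a multiple component of $V(H)$ would force $\Sing V(H)$ to contain a whole curve, contradicting its finiteness; hence $H$ is squarefree, and factoring it into $\mathbb{C}$-irreducible factors gives $H=H_1\cdots H_n$ with the $V(H_i)$ pairwise distinct integral curves, and $n\ge 2$ because $V(H)$ is reducible.

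For (2), I would first argue that $\Sing V(H)=V(F_G)\cap V(\Phi_G)$: the set $\Sing V(H)$ is $G$-stable since $V(H)$ is $G$-invariant, it is nonempty because by hypothesis $(\mathfrak{d}_G)_d$ has no nonsingular member, and it is contained in the single $G$-orbit $V(F_G)\cap V(\Phi_G)$, so by transitivity it equals it. Next, at a point $P$ of this set $V(H)$ has exactly two branches, and the key claim is that these two branches lie on two \emph{distinct} components. I would prove this by contradiction: if both branches at some $P$ lay on one component, transitivity of the $G$-action on the orbit would make this true at \emph{every} point of $\Sing V(H)$; but then for $i\ne j$ any point of $V(H_i)\cap V(H_j)$ — which is necessarily a singular point of $V(H)$ — would carry branches from two distinct components, a contradiction, forcing $V(H_i)\cap V(H_j)=\emptyset$, which is impossible for curves in $\mathbb{P}^2$. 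Hence exactly two components cross at each $P$, giving $\Sing V(H)=\coprod_{i\ne j}\bigl(V(H_i)\cap V(H_j)\bigr)$ (the union is disjoint because three components through a point would give three branches). The value of $m$ is then obtained by computing the local intersection number of the two branches from the normal forms $xy=0$, $y^2=x^4$, $y^2=x^6$ and $x(y^2+x^3)=0$, giving $m=1,2,3,2$ respectively.

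For (3), I would apply Bézout's theorem: $\sum_{i<j} d_id_j$ equals the total number of pairwise intersection points of the components counted with multiplicity; by (2) these all lie in $V(F_G)\cap V(\Phi_G)$, exactly one pair of components passes through each such point, and the intersection multiplicity there is $m$, so $\sum_{i<j}d_id_j=m\cdot\#\bigl(V(F_G)\cap V(\Phi_G)\bigr)=m\deg F_G\deg\Phi_G$, the last step using transversality of $V(F_G)$ and $V(\Phi_G)$ (Lemma~\ref{basic prop of invariants}~(6)). Then the first inequality follows from $d_k(d-d_k)=\sum_{i\ne k}d_kd_i\le\sum_{i<j}d_id_j$, with equality precisely when no pair $i<j$ avoids $k$, i.e. $n=2$; the second follows by subtracting $\sum_{i\ne k}d_kd_i$ from the identity and bounding the remainder below by $\sum_{i\ne k,\,l}d_ld_i=d_l\bigl((d-d_k)-d_l\bigr)$.

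The main obstacle is the distinctness-of-components step in (2): ruling out a singular point whose two branches lie on a single component. The argument above uses the simplicity of $G$ only via transitivity of its action on $V(F_G)\cap V(\Phi_G)$ together with the fact that two curves in $\mathbb{P}^2$ always meet; the type-$D_5$ case requires a little care, since there one of the two branches is itself a cusp, so the component carrying it need not be smooth — which is why part (1) claims only that the $V(H_i)$ are integral, not that they are smooth.
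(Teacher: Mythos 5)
Your proposal is correct and follows essentially the same route as the paper: reducedness of $H$ from finiteness of the singular locus, identification of $\Sing V(H)$ with the $G$-orbit $V(F_G)\cap V(\Phi_G)$, transitivity of $G$ together with the two-branch local structure and the fact that any two plane curves meet to show exactly two distinct components cross at each singular point, and then B\'ezout plus elementary manipulation for (3). The only differences are cosmetic: you phrase the ``two distinct components'' step as a contradiction argument where the paper directly transports a point of $V(H_1)\cap V(H_2)$ to an arbitrary singular point by a group element, and you recompute $m$ from the normal forms where the paper cites Lemma \ref{singularities for invariants}.
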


\begin{proof} 
First, since the singular locus $V(F_{G}) \cap V(\Phi_{G})$ of $(\mathfrak{d}_{G})_{d}$ is finite, $V(H)$ is reduced. Thus we can write $H = H_{1} \cdots H_{n} \in \mathbb{C}[x, y, z]$ as in (1).

Next, we prove the claim (2). By Lemma \ref{singularities for invariants}, we have $\Sing V(H) = V(F_{G}) \cap V(\Phi_{G})$. Any intersection point $P \in V(H_{i}) \cap V(H_{j})$ is a singular point of $V(H)$, i.e., $\bigcup\limits_{i \neq j} (V(H_{i}) \cap V(H_{j})) \subset \Sing V(H)$. Conversely, take any $P \in \Sing V(H)$ and a point $Q \in V(H_{1}) \cap V(H_{2})$ (since $n \geq 2$). Then $P$ and $Q$ are contained in $V(F_{G}) \cap V(\Phi_{G})$. By Lemma \ref{basic prop of invariants} (7), since $V(F_{G}) \cap V(\Phi_{G})$ is a $G$-orbit, there is an action $A \in G$ which send $Q$ to $P$. In addition, $A$ induces the permutation of the curves $V(H_{1})$, $V(H_{2})$, $\cdots$, $V(H_{n})$. Thus $P$ is an intersection point of $V(H_{i})$ and $V(H_{j})$ where $A$ send $V(H_{1})$ to $V(H_{i})$ and $V(H_{2})$ to $V(H_{j})$. Finally, since any singularity of $V(H)$ has a $2$ analytic branch by Lemma \ref{singularities for invariants}, only two curves intersect at any intersection point. Therefore, $\bigcup\limits_{i \neq j} (V(H_{i}) \cap V(H_{j}))$ is disjoint. In particular, since the singularities are a same type, the intersection multiplicity is a same integer $m$ which holds as in (2) by Lemma \ref{singularities for invariants}.

Finally, we show the claim (3). By B\'{e}zout's theorem, the degrees $d_{1}$, $d_{2}$, $\cdots$, $d_{n}$ satisfy the first formula in (3). (Note that $\#(V(F_{G}) \cap V(\Phi_{G})) = \deg F_{G} \deg \Phi_{G}$ by Lemma \ref{basic prop of invariants} (6) and (7).) Fix $d_{k}$ and $d_{l}$ with $i \neq j$. We see the inequality
\[
\begin{array}{ccccc}
d_{k} (d - d_{k}) & = & d_{k} \displaystyle\sum_{i \neq k} d_{i} & & \\
& = & \displaystyle\sum_{i \neq k} d_{i} d_{k} & \leq & \displaystyle\sum_{i < j} d_{i} d_{j}.
\end{array}
\]
Hence, the inequality (\ref{first inequality}) holds. Since $d_{i} d_{j} > 0$ for any $i$ and $j$ with $i \neq j$, the two sides are equal if and only if $n = 2$. On the other hand, we have
\[
\sum_{i < j, i \neq k, j \neq k} d_{i} d_{j} = m \deg F_{G} \deg \Phi_{G} - \sum_{i \neq k} d_{i} d_{k}
\]
from the first equality in (3) and the inequality
\[
\begin{array}{ccccc}
d_{l} ((d - d_{k}) - d_{l}) & = & d_{l} \displaystyle\sum_{i \neq k, i \neq l} d_{i} & & \\
& = & \displaystyle\sum_{i \neq k, i \neq l} d_{i}d_{l} & \leq & \displaystyle\sum_{i < j, i \neq k, j \neq k} d_{i} d_{j}.
\end{array}
\]
Therefore, the inequality (\ref{second inequality}) also holds.
\end{proof}

In the following subsections, we prove the irreducibility of curves for each group by a contradiction: If there are at least $2$ irreducible components, then the intersection numbers of the components imply that the curve has too much singularity.

\subsection{The Valentiner group $\mathcal{V}$}

In this subsection, we deal with the case of the Valentiner group. We show the following theorem related to integral curves under the Valentiner group $\mathcal{V}$.

\begin{theorem}\label{irreducible of V}
There exists an integral projective plane curve of degree $d$ invariant under the Valentiner group $\mathcal{V}$ if and only if $d$ is a multiple of $6$, $d \neq 18$ and $d \neq 24$.
\end{theorem}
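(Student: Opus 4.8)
The plan is to prove the two implications separately; the ``only if'' direction is immediate from earlier results, while the content lies in constructing integral curves in degrees $d\equiv 18$ or $24\pmod{30}$, for which the singularity analysis of this section is exactly what is needed.

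For the ``only if'' direction, Proposition~\ref{generators of irreducible invariants} shows that any irreducible $\mathcal V$-invariant curve is cut out by a polynomial in $\mathbb C[F_{\mathcal V},\Phi_{\mathcal V},\Psi_{\mathcal V}]$; since $\deg F_{\mathcal V}=6$, $\deg\Phi_{\mathcal V}=12$ and $\deg\Psi_{\mathcal V}=30$ are all multiples of $6$, the degree of such a curve is a multiple of $6$. That $d=18$ and $d=24$ are genuinely excluded is Lemma~\ref{lowdeg nonsingularity of V}: every member of $(\mathfrak d_{\mathcal V})_{18}$ is divisible by $F_{\mathcal V}$ and every member of $(\mathfrak d_{\mathcal V})_{24}$ is reducible, so neither linear system contains an integral curve.

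For the ``if'' direction I would split on $d\bmod 30$. If $d$ is a multiple of $6$ with $d\equiv 0,6$ or $12\pmod{30}$, then Theorem~\ref{nonsingularity of V} already furnishes a nonsingular --- hence integral --- $\mathcal V$-invariant curve of degree $d$. The remaining multiples of $6$ are those with $d\equiv 18\pmod{30}$, $d\ge 48$, and those with $d\equiv 24\pmod{30}$, $d\ge 54$ (the smaller representatives $18$ and $24$ being precisely the excluded degrees). In these ranges I would take a general $H\in(\mathfrak d_{\mathcal V})_d$ (the linear system is nonempty, containing $F_{\mathcal V}^{d/6}$) and prove $V(H)$ integral. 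It is reduced because by Lemma~\ref{singularities for invariants} its singular locus is contained in the finite set $V(F_{\mathcal V})\cap V(\Phi_{\mathcal V})$. For irreducibility, suppose $V(H)=\bigcup_{i=1}^n V(H_i)$ with $n\ge 2$ and $d_i=\deg H_i$. By Lemma~\ref{irreducible components are nonsingular} the singularities are all of type $A_1$ (when $d\equiv 18$) or $A_3$ (when $d\equiv 24$), so the common intersection multiplicity is $m=1$ or $m=2$, and B\'ezout gives $\sum_{i<j}d_id_j=m\deg F_{\mathcal V}\deg\Phi_{\mathcal V}=72m$ together with $\sum_i d_i=d$ and the inequality~(\ref{first inequality}), i.e. $d_i(d-d_i)\le 72m$ for every $i$.

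Since $n\ge 2$ each $d_i$ lies in $\{1,\dots,d-1\}$, so $d_i(d-d_i)\ge d-1$, and the inequality above forces $d\le 72m+1$: thus only $d=48$ survives when $m=1$ and only $d\in\{54,84,114,144\}$ when $m=2$. For each of these finitely many values one uses $\sum_i d_i^2=d^2-144m$ (from squaring $\sum_i d_i=d$) to get $\max_i d_i\ge(d^2-144m)/d$, which, combined with $\max_i d_i\le d-1$, pins the largest $d_i$ to one of at most five values just below $d$; enumerating the corresponding partitions of $d$ shows $\sum_{i<j}d_id_j\neq 72m$ in every case, a contradiction. Hence $V(H)$ is irreducible, so integral, and a curve of the required degree invariant under $\mathcal V$ exists. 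The only real obstacle is this last finite case check --- notably $d=54$, where the crude bound does not immediately collapse the possibilities --- but it is entirely elementary arithmetic.
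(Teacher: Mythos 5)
Your proposal is correct, and it follows the paper's skeleton: reduce to $d\equiv 18,24\pmod{30}$ with $d\geq 48$ resp.\ $54$, take a general member, invoke Lemma \ref{singularities for invariants} and Lemma \ref{irreducible components are nonsingular} to get the singularity type, the count $\sum_{i<j}d_id_j=72m$ and the inequality (\ref{first inequality}), and then rule out the finitely many surviving degrees $d=48$ ($m=1$) and $d\in\{54,84,114,144\}$ ($m=2$) by elementary arithmetic. Where you genuinely diverge is in that last arithmetic step. The paper works with the \emph{smallest} component $d_{1}$, uses (\ref{first inequality}) and (\ref{second inequality}), and --- crucially for $d=54$ --- observes that when the singularities are tacnodes ($m=2$) B\'ezout applied to $V(H_1)$ and $V(H_2\cdots H_n)$ forces $d_1(d-d_1)$ to be even, hence $d_1$ even, which is what eliminates $d_1=1$ there. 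You instead work with the \emph{largest} component: from $\sum_i d_i^2=d^2-144m$ you get $\max_i d_i\geq d-144m/d$, so the largest degree lies within a handful of $d$, and the few resulting partitions all give $\sum_{i<j}d_id_j$ either strictly greater than $72m$ (when $D(d-D)>72m$) or strictly smaller (e.g.\ $(52,2)$, $(52,1,1)$, $(53,1)$ for $d=54$ yield $104$, $105$, $53\neq 144$), a contradiction in every case --- I checked that this enumeration does close all cases, including $d=54$, which you rightly flagged as the delicate one. So your route trades the paper's parity-of-$d_1$ trick and the secondary inequality (\ref{second inequality}) for a slightly larger but completely mechanical finite enumeration; the paper's version is a bit slicker at $d=54$, while yours is more uniform across the cases and needs only (\ref{first inequality}) and the exact B\'ezout count.
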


\begin{proof}
There is a possibly reducible or nonreduced curve of degree $d$ invariant under the Valentiner group $\mathcal{V}$ if and only if $d$ is divisible by $6$. By Theorem \ref{nonsingularity of V}, if $d \equiv 0, 6, 12 \mod 30$, then the linear system $(\mathfrak{d}_{\mathcal{V}})_{d}$ has a nonsingular element. On the other hand, by Lemma \ref{lowdeg nonsingularity of V}, the linear system $(\mathfrak{d}_{\mathcal{V}})_{d}$ has only reducible or nonreduced curves for $d = 18$ or $24$. We have to show that there is an irreducible element of $(\mathfrak{d}_{\mathcal{V}})_{d}$ if the degree $d$ satisfies $d > 30$ and $d \equiv 18$ or $24 \mod 30$.

Take a general member $H$ of the linear system $(\mathfrak{d}_{\mathcal{V}})_{d}$. We assume that the homogeneous polynomial $H$ is reducible and derive a contradiction. By Lemma \ref{irreducible components are nonsingular} (1), we can write $H = H_{1}\cdots H_{n}$ where the curves $V(H_{i})$ are pairwise distinct integral curves with $n \geq 2$.

Let $d_{i} = \deg H_{i}$. We may assume that $d_{1} \leq d_{2} \leq \cdots \leq d_{n}$. By Lemma \ref{irreducible components are nonsingular} (3) (\ref{first inequality}), for the degree $d_{1}$, we obtain
\begin{equation}\label{iq for V}
d_{1}(d - d_{1}) \leq 72m \tag{$\ast$}
\end{equation}
where $m$ is an integer as in Lemma \ref{irreducible components are nonsingular} (2). We note that the left hand side is monotonically increasing for $1 \leq d_{1} \leq \dfrac{d}{2}$, in particular, and it is greater than or equal to $d - 1$.

Suppose that $d \equiv 18 \mod 30$. Then $m = 1$ by Lemma \ref{singularities for invariants}. If $d \geq 18 + 2 \cdot 30 = 78$, then the inequality (\ref{iq for V}) does not hold. Thus $d$ is only $48$. For $2 \leq d_{1} \leq 24$, the inequality (\ref{iq for V}) is not satisfied. Let $d_{1} = 1$. If $n = 2$ (i.e. $d_{2} = 47$), then both sides of (\ref{iq for V}) are equal; however, $1 \cdot 47 \neq 72 \cdot 1$. If $n > 2$, by Lemma \ref{irreducible components are nonsingular} (3) (\ref{second inequality}), we have
\[
46 \leq d_{2}(47 - d_{2}) \leq (1 \cdot 72) - (1 \cdot 47) = 25
\]
since $d_{2} \neq 47$. This is a contradiction.

Suppose that $d \equiv 24 \mod 30$. Then $m = 2$ by Lemma \ref{singularities for invariants}. Thus any intersections of the curve $V(H_{1})$ and the curve $V(H_{2} \cdots H_{n})$ is a tacnode, and we see 
\[
d_{1} (d - d_{1}) = 2 \cdot \#\left(V(H_{1}) \cap V(H_{2} \cdots H_{n})\right)
\]
by B\'{e}zout's theorem. Hence, we note that the degree $d_{1}$ is even if $d$ is even. If $d \geq 24 + 5 \cdot 30 = 174$, then the inequality (\ref{iq for V}) does not hold. Hence, $d$ is $54$, $84$, $114$ or $144$. Let $d = 84, 114$ or $144$. Since $d_{1}$ is even, $2 \leq d_{1} \leq \dfrac{d}{2}$ and the inequality (\ref{iq for V}) does not hold. Let $d = 54$. Since the inequality (\ref{iq for V}) is not satisfied for $3 \leq d_{1} \leq 27$ and the degree $d_{1}$ is even, we see that $d_{1} = 2$. If $n = 2$, then both sides of (\ref{iq for V}) are equal; however, $2 \cdot 52 \neq 72 \cdot 2$. If $n > 2$, by Lemma \ref{irreducible components are nonsingular} (3) (\ref{second inequality}), we have
\[
51 \leq d_{2}(52 - d_{2}) \leq (2 \cdot 72) - (2 \cdot 52) = 40
\]
since $d_{2} \neq 52$. This is also a contradiction.
\end{proof}

\subsection{The icosahedral group $\mathcal{I}$}

In this subsection, we see the case of the icosahedral group in the same way as Subsection 4.1.

\begin{theorem}
Let $d$ be a positive integer. There exists an integral projective plane curve of degree $d$ invariant under $\mathcal{I}$ if and only if $d$ is even and neither $4$, $8$ nor $14$.
\end{theorem}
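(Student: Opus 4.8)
The plan is to follow the proof of Theorem~\ref{irreducible of V} closely, the bookkeeping being even simpler here because $\deg F_{\mathcal{I}} \deg \Phi_{\mathcal{I}} = 2 \cdot 6 = 12$ is small. For the ``only if'' direction: an irreducible invariant curve has even degree by Proposition~\ref{generators of irreducible invariants}, and every element of $(\mathfrak{d}_{\mathcal{I}})_{d}$ is reducible or nonreduced when $d = 4, 8$ or $14$ by Lemma~\ref{reducible of I}, so no integral curve of those degrees exists. For the ``if'' direction, if $d \equiv 0, 2$ or $6 \mod 10$ then Theorem~\ref{nonsingularity of I} already provides a nonsingular, hence integral, curve of degree $d$; it therefore remains to produce an integral curve for each even $d$ with $d \equiv 4$ or $8 \mod 10$ and $d > 14$, that is, for $d \geq 24$ with $d \equiv 4 \mod 10$ and for $d \geq 18$ with $d \equiv 8 \mod 10$.

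For such $d$ I would take a general member $H$ of the nonempty linear system $(\mathfrak{d}_{\mathcal{I}})_{d}$ and show that $V(H)$ is integral, arguing by contradiction. By Lemma~\ref{singularities for invariants} the singular locus of $V(H)$ is contained in the finite set $V(F_{\mathcal{I}}) \cap V(\Phi_{\mathcal{I}})$, so $V(H)$ is reduced; if it were reducible, Lemma~\ref{irreducible components are nonsingular}~(1) would give a factorization $H = H_{1} \cdots H_{n}$ with $n \geq 2$ into pairwise distinct integral factors of degrees $d_{1} \leq \cdots \leq d_{n}$. The inequality~(\ref{first inequality}) of Lemma~\ref{irreducible components are nonsingular}~(3) then reads $d_{1}(d - d_{1}) \leq 12 m$, where, by Lemma~\ref{singularities for invariants} together with Lemma~\ref{irreducible components are nonsingular}~(2), we have $m = 1$ when $d \equiv 8 \mod 10$ (the singularities are nodes) and $m = 2$ when $d \equiv 4 \mod 10$ (tacnodes).

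Both cases should close immediately. If $d \equiv 8 \mod 10$ and $d \geq 18$, then $d_{1}(d - d_{1}) \geq d - 1 \geq 17 > 12 = 12m$, a contradiction. If $d \equiv 4 \mod 10$ and $d \geq 24$, then every intersection point of $V(H_{1})$ with $V(H_{2}\cdots H_{n})$ is a tacnode, so $d_{1}(d - d_{1}) = 2 \cdot \#\left(V(H_{1}) \cap V(H_{2}\cdots H_{n})\right)$ by B\'{e}zout's theorem; as $d$ is even this forces $d_{1}$ to be even, hence $d_{1} \geq 2$ and $d_{1}(d - d_{1}) \geq 2(d - 2) \geq 44 > 24 = 12m$, again a contradiction. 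Therefore $V(H)$ is irreducible, so it is integral, and the theorem follows. The only points requiring care are the parity observation in the tacnode case and checking that the boundary degrees $d = 18$ and $d = 24$ fall within the ranges where Lemma~\ref{singularities for invariants} applies; I do not expect a genuine obstacle here, and in particular the second inequality~(\ref{second inequality}) that was needed in the Valentiner argument is unnecessary.
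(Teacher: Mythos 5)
Your proposal is correct, and its overall strategy is the paper's: even degree via Proposition \ref{generators of irreducible invariants}, exclusion of $d=4,8,14$ via Lemma \ref{reducible of I}, nonsingular curves from Theorem \ref{nonsingularity of I}, and for the remaining degrees a general member of $(\mathfrak{d}_{\mathcal{I}})_{d}$ analyzed through Lemmas \ref{singularities for invariants} and \ref{irreducible components are nonsingular} with the bound $d_{1}(d-d_{1})\leq 12m$. The one place you genuinely deviate is the tacnode case $d\equiv 4 \bmod 10$ (in effect only $d=24$ survives the crude bound): the paper rules out $d_{1}=1$ by splitting into $n=2$, where equality in (\ref{first inequality}) fails because $1\cdot 23\neq 24$, and $n>2$, where it invokes the second inequality (\ref{second inequality}); you instead import the parity trick the paper itself uses for $\mathcal{V}$ with $d\equiv 24 \bmod 30$ — since every point of $V(H_{1})\cap V(H_{2}\cdots H_{n})$ is a tacnode with local intersection number $m=2$, B\'ezout forces $d_{1}(d-d_{1})$ even, hence $d_{1}$ even, and then $d_{1}(d-d_{1})\geq 2(d-2)\geq 44>24$ kills all cases uniformly. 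This is a legitimate and slightly cleaner route: it removes the $d_{1}=1$ case analysis and, as you predicted, makes (\ref{second inequality}) unnecessary for $\mathcal{I}$, at the cost of leaning on the multiplicity statement of Lemma \ref{irreducible components are nonsingular}~(2) rather than only on the degree inequalities. Your range checks ($d\geq 18$ for nodes, $d\geq 24$ for tacnodes) match exactly the hypotheses of Lemma \ref{singularities for invariants}, so there is no gap there.
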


\begin{proof}
There is a possibly reducible or nonreduced curve of degree $d$ invariant under $\mathcal{I}$ if and only if the degree $d$ is even. By Theorem \ref{nonsingularity of I}, the linear system $(\mathfrak{d}_{\mathcal{I}})_{d}$ has a nonsingular element if $d \equiv 0, 2, 6 \mod 10$. On the other hand, for $d = 4, 8$ or $14$, the linear system $(\mathfrak{d}_{\mathcal{I}})_{d}$ has no integral member by Lemma \ref{reducible of I}.

We assume that $d$ satisfies $d \geq 18$ and $d \equiv 4$ or $8 \mod 10$. Take a general element $H$ of the linear system $(\mathfrak{d}_{\mathcal{I}})_{d}$ (where the curve $V(H)$ is singular only at points in $V(F_{\mathcal{I}}) \cap V(\Phi_{\mathcal{I}})$). Assume that the homogeneous polynomial $H$ is reducible, and derive a contradiction. By Lemma \ref{irreducible components are nonsingular} (1), we can write $H = H_{1}\cdots H_{n}$ where the curves $V(H_{i})$ are pairwise distinct integral curves for $n \geq 2$.

Let $d_{i} = \deg H_{i}$. We may assume that $d_{1} \leq d_{2} \leq \cdots \leq d_{n}$. By Lemma \ref{irreducible components are nonsingular} (3) (\ref{first inequality}), for the degree $d_{1}$, we obtain
\begin{equation}\label{iq for I}
d_{1}(d - d_{1}) \leq 12m \tag{$\ast$}
\end{equation}
where $m$ is an integer as in Lemma \ref{irreducible components are nonsingular} (2). We note that the left hand side is monotonically increasing for $1 \leq d_{1} \leq \dfrac{d}{2}$, in particular, and it is greater than or equal to $d - 1$.

Suppose that $d \equiv 8 \mod 10$. Then $m = 1$ by Lemma \ref{singularities for invariants}. Then the inequality (\ref{iq for I}) does not hold for $d \geq 18$.

Suppose that $d \equiv 4 \mod 10$. Then $m = 2$ by Lemma \ref{singularities for invariants}. If $d \geq 4 + 3 \cdot 10 = 34$, then the inequality (\ref{iq for I}) does not hold. Take $d = 24$. Since the inequality (\ref{iq for I}) is not satisfied for $2 \leq d_{1} \leq 12$. Let $d_{1} = 1$. If $n = 2$ (i.e. $d_{2} = 23$), both sides of (\ref{iq for I}) are equal; however, $1 \cdot 23 \neq 12 \cdot 2$. If $n > 2$, by Lemma \ref{irreducible components are nonsingular} (3) (\ref{second inequality}), we have
\[
22 \leq d_{2}(23 - d_{2}) \leq (2 \cdot 12) - (1 \cdot 23) = 1.
\]
since $d_{2} \neq 23$. This is a contradiction.
\end{proof}

\subsection{The Klein group $\mathcal{K}$}

In this subsection, we consider irreducible curves invariant under the Klein group $\mathcal{K}$.

\begin{theorem}
Let $d$ be a positive integer. There exists an integral projective plane curve of degree $d$ invariant under $\mathcal{K}$ if and only if $d$ is even and neither $2$, $8$, $10$, $16$ nor $22$.
\end{theorem}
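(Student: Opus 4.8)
The proof will follow the pattern of Subsections 4.1 and 4.2. By Proposition \ref{generators of irreducible invariants} every integral $\mathcal{K}$-invariant curve has even degree and is defined by a polynomial in $\mathbb{C}[F_{\mathcal{K}}, \Phi_{\mathcal{K}}, \Psi_{\mathcal{K}}]$; since $\deg F_{\mathcal{K}} = 4$, $\deg \Phi_{\mathcal{K}} = 6$, $\deg \Psi_{\mathcal{K}} = 14$ and every even integer $\geq 4$ lies in the numerical semigroup generated by $4$ and $6$, the linear system $(\mathfrak{d}_{\mathcal{K}})_{d}$ is nonempty exactly for even $d \geq 4$; hence no integral invariant curve exists for $d$ odd or $d = 2$. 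For $d \equiv 0, 4$ or $6 \mod 14$, Theorem \ref{nonsingularity of K} already supplies a nonsingular, hence integral, member, and by Lemma \ref{lowdeg of K} the degrees $d = 8, 10, 16, 22$ admit no integral member. It therefore remains to treat the even degrees with $d \equiv 2, 8, 10$ or $12 \mod 14$ and $d \notin \{8, 10, 16, 22\}$, and for these it suffices to show that a general member $V(H)$ of $(\mathfrak{d}_{\mathcal{K}})_{d}$ is integral.

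For $d \equiv 12 \mod 14$ (so $d \geq 12$) this is immediate: by Lemma \ref{singularities for invariants} a general member is reduced with finite singular locus and all of its singularities are cusps, so it is irreducible by the observation preceding Lemma \ref{irreducible components are nonsingular}. For $d \equiv 2, 8$ or $10 \mod 14$ in the range covered by Lemma \ref{singularities for invariants} --- namely $d \geq 30$, $d \geq 36$, $d \geq 24$ respectively, the finitely many smaller even values being exactly $\{16\}$, $\{8, 22\}$ and $\{10\}$ --- I argue by contradiction. Suppose a general $H$ factors as $H = H_{1}\cdots H_{n}$ into pairwise distinct integral curves with $n \geq 2$, and set $d_{i} = \deg H_{i}$ with $d_{1} \leq \cdots \leq d_{n}$. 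By Lemma \ref{irreducible components are nonsingular}, all singularities of $V(H)$ lie on the $G$-orbit $V(F_{\mathcal{K}}) \cap V(\Phi_{\mathcal{K}})$ of $\deg F_{\mathcal{K}} \cdot \deg \Phi_{\mathcal{K}} = 24$ points, they are all of one type, and every point where two distinct components meet has the same intersection multiplicity $m$, with $m = 1$ for a node ($d \equiv 10$), $m = 2$ for type $D_{5}$ ($d \equiv 2$) and $m = 3$ for type $A_{5}$ ($d \equiv 8$). This yields $d_{1}(d - d_{1}) \leq 24 m$, with equality only when $n = 2$; the refinement $d_{2}\bigl((d - d_{1}) - d_{2}\bigr) \leq 24 m - d_{1}(d - d_{1})$ when $n \geq 3$; and, since by B\'{e}zout $d_{1}(d - d_{1}) = \deg H_{1} \cdot \deg(H_{2}\cdots H_{n})$ while each point at which $V(H_{1})$ meets the remaining components contributes exactly $m$, the divisibility $m \mid d_{1}(d - d_{1})$.

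Inserting the value of $m$ into these constraints settles each residue class. When $d \equiv 2 \mod 14$ ($d$ even, $m = 2$) the divisibility forces $d_{1}$ even, hence $d_{1} \geq 2$ and $d_{1}(d - d_{1}) \geq 2(d - 2) > 48$ for all $d \geq 30$, contradicting $d_{1}(d - d_{1}) \leq 48$. When $d \equiv 10 \mod 14$ ($m = 1$) the bound together with $d_{1}(d - d_{1}) \geq d - 1$ forces $d \leq 25$, so $d = 24$; then $d_{1} = 1$ (as $d_{1} = 2$ already breaks $d_{1}(d - d_{1}) \leq 24$), $n = 2$ is impossible since $1 \cdot 23 \neq 24$, and for $n \geq 3$ the refinement reads $d_{2}(23 - d_{2}) \leq 1$, which fails for every $1 \leq d_{2} \leq 22$. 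When $d \equiv 8 \mod 14$ ($m = 3$) the bound $d_{1}(d - d_{1}) \leq 72$ leaves only $d \in \{36, 50, 64\}$: for $d = 36$ the divisibility $3 \mid d_{1}(36 - d_{1})$ forces $3 \mid d_{1}$, so $d_{1} \geq 3$ and $3 \cdot 33 > 72$; for $d = 50$ the divisibility excludes $d_{1} = 1$ while $d_{1} = 2$ breaks the bound; for $d = 64$ only $d_{1} = 1$ survives, $n = 2$ fails as $1 \cdot 63 \neq 72$, and for $n \geq 3$ the refinement reads $d_{2}(63 - d_{2}) \leq 9$, impossible for every $d_{2} \geq 1$. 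Thus $V(H)$ is integral in all the remaining cases. I expect the only genuine work to be the bookkeeping of these finitely many exceptional degrees in the classes $d \equiv 8$ and $d \equiv 10 \mod 14$; everything else follows from results already established, so there is no conceptually new obstacle.
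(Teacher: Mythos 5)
Your proposal is correct and takes essentially the same route as the paper: reduce to even $d$ with $d \equiv 2, 8, 10$ or $12 \bmod 14$, use the cusp case for $d \equiv 12$, and otherwise derive a contradiction from the bounds $(\ast)$, $(\ast\ast)$ of Lemma \ref{irreducible components are nonsingular} after bounding $d$. The only (harmless) difference is that your explicit divisibility $m \mid d_{1}(d-d_{1})$ streamlines the class $d \equiv 2 \bmod 14$ and the subcase $d = 36$, where the paper instead argues case by case on $d_{1}$, using the same B\'{e}zout/parity observation only implicitly.
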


\begin{proof}
There is a possibly reducible or nonreduced element of the linear system $(\mathfrak{d}_{\mathcal{K}})_{d}$ if and only if the degree $d$ is even and $d \geq 4$. By Theorem \ref{nonsingularity of K}, if $d \equiv 0, 4$ or $6 \mod 14$, then there is nonsingular curves of degree $d$ invariant under the Klein group $\mathcal{K}$. On the other hand, there is no integral elements of the linear system $(\mathfrak{d}_{\mathcal{K}})_{d}$ for $d = 8, 10, 16$ and $22$ by Lemma \ref{lowdeg of K}.

We show that there is an integral element of the linear system $(\mathfrak{d}_{\mathcal{K}})_{d}$ for given degree $d$. Take a general element $H$ of the linear system $(\mathfrak{d}_{\mathcal{K}})_{d}$ (where the curve $V(H)$ has the singularities only at the points in $V(F_{\mathcal{K}}) \cap V(\Phi_{\mathcal{K}})$).

First, if $d \equiv 12 \mod 14$, then any singularity of the curve $V(H)$ is a cusp by Lemma \ref{singularities for invariants}. In this case, $V(H)$ is integral.

We consider the case of other degrees. Assume that $V(H)$ is reducible, and derive a contradiction. By Lemma \ref{irreducible components are nonsingular} (1), we can write $H = H_{1}\cdots H_{n}$ where the curves $V(H_{i})$ are pairwise distinct integral curves for $n \geq 2$. Let $d_{i} = \deg H_{i}$. We may assume that $d_{1} \leq d_{2} \leq \cdots \leq d_{n}$. By Lemma \ref{irreducible components are nonsingular} (3), for the degree $d_{1}$, we obtain
\begin{equation}\label{iq of K}
d_{1}(d - d_{1}) \leq 24m \tag{$\ast$}
\end{equation}
where $m$ is an integer as in Lemma \ref{irreducible components are nonsingular} (2). We note that the left hand side is monotonically increasing for $1 \leq d_{1} \leq \dfrac{d}{2}$, in particular, and it is greater than or equal to $d - 1$.

Suppose that $d \equiv 2 \mod 14$. Then $m = 2$ by Lemma \ref{singularities for invariants}. For $d \geq 2 + 4 \cdot 14 = 58$, the inequality (\ref{iq of K}) does not hold. Thus $d = 30$ or $d = 44$. For $2 \leq d_{1} \leq \dfrac{d}{2}$, the inequality (\ref{iq of K}) is not satisfied. Let $d_{1} = 1$. Since all intersections of the line $V(H_{1})$ and the curve $V(H_{2} \cdots V_{n})$ is of type $D_{5}$, by B\'{e}zout's theorem, we see
\[
1 \cdot (d - 1) = 2 \cdot \# \left( V(H_{1}) \cap V(H_{2} \cdots H_{n}) \right).
\]
The left hand side is odd, while the right hand side is even. This is a contradiction.

Suppose that $d \equiv 8 \mod 14$. Then $m = 3$ by Lemma \ref{singularities for invariants}. For $d \geq 8 + 5 \cdot 14 = 78$, the inequality (\ref{iq of K}) does not hold. Take  $d = 50$ or $d = 64$. For $2 \leq d_{1} \leq d - 2$, the inequality (\ref{iq of K}) is not satisfied. Let $d_{1} = 1$. If $n = 2$ (i.e. $d_{2} = d - 1$), both sides of (\ref{iq of K}) are equal; however, $1(d - 1) \neq 24 \cdot 3$. If $n > 2$, by Lemma \ref{irreducible components are nonsingular} (3) (\ref{second inequality}), we have
\[
d - 2 \leq d_{2}((d - 1) - d_{2}) \leq (3 \cdot 24) - (d - 1) = 73 - d
\]
since $d_{2} \neq d - 1$. For $d = 50$ and $64$, this inequality does not hold. Take $d = 36$. For $3 \leq d_{1} \leq 18$, the inequality (\ref{iq of K}) is not satisfied. Let $d_{1} = 2$. If $n = 2$, then both sides of (\ref{iq of K}) are equal; however, $2 \cdot 34 \neq 24 \cdot 3$. If $n > 2$, by Lemma \ref{irreducible components are nonsingular} (3) (\ref{second inequality}), we have
\[
33 \leq d_{2} (34 - d_{2}) \leq  (3 \cdot 24) -  (2 \cdot 34) = 4
\]
since $d_{2} \neq 34$. This is a contradiction. Let $d_{1} = 1$. If $n = 2$, then both sides of (\ref{iq of K}) are equal; however, $1 \cdot 35 \neq 3 \cdot 24$. If $n > 2$, in the same way, we have
\[
d_{2} (35 - d_{2}) \leq  (3 \cdot 24) - (1 \cdot 35) = 37.
\]
The left hand side is monotonically increasing for $1 \leq d_{2} \leq \dfrac{35}{2}$ and the inequality is not satisfied for $2 \leq d_{2} \leq \dfrac{35}{2}$. Let $d_{2} = 1$. Then any intersection of the two lines $V(H_{1})$ and $V(H_{2})$ is a node, while any singularity is of type $A_{5}$. This is a contradiction.

Suppose that $d \equiv 10 \mod 14$. Then $m = 1$ by Lemma \ref{singularities for invariants}. For $d \geq 4 + 3 \cdot 10 = 34$, the inequality (\ref{iq of K}) does not hold. Take $d = 24$. For $2 \leq d_{1} \leq 12$, the inequality (\ref{iq of K}) is not satisfied. Let $d_{1} = 1$. If $n = 2$, then both sides of (\ref{iq of K}) are equal; however, $1 \cdot 23 \neq 24 \cdot 1$. If $n > 2$, by Lemma \ref{irreducible components are nonsingular} (3) (\ref{second inequality}), we have
\[
22 \leq d_{2} (23 - d_{2}) \leq (1 \cdot 24) - (1 \cdot 23) = 1
\]
since $d_{2} \neq 23$. This is a contradiction.
\end{proof}

\appendix
\section{Appendix}

In this section, we explain the SINGULAR code used in the proof of Lemma \ref{basic prop of invariants}. We recall the claims which we need to check:
\begin{enumerate}
\item[(4)] The radical of the ideal generated by $F_{G}$, $\Phi_{G}$ and $\Psi_{G}$ is the maximal ideal $\left< x, y, z\right>$.
\item[(5)] For $f = F_{G}$, $\Phi_{G}$ or $\Psi_{G}$, the radical of the ideal generated by the derivatives of $f$ is the maximal ideal $\left< x, y, z\right>$.
\item[(6)] The ideal generated by $F_{G}$ and $\Phi_{G}$ is equal to its radical.
\end{enumerate}
In addition, for $G = \mathcal{I}$, we will look at the following claim.
\begin{enumerate}
\item[(1)] The Jacobian of $(F_{\mathcal{I}}, \Phi_{\mathcal{I}}, \Psi_{\mathcal{I}})$ is nonzero.
\end{enumerate}

\subsection{The Valentiner group}

We check the claims in almost the same way for all groups. In this subsection, we take $G = \mathcal{V}$ as an example. First, we note the following.

\begin{remark}
It is known that the curve defined by
\[
W(x, y, z) := 10 x^{3} y^{3} + 9 x^{5} z + 9 y^{5} z - 45 x^{2} y^{2} z^{2} -135 xyz^{4} + 27 z^{6}
\]
is invariant under the alternative group $\mathfrak{A}_{6}$ in a suitable coordinate system (\cite{Wiman}). This curve is called the \textit{Wiman curve}.

We define the matrix
\[
R :=
\left(
\begin{array}{ccc}
\alpha (1 - \tau) & \alpha (1 - \tau) & \dfrac{1}{2} + \rho + \dfrac{1}{2}\rho\tau \\
&&\\
\alpha \sqrt{\tau - 3} & - \alpha \sqrt{\tau - 3} & 0 \\
&&\\
\alpha & \alpha & -\dfrac{1}{2} - \dfrac{1}{2}\rho + \dfrac{1}{2}\tau + \rho\tau \\
\end{array}
\right)
\]
where $\rho = e^{\frac{2\pi}{3} i}= \dfrac{-1+\sqrt{-3}}{2}$, $\tau = \dfrac{1 + \sqrt{5}}{2}$ and $\alpha = \dfrac{1}{6} + \dfrac{1}{3}\rho + \dfrac{1}{3}\tau + \dfrac{1}{6}\rho\tau$. Then we can see that the homogeneous polynomial $F^{R}$ is a constant multiple of $W$. We call the coordinates $(x' : y' : z') = [R]^{-1} \cdot (x : y : z)$ the \textit{Wiman coordinates}.

We denote the conjugate $[R]^{-1} \mathcal{V} [R]$ by $\mathcal{V}'$. We define
\[
F_{\mathcal{V}'}(x, y, z) := W(x, y, z).
\]
In the Wiman coordinates, the homogeneous polynomials $\Phi_{\mathcal{V}}$ and $\Psi_{\mathcal{V}}$ are replaced by $\Phi_{\mathcal{V}'}$ and $\Psi_{\mathcal{V}'}$ where we define
\[
\Phi_{\mathcal{V}'}(x, y, z) := \det H(F_{\mathcal{V}'})(x, y, z)
\]
and 
\[
\Psi_{\mathcal{V}'}(x, y, z) := \det BH(F_{\mathcal{V}'}, \Phi_{\mathcal{V}'}) (x, y, z).
\]

Since the homogeneous polynomials $F_{\mathcal{V}'}$, $\Phi_{\mathcal{V}'}$ and $\Psi_{\mathcal{V}'}$ only have integer coefficients, the calculations for the claims is more easily and quickly done in the Wiman coordinates than in the former coordinates.
\end{remark}

Put \texttt{F} $= F_{\mathcal{V}'}(x, y, z)$ and let \texttt{JF} be the ideal generated by its derivatives. Next, set \texttt{Phi} $= \Phi_{\mathcal{V}'}(x, y, z)$(i.e., the determinant of the Hessian of \texttt{F}). Then we define \texttt{JPhi} to be the ideal generated by the derivatives of \texttt{Phi}. Let \texttt{BH} be the border Hesse matrix of \texttt{F} and \texttt{Phi}. Then we define \texttt{Psi} $= \Psi_{\mathcal{V}'}(x, y, z)$ (i.e. the determinant of \texttt{BH}) and \texttt{JPsi} the ideal generated by the derivatives of \texttt{Psi}.

\begin{lstlisting}
LIB "primdec.lib";
ring R = 0,(x,y,z),dp;

poly F = 10x3y3+9x5z+9y5z-45x2y2z2-135xyz4+27z6;
ideal JF = jacob(F);

poly Phi = det(jacob(jacob(F)));
ideal JPhi = jacob(Phi);

matrix HF = jacob(jacob(F));
matrix BH[4][4] = HF[1,1],HF[1,2],HF[1,3],JPhi[1],
                  HF[2,1],HF[2,2],HF[2,3],JPhi[2],
                  HF[3,1],HF[3,2],HF[3,3],JPhi[3],
                  JPhi[1],JPhi[2],JPhi[3],0;
poly Psi = det(BH);
ideal JPsi = jacob(Psi);
\end{lstlisting}

For the claim (4) and (5), we calculate the radicals of the ideal generated by \texttt{F}, \texttt{Phi} and \texttt{Psi} and the ideal generated by the derivatives of each of the homogeneous polynomials.
\begin{lstlisting}
//(4)
radical(ideal(F,Phi,Psi));

//(5)
radical(JF);
radical(JPhi);
radical(JPsi);
\end{lstlisting}
Each of these lines returns the following answer:
\begin{lstlisting}
_[1]=z
_[2]=y
_[3]=x
\end{lstlisting}
Thus each ideal is generated by $x$, $y$ and $z$.

Finally, we consider the claim (6). We check the (standard basis of) ideal generated by \texttt{F} and \texttt{Phi} and its radical.
\begin{lstlisting}
//(6)
std(ideal(F,Phi));
std(radical(ideal(F,Phi)));
\end{lstlisting}
Each of these lines returns the same $5$ generators.

\subsection{The icosahedral group}

In this subsection, we give the code for $G = \mathcal{I}$. We again use the Wiman coordinates in the calculation.

\begin{remark}
We define $\mathcal{I}' := [R]^{-1} \mathcal{I} [R]$. In the Wiman coordinates, $F_{\mathcal{I}}$ is transformed to a constant multiple of
\[
F_{\mathcal{I}'}(x, y, z) := xy + \eta z^{2}
\]
where $\eta = \dfrac{-3 + 3 \sqrt{-15}}{8}$. Since $\Phi_{\mathcal{I}} = F_{\mathcal{V}}$, we define $\Phi_{\mathcal{I}'} := W(x, y, z)$. In addition, the homogeneous polynomial $\Psi_{\mathcal{I}}$ is replaced by
\[
\Psi_{\mathcal{I}'} := \det BH(F_{\mathcal{I}'}, \Phi_{\mathcal{I}'})(x, y, z).
\]

As in the case of the Valentiner group, the calculation is more easily and quickly finished in the Wiman coordinates than in the former coordinates.
\end{remark}

We consider the ring generated by the variables $x$, $y$, $z$ and $e$. Set \texttt{g} the polynomial defined by $4e^{2} + 3 e + 9$. Note that one of the roots of \texttt{g} is $\eta$. We replace the constant $\eta$ by the variable $e$, and take the reduction modulo \texttt{g} from time to time.

The code is almost the same as the one for $G = \mathcal{V}$. We first define \texttt{F}, \texttt{Phi} and \texttt{Psi} and their Jacobian ideals.
\begin{lstlisting}
LIB "primdec.lib";
ring R = 0,(x,y,z,e),dp;
poly g = 4e2 + 3e + 9;

poly F = xy + ez2;
ideal JF = jacob(F);

poly Phi = 10x3y3+9x5z+9y5z-45x2y2z2-135xyz4+27z6;
ideal JPhi = jacob(Phi);

matrix HF = jacob(jacob(F));
matrix BH[4][4] = HF[1,1],HF[1,2],HF[1,3],JPhi[1],
                  HF[2,1],HF[2,2],HF[2,3],JPhi[2],
                  HF[3,1],HF[3,2],HF[3,3],JPhi[3],
                  JPhi[1],JPhi[2],JPhi[3],0;
poly Psi = det(BH);
ideal JPsi = jacob(Psi);
\end{lstlisting}

We check the claim (1). We define \texttt{X} to be the Jacobian of (\texttt{F}, \texttt{Phi}, \texttt{Psi}). Let \texttt{f} be the homomorphism $\mathbb{C}[x, y, z, e] \rightarrow \mathbb{C}[x, y, z, e]$ which sends a polynomial $P(x, y, z, n)$ to $P(1, 0, 0, e)$. Sends \texttt{X} with \texttt{f} and reduce the resulting polynomial by \texttt{g}.
\begin{lstlisting}
matrix J[3][3] = JF[1],JF[2],JF[3],
                 JPhi[1],JPhi[2],JPhi[3],
                 JPsi[1],JPsi[2],JPsi[3];
poly X = det(J);
map f = R,1,0,0,e;
reduce (f(X),g);
\end{lstlisting}
This return value is the nonzero constant $7290$. Thus we obtain that the Jacobian is nonzero at $(x, y, z) = (1, 0, 0)$.

For the claims (4) to (6), we may use almost the same code as in Subsection A.1 except that we have to  add \texttt{g} to the ideals.

For the claim (4) and (5), we use the following code.
\begin{lstlisting}
//(4)
radical(ideal(F,Phi,Psi) + ideal(g));

//(5)
radical(ideal(JF[1],JF[2],JF[3]) + ideal(g));
radical(ideal(JPhi[1],JPhi[2],JPhi[3]) + ideal(g));
radical(ideal(JPsi[1],JPsi[2],JPsi[3]) + ideal(g));
\end{lstlisting}
Each of these lines returns the following answer:
\begin{lstlisting}
_[1]=z
_[2]=y
_[3]=x
_[4]=4e2+3e+9
\end{lstlisting}
Since the forth generator is equal to \texttt{g}, each ideal is generated by $x$, $y$ and $z$.

We consider the claim (6). We use the following code.
\begin{lstlisting}
//(6)
std(ideal(F,Phi) + ideal(g));
std(radical(ideal(F,Phi) + ideal(g)));
\end{lstlisting}
Each of these lines returns the $9$ generators including \texttt{g}.

\subsection{The Klein group}

In this subsection, we check the claims for $G = \mathcal{K}$. The code is the same except for the definition of \texttt{F},  \texttt{Phi} and \texttt{Psi}. Note that \texttt{Phi} is $-54 \Phi_{\mathcal{K}}$ and \texttt{Psi} is $-26244 \Psi_{\mathcal{K}}$.
\begin{lstlisting}
LIB "primdec.lib";
ring R = 0,(x,y,z),dp;

poly F = x3y + y3z + z3x;
ideal JF = jacob(F);

poly Phi = det(jacob(jacob(F)));
ideal JPhi = jacob(Phi);

matrix HF = jacob(jacob(F));
matrix BH[4][4] = HF[1,1],HF[1,2],HF[1,3],JPhi[1],
                  HF[2,1],HF[2,2],HF[2,3],JPhi[2],
                  HF[3,1],HF[3,2],HF[3,3],JPhi[3],
                  JPhi[1],JPhi[2],JPhi[3],0;
poly Psi = det(BH);
ideal JPsi = jacob(Psi);
\end{lstlisting}

We check the claims (4) to (6). This is done with the same code as in Subsection A.1.
\begin{lstlisting}
//(4)
radical(ideal(F,Phi,Psi));

//(5)
radical(JF);
radical(JPhi);
radical(JPsi);

//(6)
std(ideal(F,Phi));
std(radical(ideal(F,Phi)));
\end{lstlisting}
For the claims (4) and (5), we also get the same answers for each of these lines and can see that each ideal is generated by $x$, $y$ and $z$. For the claim (6), for the ideal generated by \texttt{F} and \texttt{Phi} and its radical, we obtain the same $4$ generators.

\section*{Acknowledgements}

I would like to thank Associate Professor Nobuyoshi Takahashi for detailed advice in this paper.

\end{document}